\newtheorem{theorem}{Theorem}[section]
\newtheorem{proposition}[theorem]{Proposition}
\newtheorem{lemma}[theorem]{Lemma}
\newtheorem{remark}[theorem]{Remark}
\newtheorem{remarks}[theorem]{Remarks}
\newcommand{\dom}{\mathbf{d}}
\newcommand{\ran}{\mathbf{r}}
\newcommand{\s}{\mathbf{d}}
\newcommand{\f}{\mathbf{r}}
\newcommand{\G}{{\bf G}}
\newcommand{\Le}{{\bf L}}
\newcommand{\R}{{\bf R}}
\newcommand{\D}{\mathcal{D}}
\title[Chapter 2]{Primer on inverse semigroups II}
\author{Mark~V.~Lawson}
\address{Department of Mathematics and the
Maxwell Institute for Mathematical Sciences\\
Heriot-Watt University\\
Riccarton\\
Edinburgh~EH14~4AS\\
Scotland\\
\texttt{M.V.Lawson@hw.ac.uk}} 
\begin{document}
\maketitle

\section{Introduction}\setcounter{theorem}{0}

We saw in Chapter~1 that underlying every inverse semigroup is a groupoid, 
but this groupoid is not enough on its own to recover the original inverse semigroup multiplication. 
This raises the question of what else is needed, and we answer this question in Section~2.
The underlying groupoid of an inverse semigroup combined with the natural partial order lead to a structure called an inductive groupoid
and inverse semigroups and inductive groupoids are two ways of looking at the same thing.
In fact, Ehresmann worked with inductive groupoids rather than inverse semigroups.

Our second categorical description takes its cue from how substructures are represented in a category.
This leads to left (or right) cancellative categories and their actions on principal groupoids 
as a way of constructing arbitrary inverse semigroups.
This is described in Section~4.

Section~3 forms a bridge between Sections~2 and 4.
In it, we describe the extent to which ordered groupoids are related to left cancellative categories.
In the case of inverse monoids, this leads to a complete description in terms of categories.

\section{Ordered groupoids}\setcounter{theorem}{0}

The motivation for this construction is described in Section~2.1 and the main theorem,
the Ehresmann-Schein-Nambooripad theorem, is proved in Section~2.3.

\subsection{Motivation}

The following result shows how the usual product in an inverse semigroup can be reconstructed from the restricted product and the natural partial order.

\begin{lemma} Let $S$ be an inverse semigroup.
\begin{enumerate}

\item Let $s \in S$ and $e$ an idempotent 
such that $e \leq  s^{-1}s$.  
Then $a = se$ is the unique element in $S$ such that 
$a \leq  s$ and $a^{-1}a = e$. 

\item Let $s \in S$ and $e$ an idempotent 
such that $e \leq  ss^{-1}$.  
Then $a = es$ is the unique element in $S$ such that $a \leq  s$ 
and $aa^{-1} = e$. 

\item Let $s,t \in  S$. 
Then  $st = s' \cdot t'$  where $s' = se$, $t' = et$
and $e = s^{-1}stt^{-1}$.

\end{enumerate}
\end{lemma}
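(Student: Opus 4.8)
The plan is to prove the three parts in order, using (1) and (2) as the engine for (3). Throughout I would lean on two standard facts about an inverse semigroup $S$: its idempotents commute, and the natural partial order admits the characterizations $a \leq b$ if and only if $a = b(a^{-1}a)$, if and only if $a = (aa^{-1})b$. I would establish these characterizations first, since they are exactly what makes the uniqueness claims clean: each follows by starting from the definition $a = eb$ for some idempotent $e$ and then pushing idempotents past one another (for instance $aa^{-1} = ebb^{-1}e = ebb^{-1}$, whence $(aa^{-1})b = ebb^{-1}b = eb = a$).

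For (1), existence is almost immediate. The element $a = se$ satisfies $a \leq s$ because $e$ is idempotent, and
\[
a^{-1}a = e s^{-1} s e = e(s^{-1}s)e = e,
\]
where the final equality uses $e \leq s^{-1}s$, so that $e$ absorbs $s^{-1}s$ on either side. For uniqueness, suppose $b \leq s$ with $b^{-1}b = e$. Then the characterization $b = s(b^{-1}b)$ gives $b = se = a$ at once; this is precisely the payoff of having the order-theoretic description of $\leq$ ready to hand.

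Part (2) is the left--right dual of (1), obtained either by running the symmetric argument with left and right interchanged, or formally by applying (1) to $s^{-1}$ and inverting (using that $x \mapsto x^{-1}$ reverses the natural partial order and sends $s^{-1}s$ to $ss^{-1}$); no new idea is needed. For (3) I would first record that $e = s^{-1}s\,tt^{-1}$ is idempotent, being a product of two commuting idempotents, and that $e \leq s^{-1}s$ and $e \leq tt^{-1}$ (for example $e \cdot s^{-1}s = s^{-1}s\,tt^{-1}\,s^{-1}s = e$ after commuting). These two inequalities are exactly the hypotheses of (1) and (2): with $s' = se$ and $t' = et$, part (1) yields $(s')^{-1}s' = e$ and part (2) yields $t'(t')^{-1} = e$. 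Hence the domain idempotent of $s'$ equals the range idempotent of $t'$, so the restricted product $s' \cdot t'$ is defined, and since $ee = e$,
\[
s' \cdot t' = (se)(et) = set = s(s^{-1}s\,tt^{-1})t = (ss^{-1}s)(tt^{-1}t) = st.
\]

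The only step demanding genuine care is checking that the restricted product $s' \cdot t'$ is actually defined, that is, that $(s')^{-1}s' = t'(t')^{-1}$; this is where parts (1) and (2) earn their keep, and it rests entirely on the two order inequalities $e \leq s^{-1}s$ and $e \leq tt^{-1}$. Everything else is bookkeeping with commuting idempotents, so I expect no serious obstacle once the partial-order characterizations are in place.
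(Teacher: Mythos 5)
Your proposal is correct and follows essentially the same route as the paper: existence in (1) by the computation $a^{-1}a = es^{-1}se = e$, uniqueness via $b = s(b^{-1}b) = se$, part (2) by the left--right symmetric argument, and part (3) by using (1) and (2) to get $\dom(s') = e = \ran(t')$, so that the restricted product exists and equals $set = st$. One small slip worth noting: inversion \emph{preserves} rather than reverses the natural partial order ($a \le b$ if and only if $a^{-1} \le b^{-1}$), and this preservation is in fact exactly what your alternative derivation of (2) from (1) requires, so the argument still goes through.
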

\begin{proof}
(1) From the definition of the natural partial order
we have that $a \leq s$. 
Also, $a^{-1}a = (se)^{-1}se = es^{-1}se = e$.
Now let $b \leq s$ be such that $b^{-1}b = e$.
Then $b = sb^{-1}b$, so that $b = se = a$.

(2) Similar to (1). 

(3) Put $s' = se$ and $t' = et$ where  $e = s^{-1}stt^{-1}$. 
Then $s' \leq s$ and $t' \leq t$. 
It is easy to check that
${\bf d}(s') = e$ and ${\bf r}(t') = e$.
Thus $s' \cdot t'$ exists. 
But $s'\cdot t' = set = st$.
\end{proof}

A function $\theta \colon \: S \rightarrow  T$ between inverse 
semigroups is said to be a 
{\em prehomomorphism} 
if $\theta (st) \leq  \theta (s)\theta (t)$ 
for all $s,t \in  S$.\footnote{The function is called a 
{\em dual prehomomorphism} 
if
$\theta (s)\theta (t) \leq \theta (st)$.}
Inverse semigroups and prehomomorphisms form a category
that contains the usual category of inverse semigroups and homomorphisms.
We can easily construct examples of prehomomorphisms 
which are not homomorphisms.
Let $L$ and $M$ be meet semilattices and 
let $\theta \colon \: L \rightarrow  M$ be an order-preserving function.  
Let $e,f \in L$.
Then $e \wedge f \leq e,f$ and so 
$\theta (e \wedge f) \leq \theta (e),\theta (f)$ since
$\theta$ is order-preserving.
Thus 
$$\theta (e \wedge f) \leq \theta (e) \wedge \theta (f)$$
since $M$ is a meet semilattice.
It follows that $\theta$ is a prehomomorphism
from the inverse semigroup $(L,\wedge)$ to the 
inverse semigroup $(M,\wedge)$,
but not in general a homomorphism. 

\begin{lemma} Let $\theta \colon \: S \rightarrow T$ be a function between inverse semigroups.
\begin{enumerate}

\item  $\theta$ is a prehomomorphism if, and only if, it preserves the restricted product and  the natural partial order. 

\item  $\theta$ is a homomorphism  if, and only if, it is a prehomomorphism which satisfies $\theta (ef) = \theta (e)\theta (f)$
for all idempotents $e,f \in  S$.

\end{enumerate}
\end{lemma}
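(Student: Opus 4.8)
The plan is to extract the elementary consequences of the defining inequality once and for all, and then feed them into both parts. Throughout I will lean on three standard facts about the natural partial order (from Chapter~1): it is compatible with multiplication, so $a \leq b$ and $c \leq d$ give $ac \leq bd$; multiplying by an idempotent lowers an element, $ix \leq x$ and $xi \leq x$; and $a \leq b$ forces $a = aa^{-1}b = b\,a^{-1}a$. The first thing I would prove, for any prehomomorphism $\theta$, is that it preserves inverses. Applying the inequality twice to $s = ss^{-1}s$ gives $\theta(s) \leq \theta(s)\theta(s^{-1})\theta(s)$, and the identity $a = aa^{-1}b$ collapses this to $\theta(s) = \theta(s)\theta(s^{-1})\theta(s)$; the symmetric statement for $s^{-1}$ shows $\theta(s^{-1})$ satisfies both inverse equations for $\theta(s)$, so by uniqueness of inverses $\theta(s^{-1}) = \theta(s)^{-1}$.

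For Part~(1), forward, assume $\theta$ is a prehomomorphism. The crucial observation is that on a composable pair, $\mathbf{d}(s) = \mathbf{r}(t)$, the inequality reverses: from $s = (st)t^{-1}$ I get $\theta(s) \leq \theta(st)\theta(t)^{-1}$, and multiplying on the right by $\theta(t)$ and absorbing the idempotent $\theta(t)^{-1}\theta(t)$ yields $\theta(s)\theta(t) \leq \theta(st)$; with the defining inequality this gives $\theta(st) = \theta(s)\theta(t)$ for every composable pair. Feeding the composable pairs $(s^{-1},s)$ and $(s,s^{-1})$ into this equality and using inverse-preservation shows $\theta(\mathbf{d}(s)) = \mathbf{d}(\theta(s))$ and $\theta(\mathbf{r}(s)) = \mathbf{r}(\theta(s))$. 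Hence if $\mathbf{d}(s) = \mathbf{r}(t)$ then $\mathbf{d}(\theta(s)) = \theta(\mathbf{d}(s)) = \theta(\mathbf{r}(t)) = \mathbf{r}(\theta(t))$, so $\theta(s)\cdot\theta(t)$ is defined and equals $\theta(s\cdot t)$: the restricted product is preserved. For the order, if $a \leq b$ write $a = b\,\mathbf{d}(a)$ and compute $\theta(a) \leq \theta(b)\theta(\mathbf{d}(a)) = \theta(b)\,\mathbf{d}(\theta(a)) \leq \theta(b)$. For the backward direction I would use the decomposition $st = s'\cdot t'$ with $s' = se \leq s$ and $t' = et \leq t$ from the first lemma: then $\theta(st) = \theta(s')\theta(t') \leq \theta(s)\theta(t)$.

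Part~(2) forward is immediate, since a homomorphism is in particular a prehomomorphism and is multiplicative on idempotents. For the converse, assume $\theta$ is a prehomomorphism with $\theta(ef) = \theta(e)\theta(f)$ on idempotents; by Part~(1) every fact above is available. I would first record the restriction identity $\theta(se) = \theta(s)\theta(e)$ for any idempotent $e \leq \mathbf{d}(s)$: since $\mathbf{d}(se) = e$ we have $\mathbf{d}(\theta(se)) = \theta(e)$, so the inequality $\theta(se) \leq \theta(s)\theta(e)$ together with $a = b\,\mathbf{d}(a)$ gives $\theta(se) = \theta(s)\theta(e)\theta(e) = \theta(s)\theta(e)$, and dually $\theta(et) = \theta(e)\theta(t)$ for $e \leq \mathbf{r}(t)$. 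The decisive step is to push the idempotent $e = s^{-1}stt^{-1} = \mathbf{d}(s)\mathbf{r}(t)$ into the centre of the product: idempotent multiplicativity gives $\mathbf{d}(\theta(s))\mathbf{r}(\theta(t)) = \theta(\mathbf{d}(s))\theta(\mathbf{r}(t)) = \theta(e)$, whence $\theta(s)\theta(t) = \theta(s)\theta(e)\theta(t)$. Inserting $\theta(e) = \theta(e)\theta(e)$ and applying the restriction identity on each side then yields $\theta(s)\theta(t) = \theta(se)\theta(et) = \theta(s')\theta(t') = \theta(s'\cdot t') = \theta(st)$, the last equality by preservation of the restricted product.

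I expect the genuine obstacle to be precisely this middle manoeuvre in Part~(2): the hypothesis only controls products of idempotents, so all the substance lies in manufacturing the identity $\theta(s)\theta(t) = \theta(s)\theta(e)\theta(t)$, which is exactly where the domain/range preservation from Part~(1) combines with idempotent multiplicativity to upgrade the prehomomorphism inequality to an equality. By contrast the restriction identity $\theta(se) = \theta(s)\theta(e)$, though it looks like the hard part, needs only the facts from Part~(1) and not the idempotent hypothesis at all.
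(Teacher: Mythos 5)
Your proposal is correct, but it departs from the paper's proof at both of the key technical points, and the differences are worth recording. For part (1), the paper never derives your reverse inequality $\theta(s)\theta(t) \leq \theta(st)$ on composable pairs; instead it first proves $\theta(ss^{-1}) = \theta(s)\theta(s)^{-1}$ by a separate argument, and then obtains preservation of the restricted product by showing that $\theta(s\cdot t) \leq \theta(s)\cdot\theta(t)$ and that both sides have the same domain $\theta(t^{-1}t)$, equality following from the uniqueness clause of the natural partial order (two elements below a common element with equal domains coincide). Your antisymmetry trick --- writing $s = (st)t^{-1}$ to reverse the defining inequality --- yields multiplicativity on all composable pairs in one stroke and then gives domain/range preservation as a corollary, whereas the paper needs domain/range preservation first in order even to compare the two sides. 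In part (2) the divergence is sharper: the paper proves the restriction identity $\theta(se) = \theta(s)\theta(e)$ by the same domain-comparison method, and in computing $\mathbf{d}(\theta(s)\theta(e))$ it explicitly invokes the idempotent hypothesis $\theta(e) = \theta(s^{-1}s)\theta(tt^{-1})$; your proof establishes that identity for an arbitrary prehomomorphism, using only $a \leq b \Rightarrow a = b\,\mathbf{d}(a)$ and domain preservation, and confines the hypothesis to the single step $\theta(s)\theta(t) = \theta(s)\theta(e)\theta(t)$. Your version is the sharper bookkeeping --- it isolates exactly where multiplicativity on idempotents enters, and it incidentally shows the paper's appeal to the hypothesis inside the restriction identity is avoidable, since order preservation already gives $\theta(e) \leq \theta(s^{-1}s)$. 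What the paper's route buys in exchange is that its domain-comparison argument is precisely the uniqueness mechanism (the content of axiom (OG3)) that the ordered-groupoid theory of Section~2.2 runs on, so its proof rehearses the machinery used later. The two converse directions coincide: both rest on the decomposition $st = (se)\cdot(et)$ with $e = s^{-1}stt^{-1}$ from Lemma~2.1(3).
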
 
\begin{proof}
(1) Let $\theta \colon \: S \rightarrow T$ be a prehomomorphism.
We first prove that 
$\theta (s^{-1}) = \theta (s)^{-1}$ for each $s \in S$. 
By definition 
$$\theta (s) 
= \theta (s(s^{-1}s)) \leq  \theta (s)\theta (s^{-1}s) 
\leq  \theta (s)\theta (s^{-1})\theta (s).$$ 
Similarly,
$$\theta (s^{-1}) \leq  \theta (s^{-1})\theta (s)\theta (s^{-1}).$$
Put $a = \theta (s)$ and $b = \theta (s^{-1})$.  
Then $a \leq  aba$ and $b \leq  bab$.  
Now $ab \leq  abab$, so that
$$ab = (ab)^{2}(ab)^{-1}(ab) = (ab)^{2}.$$
Similarly, $ba = (ba)^{2}$.  
Thus $a(ba) \leq  a \leq  aba$, and so $a = aba$.  
Similarly, $b = bab$.  
Hence $\theta (s^{-1}) = \theta (s)^{-1}$. 

Next we show that if $e$ is an idempotent then $\theta (e)$ is an idempotent. 
Let $e$ be an idempotent.  
Then $\theta (e) = \theta (e^{-1}) = \theta (e)^{-1}$ by the result above.  
Thus
$$\theta (e) = \theta (ee) \leq \theta (e)\theta (e) 
= \theta (e)\theta (e^{-1}e) \leq  \theta (e)\theta (e)^{-1}\theta (e) 
= \theta (e),$$
and so $\theta (e) = \theta (e)\theta (e)$. 

We can now prove that $\theta$ is order-preserving.
Let $s \leq  t$.  
Then $s = te$ for some idempotent $e$.  
Thus
$$\theta (s) = \theta (te) \leq  \theta (t)\theta (e) \leq  \theta (t)$$
since $\theta (e)$ is an idempotent. 

A key ingredient in proving that $\theta$ preserves restricted
products is the following:
$$\theta (ss^{-1}) = \theta (s) \theta (s)^{-1} 
\mbox{ and }
\theta (s^{-1}s) = \theta (s)^{-1} \theta (s)$$ 
for every $s \in S$.
We show that $\theta (ss^{-1}) = \theta (s)\theta (s)^{-1}$;  
the proof of the other case is similar.
Clearly, $\theta (ss^{-1}) \leq  \theta (s)\theta (s)^{-1}$,  
so that
$$\theta (ss^{-1}) \theta (s) \leq  \theta (s)\theta (s)^{-1}\theta (s) 
= \theta (s).$$
But then
$$\theta (s) = \theta ((ss^{-1})s) \leq  \theta (ss^{-1})\theta (s) 
\leq  \theta (s).$$
Thus $\theta (s) = \theta (ss^{-1})\theta (s)$, and so 
$$\theta (s)\theta (s)^{-1} = \theta (ss^{-1})(\theta (s)\theta (s)^{-1}).$$
Hence $\theta (s)\theta (s)^{-1} \leq  \theta (ss^{-1})$.  
But $\theta (ss^{-1}) \leq  \theta (s)\theta (s)^{-1}$.  
It follows that $\theta (s)\theta (s)^{-1} = \theta (ss^{-1})$. 

We can now prove that $\theta$ preserves restricted products.
Suppose that $s \cdot t$ is defined. 
Then by the result above so too is $\theta (s) \cdot \theta (t)$.
It remains to show that $\theta (s \cdot t) = \theta (s) \cdot \theta (t)$.  
Clearly $\theta (s \cdot t) \leq  \theta (s) \cdot \theta (t)$.  
Now
$$\theta (s \cdot t)^{-1} \theta (s \cdot t) 
= \theta ((s \cdot t)^{-1}(s \cdot t)) = \theta (t^{-1}s^{-1}st) 
= \theta (t^{-1}t),$$
and
$$[\theta (s)\cdot \theta (t)]^{-1}[\theta (s) \cdot \theta (t)] 
= \theta (t)^{-1}\theta (t) = \theta (t^{-1}t).$$
Hence $\theta (s \cdot t) = \theta (s) \cdot \theta (t)$ as required. 

To prove the converse, suppose that $\theta$ preserves the 
restricted product and the natural partial order.
We show that it is a prehomomorphism.
Let $st$ be a full product in $S$.  
Then $st = (se)\cdot (et)$ where $e = s^{-1}stt^{-1}$ by Lemma~2.1.  
Thus, by assumption, $\theta (st) = \theta (se)\cdot \theta (et)$.  
But $se \leq  s$ and $et \leq  t$ so that 
$\theta (se) \leq  \theta (s)$ and 
$\theta (et) \leq  \theta (t)$. 
Hence $\theta (st) \leq  \theta (s)\theta (t)$ as required. 

(2) We now prove that 
if $\theta \colon \: S \rightarrow  T$ 
is a prehomomorphism satisfying
$\theta (ef) = \theta (e)\theta (f)$ 
for all idempotents $e,f \in S$, then $\theta $ is a homomorphism.  
Let $st$ be a full product in $S$.  
Then $st = (se)\cdot (et)$ where $e = s^{-1}stt^{-1}$.  
Thus $\theta (st) = \theta (se)\cdot \theta (et)$.  
We show that $\theta (se) = \theta (s)\theta (e)$.  
Clearly, $\theta (se) \leq  \theta (s)\theta (e)$.  
Now
$$\theta (se)^{-1}\theta (se) = \theta ((se)^{-1}(se)) 
= \theta (es^{-1}se) = \theta (e),$$
and $$[\theta (s) \theta (e)]^{-1} \theta (s) \theta (e) 
= \theta (e)^{-1} \theta (s)^{-1} \theta (s) \theta (e) 
= \theta (e)^{-1} \theta (s^{-1}s)\theta (e)  
= \theta (e)$$
since $\theta (e) = \theta (s^{-1}s) \theta (tt^{-1})$ by assumption.
Thus $\theta (se) = \theta (s)\theta (e)$.
Similarly, $\theta (et) = \theta (e)\theta (t)$.
It now follows that
$$\theta (st) = \theta (s)\theta(e)\theta(e)\theta(t)
= \theta (s)\theta (s^{-1}s)\theta (tt^{-1})\theta (t)
= \theta (s)\theta(t).$$
\end{proof}

Lemma~2.2 implies that every prehomomorphism between inverse semigroups induces a functor between their associated groupoids.

\subsection{Inductive groupoids}

Let $(G,\cdot)$ be a groupoid, and let $\leq$ be a partial order defined on $G$.  
Then $(G,\cdot,\leq )$ is an {\em ordered groupoid} if the following axioms hold: 

\begin{description}

\item[{\rm (OG1)}] $x \leq  y$ implies $x^{-1} \leq  y^{-1}$ 
for all $x,y \in G$. 

\item[{\rm (OG2)}] For all $x,y,u,v \in G$, 
if $x \leq  y, u \leq  v, \exists xu$ and $\exists yv$ then
$xu \leq  yv$. 

\item[{\rm (OG3)}] Let $x \in G$ and let $e$ be an identity 
such that $e \leq  {\bf d}(x)$. 
Then there exists a unique element $(x\, | \,e)$, 
called the {\it restriction of}\index{restriction in an ordered groupoid} 
$x$ {\it to} $e$, such that $(x\, | \,e) \leq  x$ and ${\bf d}(x\,|\,e) = e$. 

\item[{\rm (OG3)$^{\ast}$}] Let $x \in G$ and let $e$ be an identity 
such that $e \leq  {\bf r}(x)$. 
Then there exists a unique  element $(e \, | \, x)$,  
called  the  {\it corestriction  of}\index{corestriction in an order\-ed group\-oid}  
$x$  {\it to}  $e$,  
such  that  $(e \, | \, x) \leq  x$  and ${\bf r}(e\, | \,x) = e$. 

\end{description}

An ordered groupoid is said to be {\em inductive} if the partially ordered set of identities forms a meet-semilattice.
This term was used by Ehresmann to refer to a more restricted class of ordered groupoids than we have defined, but the terminology is now well-established.
As we shall see, if the groupoid is actually a group the order degenerates to equality and so ordered groupoids do not generalize ordered groups.

A functor between two ordered groupoids is said to 
be {\em ordered}
if it is order-preserving.  
An ordered functor between two inductive groupoids is said to be 
{\em inductive}\ 
if it preserves the meet operation on the set of identities.  
An {\em isomorphism} of ordered groupoids is a bijective ordered
functor whose inverse is an ordered functor.

\begin{lemma} Let $\theta \colon G \rightarrow H$ be an ordered
functor between ordered groupoids. 
\begin{enumerate}

\item If $(x \, | \, e)$ is defined in $G$ 
then $(\theta (x) \, | \, \theta (e))$ is defined in $H$ and 
$\theta (x \, | \, e) = (\theta (x) \, | \, \theta (e))$.

\item If $(e \, | \, x)$ is defined in $G$ then 
$(\theta (e) \, | \, \theta (x))$ is defined in $H$ and 
$\theta (e \, | \, x) = (\theta (e) \, | \, \theta (x))$.

\end{enumerate}
\end{lemma}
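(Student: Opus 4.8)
The plan is to reduce both parts to the uniqueness clause in the restriction axioms (OG3) and (OG3)$^{\ast}$, exploiting only three features of an ordered functor: that as a functor $\theta$ sends identities to identities and commutes with the domain and range maps, and that as an ordered functor it is order-preserving. Parts (1) and (2) are formally dual, so I would write out (1) in full and then obtain (2) by the symmetric argument with ${\bf r}$ in place of ${\bf d}$.

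For (1), suppose $(x \, | \, e)$ is defined in $G$, so by (OG3) we have that $e$ is an identity with $e \leq {\bf d}(x)$, and that $(x \, | \, e)$ is characterized by $(x \, | \, e) \leq x$ together with ${\bf d}(x \, | \, e) = e$. First I would check that the expression $(\theta (x) \, | \, \theta (e))$ even makes sense in $H$. Since $\theta$ is a functor it carries the identity $e$ to an identity $\theta (e)$, and it satisfies ${\bf d}(\theta (x)) = \theta ({\bf d}(x))$. Applying order-preservation to $e \leq {\bf d}(x)$ then gives $\theta (e) \leq \theta ({\bf d}(x)) = {\bf d}(\theta (x))$. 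Hence the hypotheses of (OG3) in $H$ are met and $(\theta (x) \, | \, \theta (e))$ is defined.

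It remains to identify $\theta (x \, | \, e)$ with $(\theta (x) \, | \, \theta (e))$, and here the point is to verify that $\theta (x \, | \, e)$ satisfies the two defining properties of the restriction and then appeal to uniqueness. From $(x \, | \, e) \leq x$ and order-preservation I get $\theta (x \, | \, e) \leq \theta (x)$; from functoriality I get ${\bf d}(\theta (x \, | \, e)) = \theta ({\bf d}(x \, | \, e)) = \theta (e)$. Thus $\theta (x \, | \, e)$ is an element of $H$ below $\theta (x)$ with domain $\theta (e)$, and the uniqueness assertion in (OG3) forces $\theta (x \, | \, e) = (\theta (x) \, | \, \theta (e))$.

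Part (2) is proved in exactly the same way, replacing ${\bf d}$ by ${\bf r}$ and invoking (OG3)$^{\ast}$ throughout. There is no real obstacle in this argument: the only thing to resist is the temptation to compute the restriction explicitly, since everything follows cleanly once one recognizes that $\theta (x \, | \, e)$ manifestly meets the characterizing conditions and that uniqueness does the rest.
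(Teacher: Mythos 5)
Your proof is correct and follows essentially the same route as the paper's: apply $\theta$ to the two defining properties of $(x \, | \, e)$, using order-preservation and functoriality, then invoke the uniqueness clause of (OG3), with (2) following dually via (OG3)$^{\ast}$. Your only addition is the explicit check that $(\theta(x) \, | \, \theta(e))$ is defined in $H$, a point the paper leaves implicit, so if anything your write-up is slightly more complete.
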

\begin{proof}
We shall prove (1); the proof of (2) is similar.
By definition $(x \, | \, e) \leq x$ and so 
$\theta (x \, | \, e) \leq \theta (x)$
since $\theta$ is an ordered functor. 
But 
$${\bf d}(\theta (x \, | \, e)) 
= \theta ({\bf d}(x \, | \, e)) = \theta (e)$$
since $\theta$ is a functor.
But by axiom (OG3), $(\theta (x) \, | \, \theta (e))$ is the unique element
less than $\theta (x)$ and with domain $\theta (e)$.
Thus $\theta (x \, | \, e) = (\theta (x) \, | \, \theta (e))$.
\end{proof}

We now establish some of the basic properties of ordered groupoids.

\begin{lemma} Let $(G,\cdot,\leq)$ be an ordered groupoid.  
\begin{enumerate}

\item[{\rm (1)}] If $x \leq  y$ then ${\bf d}(x) \leq  {\bf d}(y)$ 
and ${\bf r}(x) \leq  {\bf r}(y)$. 

\item[{\rm (2)}] The order $\leq $ restricted to {\rm hom-sets} is trivial. 

\item[{\rm (3)}] If $\exists xy$ and $e$ is an identity such that 
$e \leq  {\bf d}(xy)$ then 
$$(xy \, | \, e) = (x \, | \, {\bf r} ( y \, | \, e))(y \, | \, e).$$ 

\item[{\rm (4)}] If $\exists xy$ and $e$ is an identity such that 
$e \leq  {\bf r}(xy)$ then 
$$(e \, | \, xy) = (e \, | \, x)({\bf d}(e \, | \, x)\, | \, y).$$ 

\item[{\rm (5)}] If $z \leq  xy$ then there exist elements 
$x'$ and $y'$ such that $\exists x'y'$,  $x' \leq  x$, $y' \leq  y$ 
and $z = x'y'$. 

\item[{\rm (6)}] Axiom {\em (OG3)}$^*$ is a consequence of the axioms
{\em (OG1)} and {\em (OG3)}. 

\item[{\rm (7)}] The set of identities $G_{o}$ is an order ideal of $G$. 

\item[{\rm (8)}] If  $f \leq  e \leq  {\bf d}(x)$ then 
$(x \, | \, f) \leq  (x \, | \, e) \leq x$. 

\item[{\rm (9)}] If $f \leq  e \leq  {\bf r}(x)$ then 
$(f \, | \, x) \leq  (e \, | \, x) \leq x$.

\item[{\rm (10)}] Let $x,y,e,f \in G$ such that $x \leq y$, $f \leq e$,
$f \leq {\bf d}(x)$ and $e \leq {\bf d}(y)$.
Then $(x \, | \, f) \leq (y \, | \, e)$.

\end{enumerate}
\end{lemma}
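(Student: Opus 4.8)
The plan is to reduce the desired inequality to a single restriction of $y$ and then quote part~(8). The first observation I would make is that $x$ is itself a restriction of $y$. Since $x \leq y$, part~(1) gives ${\bf d}(x) \leq {\bf d}(y)$, so by (OG3) the restriction $(y \, | \, {\bf d}(x))$ is defined, and it is the \emph{unique} element below $y$ whose domain is ${\bf d}(x)$. But $x$ is such an element, so the uniqueness clause of (OG3) forces $x = (y \, | \, {\bf d}(x))$. This is the key structural fact: below $y$, restriction and the order agree.

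Next I would establish a transitivity property for restriction: whenever $f \leq {\bf d}(x)$, we have $(x \, | \, f) = (y \, | \, f)$. Indeed, $(x \, | \, f) \leq x \leq y$ and ${\bf d}(x \, | \, f) = f$, while the chain $f \leq {\bf d}(x) \leq {\bf d}(y)$ guarantees that $(y \, | \, f)$ is defined. Since $(x \, | \, f)$ is an element below $y$ with domain $f$, the uniqueness clause of (OG3) identifies it with $(y \, | \, f)$. Thus the restriction of $x$ to $f$ is nothing but the restriction of $y$ to $f$.

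Finally, the hypotheses $f \leq e$ and $e \leq {\bf d}(y)$ place us exactly in the situation of part~(8) applied to $y$, giving $(y \, | \, f) \leq (y \, | \, e)$. Combining this with the previous step yields
$$(x \, | \, f) = (y \, | \, f) \leq (y \, | \, e),$$
which is the assertion of~(10).

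I expect no real obstacle here: the only point requiring a moment's care is the transitivity of restriction, and even that is an immediate consequence of the uniqueness in (OG3) once one checks that every composite element in sight has the matching domain it is required to have. Everything else is a direct appeal to parts~(1) and~(8), which are already available.
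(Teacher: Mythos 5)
Your proposal is correct and follows essentially the same route as the paper: both establish $(x \, | \, f) = (y \, | \, f)$ via the uniqueness clause of (OG3) (using $(x \, | \, f) \leq x \leq y$ and matching domains), then apply part~(8) to $y$ to get $(y \, | \, f) \leq (y \, | \, e)$ and combine. Your opening observation that $x = (y \, | \, {\bf d}(x))$ is a harmless extra remark, not needed for the argument.
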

\begin{proof}

(1) This is immediate from axioms (OG1) and (OG2). 

(2) Suppose that 
$${\bf d}(x) = {\bf d}(y), \, {\bf r}(x) = {\bf r}(y) 
\mbox{ and } x \leq  y.$$  
In particular, ${\bf d}(x) \leq  {\bf d}(y)$ 
and so by axiom (OG3) there is a unique element $(y\, | \, {\bf d}(x))$ 
such that 
$$(y\, | \, {\bf d}(x)) \leq  y 
\mbox{ and } 
{\bf d}(y\, | \, {\bf d}(x)) = {\bf d}(x).$$  
But the element $x$ also has the property that 
$x \leq  y$ and ${\bf d}(x) = {\bf d}(x)$.  
Thus by uniqueness $(y\, | \, {\bf d}(x)) = x$.  
However, $(y \, | \, {\bf d}(y)) = x$ 
since ${\bf d}(x) = {\bf d}(y)$.
But $(y\, | \, {\bf d}(y)) = y$.  
Hence $x = y$.

(3) Since $e \leq  {\bf d}(xy) = {\bf d}(y)$ 
the restriction $(y\, | \, e)$ is defined.  
Since $(y\, | \, e) \leq  y$ we have that 
${\bf r}(y\, | \, e) \leq  {\bf r}(y) = {\bf d}(x)$.  
Thus $(x \, | \, {\bf r} (y\, | \, e))$ exists and the product 
$(x\, | \, {\bf r}(y\, | \, e))(y \, | \, e)$ exists.  
Clearly, $(x\, | \, {\bf r}(y\, | \, e))(y \, | \, e) \leq  xy$.   
But
$${\bf d}((x\, | \, {\bf r}(y\, | \, e))(y \, | \, e)) = e,$$
\noindent
and so $(x\, | \, {\bf r}(y \, | \, e)) (y \, | \, e) = (xy\, | \, e)$.

(4) Similar to the proof of (3). 

(5) Let $z \leq  xy$.  
Then ${\bf d}(z) \leq  {\bf d}(xy)$ by (1).  
Thus $(xy\, | \, {\bf d}(z))$ exists.  
Now ${\bf d}(xy\, | \, {\bf d}(z)) = {\bf d}(z)$ 
and $(xy\, | \, {\bf d}(z)) \leq xy$, so that $z = (xy\, | \, {\bf d}(z))$.  
By (3),
$$(xy\, | \, {\bf d}(z)) 
= (x\, | \, {\bf r}(y \, | \, {\bf d}(z)))(y \, | \, {\bf d}(z)).$$
Put $x'  = (x\, | \, {\bf r}(y \, | \, {\bf d}(z)))$ 
and $y'  = (y\, | \, {\bf d}(z))$, and we have the result. 

(6) Suppose that the axioms (OG1) and (OG3) hold.  
We show that axiom (OG3)$^*$ holds.  
Let $e \leq  {\bf r}(x)$.  
Then $e \leq  {\bf d}(x^{-1})$, 
so that $(x^{-1}\, | \, e)$ exists by axiom (OG3).  
Define $(e\, | \, x) = (x^{-1}\, | \, e)^{-1}$.  
Then  $(e \,| \,x) \leq  x$ by axiom (OG1),
and ${\bf r}(e \,|\, x) = {\bf d}(x^{-1}\, | \, e) = e$.  
Now for uniqueness.  
Suppose that $y \leq  x$ and ${\bf r}(y) = e$.  
Then $y^{-1} \leq  x^{-1}$ by axiom (OG1)
and ${\bf d}(y^{-1}) = e$. 
Thus $y^{-1} = (x^{-1}\, | \, e)$ by (OG3), 
and so $y = (e\, | \, x)$ by (OG1). 

(7) Let $x \leq  e$ where $e$ is an identity.  
Then ${\bf d}(x) \leq  e$. 
But $x,{\bf d}(x) \leq e$ and ${\bf d}(x) = {\bf d}({\bf d}(x))$.
Thus $x = {\bf d}(x)$. 

(8) Let $f \leq  e \leq  {\bf d}(x)$.  
Both $(x\, | \, e)$ and $(x\, | \, f)$ exist.  
Now $f \leq  {\bf d}(x\, | \, e)$ and so the element 
$((x\, | \, e) \, |\, f)$ exists.  
But ${\bf d}(x \, | \, f) = f$ and $(x \, | \, f) \leq  x$.  
Thus $((x \, | \, e)\, | \, f) = (x\, | \, f)$, 
and so $(x\, | \, f) \leq (x \, | \, e)$. 

(9) Similar to (8). 

(10) By (8) we have that $(y \, | \, f) \leq (y \, | \, e)$.
However, $(x \, | \, f),(y \, | \, f) \leq y$
and ${\bf d}(x \, | \, f) = {\bf d}(y \, | \, f)$.
Thus by axiom (OG3), we have that $(x \, | \, f) = (y \, | \, f)$.
Hence $(x \, | \, f) \leq (y \, | \, e)$.
\end{proof}

Let $G$ be an ordered groupoid and $H$ a subset of $G$. 
Then we say that $H$
is an {\em ordered subgroupoid} 
if it is a subgroupoid of $G$
and an ordered groupoid with respect to the induced order.
This is equivalent to the condition that $H$ be a subgroupoid of $G$
and that if $x \in H$ and $e \in H_{o}$  and $e \leq {\bf d}(x)$
then $(x \, | \, e) \in H$.

Let $\theta \colon \: G \rightarrow K$ 
be an injective ordered functor. 
The image of $\theta$ is a subgroupoid of $K$,
because if $\theta (x)\theta(y)$ is defined in $K$
then $\theta ({\bf d}(x)) = \theta ({\bf r}(y))$
and so $xy$ is defined in $G$;
this gives $\theta (x)\theta (y) = \theta (xy)$.
However, the image of $\theta$ need not be an ordered subgroupoid of $K$.
A stronger notion than an injective ordered functor is what we
term an {\em ordered embedding};
this is an ordered functor $\theta \colon \:G \rightarrow K$
such that for all $g,h \in G$
$$g \leq h \Leftrightarrow \theta (g) \leq \theta (h).$$
The image of $\theta$ is an ordered subgroupoid of $K$
which is isomorphic to $G$.

In verifying that a structure is an ordered groupoid, it is
sometimes more convenient to use the following characterization.
We shall need the following two axioms.
Let $G$ be a groupoid and $\leq$ a partial order defined on $G$.
The axioms (OI) and (OG4) are defined as follows:

\begin{description}

\item[{\rm (OI)}] $G_{o}$ is an order ideal of $G$.

\item[{\rm (OG4)}] For all $x \in G$ and $e \in G_{o}$,
if $e \leq {\bf d}(x)$ then there exists $y \in G$ 
such that $y \leq x$ and ${\bf d}(y) = e$.

\end{description}

\begin{lemma} Let $(G,\cdot)$ be a groupoid and $\leq$
a partial order defined on $G$.
Then $(G,\cdot,\leq)$ is an ordered groupoid if, and only if,
the axioms {\em (OG1)}, {\em (OG2)}, {\em (OI)} and {\em (OG4)} hold. 
\end{lemma}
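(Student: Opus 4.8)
\section*{Proof proposal}

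The plan is to prove both implications, reducing everything to a single nontrivial point, namely the uniqueness clause of (OG3). Since (OG1) and (OG2) appear in both lists of axioms, they need no attention, and the proof of Lemma~2.4(1) uses only (OG1) and (OG2), so it is available under either set of hypotheses.

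For the forward direction, assume $(G,\cdot,\leq)$ is an ordered groupoid, so that (OG1), (OG2), (OG3) and (OG3)$^*$ hold. Then (OI) is exactly the content of Lemma~2.4(7), and (OG4) is nothing but the existence half of (OG3): given $e \leq \mathbf{d}(x)$, take $y = (x\,|\,e)$. Thus this direction requires no new work.

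For the converse, assume (OG1), (OG2), (OI) and (OG4). I first observe that it suffices to establish (OG3), since the proof of Lemma~2.4(6) derives (OG3)$^*$ from (OG1) and (OG3) alone. Given $e \leq \mathbf{d}(x)$, the existence of some $y \leq x$ with $\mathbf{d}(y) = e$ is precisely (OG4), so the whole substance lies in uniqueness. Suppose then that $y, z \leq x$ with $\mathbf{d}(y) = \mathbf{d}(z) = e$. The key step is that the product $zy^{-1}$ exists, because $\mathbf{d}(z) = e = \mathbf{d}(y) = \mathbf{r}(y^{-1})$. By (OG1) we have $y^{-1} \leq x^{-1}$, and both $zy^{-1}$ and $xx^{-1}$ exist, so (OG2) applied to $z \leq x$ and $y^{-1} \leq x^{-1}$ gives $zy^{-1} \leq xx^{-1} = \mathbf{r}(x)$. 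As $\mathbf{r}(x)$ is an identity, (OI) forces $zy^{-1}$ to be an identity; comparing its domain $\mathbf{r}(y)$ with its range $\mathbf{r}(z)$ yields $\mathbf{r}(y) = \mathbf{r}(z)$ and $zy^{-1} = \mathbf{r}(z)$. Right-multiplying by $y$ and using associativity then gives $z = z(y^{-1}y) = (zy^{-1})y = \mathbf{r}(z)\,y = y$, so uniqueness holds and (OG3) is established.

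The main obstacle is isolating this uniqueness argument: the trick is to form $zy^{-1}$, which exists exactly because the two candidate restrictions share the domain $e$, and then to notice that (OG2) together with the order-ideal axiom (OI) collapses it to an identity, after which a one-line groupoid computation finishes the job. Everything else is either a direct appeal to (OG4) or a citation of the relevant clauses of Lemma~2.4.
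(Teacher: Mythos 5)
Your proof is correct and follows essentially the same route as the paper: the forward direction via Lemma~2.4(7) and the existence half of (OG3), and the converse by reducing to (OG3) through Lemma~2.4(6) and proving uniqueness by forming the product $zy^{-1}$, using (OG1) and (OG2) to bound it above by the identity $xx^{-1}$, and invoking (OI) to conclude it is an identity. The only difference is that you spell out the final groupoid computation $z = (zy^{-1})y = y$, which the paper leaves implicit.
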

\begin{proof}

If $G$ is an ordered groupoid 
then axioms (OG1) and (OG2) hold by definition, 
axiom (OI) holds by Lemma~2.4(7) and axiom (OG3) 
implies that axiom (OG4) holds.

To prove the converse, it is enough to show that axiom (OG3) holds 
because axiom (OG3)$^{\ast}$ follows from the other axioms for an ordered
groupoid by Lemma~2.4(6).
Let $u,v \leq x$ be such that ${\bf d}(u) = {\bf d}(v) = e$.
We shall show that $u = v$ which, together with axiom (OG4), will imply
that axiom (OG3) holds.
Clearly ${\bf d}(u) = {\bf r}(v^{-1}) = e$.
Thus $uv^{-1}$ is defined.
By  axiom (OG1),  we have that $v^{-1} \leq x^{-1}$.
Thus by axiom (OG2), we have that $uv^{-1} \leq xx^{-1}$.
Now $xx^{-1}$ is an identity and so by axiom (OI), the element $uv^{-1}$
is an identity.
Thus $u = v$, as required.
\end{proof}

\subsection{The Ehresmann-Schein-Nambooripad theorem}

From our results in Chapter~1 and by Lemma~2.1, we have the following.

\begin{proposition} Let $S$ be an inverse semigroup.  
Then $(S,\cdot,\leq )$ is an inductive groupoid.
\end{proposition}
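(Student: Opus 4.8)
The plan is to verify directly that $(S,\cdot,\leq)$ satisfies the axioms of an inductive groupoid, drawing on the results of Chapter~1 and on Lemma~2.1. First I would recall from Chapter~1 that the restricted product makes $(S,\cdot)$ into a groupoid whose identities are precisely the idempotents of $S$, with ${\bf d}(s) = s^{-1}s$ and ${\bf r}(s) = ss^{-1}$, and that the natural partial order $\leq$ is compatible with both inversion and multiplication. Compatibility with inversion gives axiom (OG1) at once. For axiom (OG2), if $x \leq y$ and $u \leq v$ with $\exists xu$ and $\exists yv$, then $xu \leq yv$ for the \emph{full} product because $\leq$ is compatible with multiplication; since the restricted product agrees with the full product wherever the former is defined, this yields $x \cdot u \leq y \cdot v$.

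The restriction axioms are essentially the content of Lemma~2.1. Given $s \in S$ and an idempotent $e \leq {\bf d}(s) = s^{-1}s$, part~(1) of that lemma produces the unique element $a = se$ with $a \leq s$ and ${\bf d}(a) = a^{-1}a = e$; setting $(s \, | \, e) = se$ verifies (OG3). Dually, part~(2) furnishes $(e \, | \, s) = es$ and verifies (OG3)$^{\ast}$. Alternatively one could invoke Lemma~2.6 and check only (OI) and (OG4), the order-ideal property of the idempotents and the existence half of the restriction, but since Lemma~2.1 already supplies uniqueness there is little to be gained.

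Finally, to see that this ordered groupoid is inductive I would recall from Chapter~1 that the idempotents $E(S)$ form a commutative semilattice under multiplication and that the natural partial order restricts on $E(S)$ to the usual semilattice order, under which the greatest lower bound of $e$ and $f$ is $ef$. Hence the poset of identities is a meet-semilattice, which is precisely the inductive condition.

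I do not expect a genuine obstacle here: the proposition is an assembly of facts established earlier, with Lemma~2.1 doing the substantive work of exhibiting the restrictions together with their uniqueness. The only point requiring slight care is the translation between the semigroup-theoretic data ($s^{-1}s$, $ss^{-1}$, and the idempotent product) and the groupoid-theoretic data (${\bf d}$, ${\bf r}$, and the meet on identities), together with the observation that it is compatibility of $\leq$ with the full multiplication that underwrites axiom (OG2) for the restricted product.
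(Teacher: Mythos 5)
Your proof is correct and follows exactly the route the paper intends: the paper gives no detailed argument for this proposition, simply citing the Chapter~1 facts about the restricted product and natural partial order together with Lemma~2.1, which is precisely the material you assemble. Your verification of (OG1)--(OG3)$^{\ast}$ and of the meet-semilattice condition on idempotents is the correct expansion of that one-line citation.
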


The inductive groupoid associated with $S$ is denoted by ${\bf G}(S)$.

We now show how to construct an inverse semigroup from an inductive groupoid.

Let $G$ be an ordered groupoid and let 
$x,y \in  G$ be such that 
$e = {\bf d}(x) \wedge  {\bf r}(y)$ exists.  
Put 
$$x \otimes  y = (x\, | \, e)(e \, | \, y),$$ 
and call $x \otimes  y$ the 
{\it pseudoproduct}
of $x$ and $y$. 
It is immediate from the definition that the pseudoproduct is 
everywhere defined in an inductive groupoid.
The next result provides a neat, order-theoretic way of viewing the 
pseudoproduct.

\begin{lemma} Let $G$ be an ordered groupoid.  
For each pair $x,y \in  G$ put 
$$\langle x,y \rangle\, 
= \{(x',y') \in  G \times  G \colon 
{\bf d}(x' ) = {\bf r}(y') \mbox{ and } x' \leq   x 
\mbox{ and } y'  \leq  y\},$$
regarded as a subset of the ordered set $G \times G$.
Then $x \otimes  y$ exists if, and only if, there is a maximum element 
$(x',y')$ of $\langle x,y \rangle$.  
In which case, $x \otimes  y = x'y'$.
\end{lemma}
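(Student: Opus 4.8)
The plan is to prove both implications by showing that, whenever $e = {\bf d}(x) \wedge {\bf r}(y)$ exists, the pair $((x \, | \, e),(e \, | \, y))$ is precisely the maximum of $\langle x,y \rangle$. The conceptual pivot is that membership in $\langle x,y \rangle$ is a strong constraint: if $(x',y') \in \langle x,y \rangle$, then by Lemma~2.4(1) the common identity $f = {\bf d}(x') = {\bf r}(y')$ satisfies $f \leq {\bf d}(x)$ and $f \leq {\bf r}(y)$, so $f$ is a common lower bound of ${\bf d}(x)$ and ${\bf r}(y)$. Moreover, once $f$ is fixed, the uniqueness clauses of (OG3) and (OG3)$^{\ast}$ pin down the components exactly as $x' = (x \, | \, f)$ and $y' = (f \, | \, y)$. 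Thus the elements of $\langle x,y \rangle$ are parametrized by the common lower bounds $f$ of ${\bf d}(x)$ and ${\bf r}(y)$, and comparing two such pairs reduces to comparing the associated identities.

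For the forward direction I would assume $e = {\bf d}(x) \wedge {\bf r}(y)$ exists and put $x_{0} = (x \, | \, e)$ and $y_{0} = (e \, | \, y)$. Since ${\bf d}(x_{0}) = e = {\bf r}(y_{0})$ and $x_{0} \leq x$, $y_{0} \leq y$, we have $(x_{0},y_{0}) \in \langle x,y \rangle$. Given any $(x',y') \in \langle x,y \rangle$ with common identity $f$ as above, the fact that $e$ is the meet gives $f \leq e$. Then Lemma~2.4(8) yields $x' = (x \, | \, f) \leq (x \, | \, e) = x_{0}$, and Lemma~2.4(9) yields $y' = (f \, | \, y) \leq (e \, | \, y) = y_{0}$, so $(x',y') \leq (x_{0},y_{0})$ in the product order on $G \times G$. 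Hence $(x_{0},y_{0})$ is the maximum, and $x \otimes y = x_{0}y_{0}$ by the definition of the pseudoproduct.

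For the reverse direction I would assume $\langle x,y \rangle$ has a maximum $(x',y')$ and put $f = {\bf d}(x') = {\bf r}(y')$. As noted, $f$ is a common lower bound of ${\bf d}(x)$ and ${\bf r}(y)$; the work is to show it is the greatest. Given any identity $g$ with $g \leq {\bf d}(x)$ and $g \leq {\bf r}(y)$, the restriction $(x \, | \, g)$ and corestriction $(g \, | \, y)$ exist and satisfy ${\bf d}(x \, | \, g) = g = {\bf r}(g \, | \, y)$, so $((x \, | \, g),(g \, | \, y)) \in \langle x,y \rangle$. Maximality forces $(x \, | \, g) \leq x'$, and applying Lemma~2.4(1) to this inequality gives $g = {\bf d}(x \, | \, g) \leq {\bf d}(x') = f$. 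Hence $f = {\bf d}(x) \wedge {\bf r}(y)$, so $x \otimes y$ exists; and since $x' \leq x$ with ${\bf d}(x') = f$, uniqueness in (OG3) gives $x' = (x \, | \, f)$, and similarly $y' = (f \, | \, y)$, whence $x \otimes y = x'y'$.

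The main obstacle is the reverse direction: one must manufacture the existence of the meet ${\bf d}(x) \wedge {\bf r}(y)$ out of a purely order-theoretic maximality hypothesis, with no semilattice assumption available. The decisive step is feeding an arbitrary common lower bound $g$ back into $\langle x,y \rangle$ as the pair of restrictions $((x \, | \, g),(g \, | \, y))$ and then reading off $g \leq f$ from maximality together with Lemma~2.4(1). Everything else is bookkeeping with the restriction/corestriction uniqueness and the monotonicity of restriction and corestriction supplied by Lemma~2.4(8) and (9).
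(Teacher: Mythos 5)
Your proof is correct and follows essentially the same route as the paper's: both directions parametrize elements of $\langle x,y \rangle$ by common lower bounds of ${\bf d}(x)$ and ${\bf r}(y)$ via the uniqueness clauses of (OG3) and (OG3)$^{\ast}$, use Lemma~2.4(1), (8), (9) to get the maximality in the forward direction, and in the converse feed an arbitrary common lower bound back into $\langle x,y \rangle$ as a restriction/corestriction pair to extract the meet. The only difference is that you spell out a couple of steps (the deduction $g \leq f$ and the final identification $x' = (x\,|\,f)$, $y' = (f\,|\,y)$) that the paper leaves as immediate.
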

\begin{proof}
Suppose that $x \otimes y$ exists.
Then $e = {\bf d}(x) \wedge  {\bf r}(y)$ exists,
and $$((x \,|\, e),(e \,|\, y)) \in \langle x,y \rangle.$$
Let \mbox{$(u,v) \in \,\langle x,y \rangle$.}
Then 
$$u \leq x,\, v \leq y \mbox{ and } {\bf d}(u) = {\bf r}(v) = f,$$ 
say.
Thus by axioms (OG3) and (OG3)$^{\ast}$ we have that
$u = (x \, | \, f)$ and $v = (f \, | \, y)$.
By Lemma~2.4(1), 
${\bf d}(u) \leq {\bf d}(x)$ and ${\bf r}(v) \leq {\bf r}(y)$.
Thus $f \leq {\bf d}(x),{\bf r}(y)$,
and so, by assumption, $f \leq e$.
By Lemma~2.4(8),(9),  it follows that
$$u = (x \, | \, f) \leq (x \, | \, e)
\mbox{ and } 
v = (f \, | \, y) \leq (e \, | \, y).$$
Thus $((x \, | \, e),(e \, | \, y))$ is the maximum 
element of $\langle x,y \rangle$.  

Conversely, suppose that the maximum element of $\langle x,y \rangle$ 
exists and equals $(x',y')$.  
Put $e = {\bf d}(x') = {\bf r}(y')$.  
Clearly, $e \leq  {\bf d}(x),{\bf r}(y)$.  
Now let $f$ be any identity such that $f \leq  {\bf d}(x),{\bf r}(y)$.  
Then 
$$(x\, | \, f) \leq  x, \, (f\, | \, y) \leq  y 
\mbox{ and }
{\bf d}(x \, | \, f) = f = {\bf r}(f \, | \, y).$$  
Thus $((x \, | \, f),(f \, | \, y)) \in \, \langle x,y \rangle$
and so $((x \, | \, f),(f \, | \, y)) \leq (x',y')$.  
Hence $f \leq  e$, which implies that $e = {\bf d}(x) \wedge {\bf r}(y)$.
Thus $x \otimes y$ exists.
The proof of the last assertion is now immediate.
\end{proof}

It will be an immediate consequence of the following result that 
the pseudoproduct on an inductive groupoid is associative.

\begin{lemma}
Let $G$ be an ordered groupoid.  
Then for all $x,y,z \in G$
if $x \otimes  (y \otimes  z)$ and 
$(x \otimes  y) \otimes  z$ both exist 
then they are equal.
\end{lemma}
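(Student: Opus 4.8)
The plan is to reduce the associativity question to a single order-theoretic object, in the spirit of Lemma~2.8, so as to avoid manipulating the two bracketings directly against one another. For $x,y,z \in G$ I would introduce the set of composable triples below $(x,y,z)$,
$$\langle x,y,z \rangle = \{(a,b,c) \in G^{3} : a \leq x,\ b \leq y,\ c \leq z,\ {\bf d}(a) = {\bf r}(b),\ {\bf d}(b) = {\bf r}(c)\},$$
ordered componentwise as a subset of $G \times G \times G$. Whenever $(a,b,c) \in \langle x,y,z\rangle$ the products $ab$ and $bc$ exist, hence so does $abc$ by associativity of the partial groupoid multiplication. The target claim is that each of $(x\otimes y)\otimes z$ and $x \otimes (y \otimes z)$ exists if, and only if, $\langle x,y,z\rangle$ has a maximum $(a^{\ast},b^{\ast},c^{\ast})$, in which case the product in question equals $a^{\ast}b^{\ast}c^{\ast}$. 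Since a poset has at most one maximum, this immediately yields the lemma: if both bracketings exist they are computed from the same maximal triple and so coincide.

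Next I would establish the claim for $(x\otimes y)\otimes z$; the case $x\otimes(y\otimes z)$ is the mirror image. Assuming $(x\otimes y)\otimes z$ exists, Lemma~2.8 supplies the maximum $(x_0,y_0)$ of $\langle x,y\rangle$ with $x\otimes y = x_0 y_0$, and then the maximum $(p,z_1)$ of $\langle x_0 y_0, z\rangle$ with $(x\otimes y)\otimes z = p z_1$. Applying Lemma~2.4(5) to $p \leq x_0 y_0$ produces $p = x_1 y_1$ with $x_1 \leq x_0 \leq x$, $y_1 \leq y_0 \leq y$ and $\exists x_1 y_1$; a check of domains and ranges then shows $(x_1,y_1,z_1) \in \langle x,y,z\rangle$ and $(x\otimes y)\otimes z = x_1 y_1 z_1$. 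For maximality, take any $(a,b,c)\in\langle x,y,z\rangle$. Then $(a,b)\in\langle x,y\rangle$ forces $a\leq x_0$, $b\leq y_0$ and $ab \leq x_0 y_0$ by (OG2); hence $(ab,c)\in\langle x_0 y_0,z\rangle$, which forces $ab \leq p = x_1 y_1$ and $c \leq z_1$.

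The crux is to pass from the inequality $ab \leq x_1 y_1$ to the componentwise inequalities $a \leq x_1$ and $b \leq y_1$, and this is where the genuine work lies. The device is the uniqueness built into (OG3) and (OG3)$^{\ast}$: if $u,v \leq w$ and ${\bf r}(u) = {\bf r}(v)$, then both $u$ and $v$ equal the corestriction $({\bf r}(u)\,|\,w)$, so $u = v$ (and dually for domains). Concretely, Lemma~2.4(5) applied to $ab \leq x_1 y_1$ yields a factorization $ab = a'b'$ with $a' \leq x_1 \leq x_0$ and $b' \leq y_1 \leq y_0$. Comparing ranges gives ${\bf r}(a) = {\bf r}(ab) = {\bf r}(a'b') = {\bf r}(a')$, and since $a,a' \leq x_0$ the uniqueness principle forces $a = a'$, whence $a \leq x_1$. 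Cancelling $a$ in the groupoid equation $ab = ab'$ then gives $b = b' \leq y_1$. Thus $(a,b,c) \leq (x_1,y_1,z_1)$, so this triple is the maximum of $\langle x,y,z\rangle$.

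Finally I would record the mirror argument for $x\otimes(y\otimes z)$: here Lemma~2.8 is used first on $\langle y,z\rangle$ and then on $\langle x, y_0 z_0\rangle$, Lemma~2.4(5) splits the resulting second factor as $y_1 z_1$, and the uniqueness principle is invoked in its domain form to show $c = c'$ (forcing $c \leq z_1$) before cancelling to obtain $b \leq y_1$. Both bracketings therefore arise as the product over the unique maximum of $\langle x,y,z\rangle$, and equality follows. I expect the componentwise descent from $ab \leq x_1 y_1$ to $a\leq x_1$ and $b\leq y_1$ to be the only delicate point; everything else is bookkeeping with Lemmas~2.4 and~2.8.
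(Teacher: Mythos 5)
Your proof is correct, but it takes a genuinely different route from the paper's. The paper also starts from Lemma~2.7 (the maximum-element characterization of $\otimes$) and Lemma~2.4(5): it writes $(x\otimes y)\otimes z = (x''y'')z'$ with $x'' \leq x$, $y'' \leq y$, $z' \leq z$, regroups to $x''(y''z')$, observes that $(y'',z') \in \langle y,z\rangle$ and $(x'',y''z') \in \langle x, y\otimes z\rangle$, and concludes via (OG2) and Lemma~2.7 that $(x\otimes y)\otimes z \leq x\otimes(y\otimes z)$; the reverse inequality holds by symmetry, and antisymmetry finishes the proof. You instead route both bracketings through a canonical object, the poset $\langle x,y,z\rangle$ of composable triples below $(x,y,z)$, showing that each bracketing, when it exists, equals the product of the (necessarily unique) maximum triple. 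This costs you one nontrivial step that the paper's argument never needs, namely the descent from $ab \leq x_1y_1$ to the componentwise inequalities $a \leq x_1$ and $b \leq y_1$; you handle it correctly, using exactly the uniqueness mechanism the paper exploits elsewhere (both $a$ and $a'$ lie below $x_0$ and have the same range, so both equal the corestriction $({\bf r}(a)\,|\,x_0)$, hence $a = a'$, and then groupoid cancellation gives $b = b'$). What it buys is a stronger intermediate result: a three-variable analogue of Lemma~2.7, from which equality of the two bracketings drops out by uniqueness of maxima, and which extends transparently to $n$-fold pseudoproducts. The paper's two-sided inequality argument is leaner but proves only the stated equality.

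Two small blemishes, neither affecting correctness. First, your citations of ``Lemma~2.8'' for the maximum-element characterization should read Lemma~2.7; Lemma~2.8 is the statement you are proving. Second, you announce your target claim as an ``if and only if,'' but you prove, and need, only the forward implication (existence of a bracketing implies $\langle x,y,z\rangle$ has a maximum computing it). Rightly so: the converse is false in general. For instance, in an ordered groupoid consisting entirely of identities, $\langle x,y,z\rangle$ reduces to the common lower bounds of $x,y,z$, which can have a greatest element even when $x$ and $y$ have no greatest common lower bound, so that $x \otimes y$, and hence $(x\otimes y)\otimes z$, fails to exist. You should either weaken the claim to the implication you actually prove or note that only that direction is used.
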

\begin{proof}
Let $(x \otimes  y) \otimes  z = az'$  
where  $(a,z')$  is  the  maximum  element  of 
$\langle x \otimes  y,z \rangle$.  
Let $x \otimes  y = x'y'$ where  $(x',y')$  
is  the  maximum  element  of $\langle x,y \rangle$.  
Then 
$$a \leq  x \otimes  y, \, z'  \leq  z, \, x'  \leq  x 
\mbox{ and } y'  \leq  y.$$
By Lemma~2.4(5), $a \leq  x'y'$ implies that there 
are elements 
$x'' \leq  x'$ and $y'' \leq  y'$ 
such that $a = x''y''$.  
Thus
$$(x \otimes  y) \otimes  z 
= (x''y'')z' = x''(y''z').$$
Now, $y'' \leq  y' \leq  y$ and $z' \leq  z$, 
so that $(y'',z') \in ~ \langle y,z \rangle$.  
Thus $y''z' \leq  y \otimes  z$.  
Similarly, $(x'',y''z') \in  ~\langle x,y \otimes  z \rangle$ and so 
$x''(y''z') \leq  x \otimes  (y \otimes  z)$.  
Hence
$(x \otimes  y) \otimes  z \leq  x \otimes  (y \otimes  z)$.  
The reverse inequality follows by symmetry.
\end{proof}
         
If $(G,\cdot,\leq)$ is an inductive groupoid, then  $(G,\otimes)$ will be denoted by ${\bf S}(G)$.
We can now show how to construct an inverse semigroup from an 
inductive groupoid.

\begin{proposition} Let $(G,\cdot ,\leq )$ be an inductive groupoid. 
\begin{enumerate}

\item $(G,\otimes )$ is an inverse semigroup. 

\item ${\bf G}({\bf S}(G,\cdot ,\leq )) = (G,\cdot ,\leq )$. 

\item For any inverse semigroup $S$ we have that ${\bf S}({\bf G}(S)) = S$.

\end{enumerate}
\end{proposition}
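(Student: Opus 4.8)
The plan is to handle the three parts in order; the substance is in (1), establishing that $(G,\otimes)$ is an inverse semigroup, after which (2) and (3) are verifications that $\mathbf{S}$ and $\mathbf{G}$ are mutually inverse. For (1), associativity comes essentially for free: since $G$ is inductive the pseudoproduct is everywhere defined, so for every triple the hypotheses of Lemma~2.8 hold and that lemma yields $x \otimes (y \otimes z) = (x \otimes y) \otimes z$; thus $(G,\otimes)$ is a semigroup. For regularity I would propose the groupoid inverse $x^{-1}$ as a $\otimes$-inverse. Directly from the definition, restriction to the full domain or range is trivial, so $x \otimes x^{-1} = (x \, | \, {\bf d}(x))({\bf d}(x) \, | \, x^{-1}) = xx^{-1} = {\bf r}(x)$, and a second such computation gives $x \otimes x^{-1} \otimes x = {\bf r}(x) \otimes x = x$, with the symmetric identity for $x^{-1}$. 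It then remains to show idempotents commute. Here I would first note that on the identities $G_{o}$ the pseudoproduct computes as $e \otimes f = e \wedge f$, so $G_{o}$ is a commutative subsemigroup carrying exactly the semilattice structure; hence once the idempotents of $(G,\otimes)$ are identified with $G_{o}$, they commute, and the standard characterisation (regular with commuting idempotents) completes (1).

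The one genuinely delicate point is the claim that $x \otimes x = x$ forces $x \in G_{o}$. Here I would write $x \otimes x = (x \, | \, e)(e \, | \, x)$ with $e = {\bf d}(x) \wedge {\bf r}(x)$ and compare endpoints: the product has range ${\bf r}(x \, | \, e)$ and domain ${\bf d}(e \, | \, x)$. Equating these with ${\bf r}(x)$ and ${\bf d}(x)$ and passing to inverses, I would invoke the uniqueness in (OG3) and (OG3)$^{\ast}$ (together with Lemma~2.4(1)) to conclude $(x \, | \, e) = x$ and $(e \, | \, x) = x$, which forces $e = {\bf d}(x) = {\bf r}(x)$; cancelling in the local group at $e$ then gives $x = e$. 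This is the step I expect to require the most care, since it is the only place where the order axioms interact nontrivially with the groupoid structure rather than collapsing away.

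For (2), recall that $\mathbf{G}$ applied to the inverse semigroup $(G,\otimes)$ has domain and range maps $s \mapsto s^{-1} \otimes s$ and $s \mapsto s \otimes s^{-1}$, its restricted product is $\otimes$ where that is defined, and it carries the natural partial order. The computations from (1) already give $s^{-1} \otimes s = {\bf d}(s)$ and $s \otimes s^{-1} = {\bf r}(s)$, so the endpoint maps agree with those of $G$. I would then check that the two orders coincide: if $s \leq t$ in $G$ then ${\bf d}(s) \leq {\bf d}(t)$ by Lemma~2.4(1), the meet collapses, and $t \otimes {\bf d}(s) = (t \, | \, {\bf d}(s)) = s$ by uniqueness in (OG3), exhibiting $s$ below $t$ in the natural order; conversely, if $s = t \otimes e$ with $e \in G_{o}$ then the pseudoproduct reduces to a single restriction $(t \, | \, f) \leq t$, so $s \leq t$. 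Finally, when ${\bf d}(s) = {\bf r}(t)$ the meet equals ${\bf d}(s)$ and both restrictions are trivial, giving $s \otimes t = st$, which matches the restricted product. Since underlying set, endpoint maps, order, and partial product all agree, $\mathbf{G}(\mathbf{S}(G,\cdot,\leq)) = (G,\cdot,\leq)$.

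Part (3) is then immediate from Lemma~2.1. In $\mathbf{G}(S)$ the restrictions are computed by Lemma~2.1(1),(2) as $(s \, | \, e) = se$ and $(e \, | \, t) = et$, so for $e = s^{-1}s\,tt^{-1} = {\bf d}(s) \wedge {\bf r}(t)$ we obtain $s \otimes t = (se)(et)$, which is exactly $st$ by Lemma~2.1(3). Hence $\otimes$ reproduces the original multiplication, and $\mathbf{S}(\mathbf{G}(S)) = S$.
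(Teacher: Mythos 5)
Your proposal is correct and takes essentially the same route as the paper: Lemma~2.8 for associativity, groupoid inverses for regularity, identification of the $\otimes$-idempotents with the identities of $G$ (whose pseudoproducts are meets) for commutativity, then matching the natural partial order and restricted product of $(G,\otimes)$ with $\leq$ and $\cdot$ for part (2), and Lemma~2.1 for part (3). The only difference is that you work out in detail the step the paper dismisses as ``easy to check''---that $x \otimes x = x$ forces $x \in G_{o}$---and your argument for it (comparing endpoints, using uniqueness in (OG3) and (OG3)$^{\ast}$ to get $(x\,|\,e) = x = (e\,|\,x)$, then cancelling in the local group) is valid.
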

\begin{proof}
(1) By Lemma~2.8, $(G,\otimes )$ is a semigroup.  
If $x,y \in  G$ and $\exists x\cdot y$ in the groupoid $G$ 
then $x\cdot y = x \otimes  y$.  
But for each element $x \in  G$ we have that 
$x = x\cdot x^{-1}\cdot x$ and $x^{-1} = x^{-1}\cdot x\cdot x^{-1}$.  
Thus $(G,\otimes )$ is a regular semigroup.  
It is easy to check that the idempotents of $(G,\otimes )$ 
are precisely the identities of $(G,\cdot )$.  
Let $e$ and $f$ be two idempotents of $(G,\otimes )$.  
Then
$$e \otimes  f = (e \,|\, e \wedge f)(e \wedge f \,| \,f ) 
= e \wedge f 
= (f \,|\, e \wedge f) (e \wedge  f \,|\, e) = f \otimes e$$
so that the idempotents commute.
It follows that $(G,\otimes )$ is an inverse semigroup. 

(2) We show first that the natural partial order on $(G,\otimes )$ 
is just $\leq$.  
Suppose that $x = e \otimes  y$ in $(G,\otimes )$ for some idempotent $e$.  
Then $x = (e \wedge  {\bf r}(y)\, | \, y)$ and so 
$x \leq  y$ in $(G,\cdot ,\leq)$.  
Conversely, suppose that $x \leq  y$ in $(G,\cdot ,\leq)$.  
Then $x = ({\bf r}(x)\, | \, y)$.  
But $({\bf r}(x)\, | \, y) = {\bf r}(x) \otimes  y$.  
Thus $x \leq  y$ in $(G,\otimes )$.  
Now we turn to the restricted product.  
The restricted product of $x$ and $y$ is defined in $(G,\otimes )$ 
precisely when $x^{-1} \otimes  x = y \otimes  y^{-1}$. 
But from the properties of the pseudoproduct,
we have that $x^{-1} \otimes  x = x^{-1}\cdot x$
and $y \otimes  y^{-1} = y\cdot y^{-1}$ in $(G,\cdot)$.
Thus the restricted product of $x$ and $y$ exists in $(G,\otimes)$
precisely when the product $x\cdot y$ exists in $(G,\cdot )$.  
Thus ${\bf G}({\bf S}(G,\cdot ,\leq )) = (G,\cdot,\leq )$. 

(3) The pseudoproduct in ${\bf G}(S)$ is given by
$$s \otimes  t = (s\, | \, e)\cdot (e\, | \, t),$$
\noindent
where $e = {\bf d}(s) \wedge  {\bf r}(t)$ and the product on the 
right is the restricted product in $S$.  
But $(s\, | \, e) = se$ and $(e\, | \, t) = et$ and $e = s^{-1}stt^{-1}$ 
by Lemma~2.1.
Thus $s \otimes  t = st$.  
Hence ${\bf S}({\bf G}(S)) = S$.
\end{proof}

\begin{theorem}[Ehresmann-Schein-Nambooripad] The category of inverse semigroups and pre\-homo\-morph\-isms 
(respectively, homo\-morph\-isms) is isomorphic to the category of 
inductive groupoids and ordered functors (respectively, inductive functors).
\end{theorem}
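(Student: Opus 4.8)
The plan is to exhibit ${\bf G}$ and ${\bf S}$ as a pair of mutually inverse functors between the two categories, observing throughout that neither construction alters the underlying set or the underlying function of a morphism. On objects the work is already done: Proposition~2.9 establishes that ${\bf G}$ and ${\bf S}$ are mutually inverse bijections between inverse semigroups and inductive groupoids, since ${\bf S}({\bf G}(S)) = S$ and ${\bf G}({\bf S}(G,\cdot,\leq)) = (G,\cdot,\leq)$. So the entire content of the theorem lies in the behaviour on morphisms, and the strategy is to show that a prehomomorphism and its associated ordered functor are, as set maps, one and the same.

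First I would treat the prehomomorphism / ordered functor correspondence. Given a prehomomorphism $\theta \colon S \rightarrow T$, Lemma~2.2(1) tells us that $\theta$ preserves the restricted product and the natural partial order. To regard $\theta$ as an ordered functor ${\bf G}(S) \rightarrow {\bf G}(T)$ I must check the functor axioms: that $\theta$ sends identities to identities and commutes with ${\bf d}$ and ${\bf r}$. These are exactly the facts extracted in the proof of Lemma~2.2(1), namely that $\theta$ maps idempotents to idempotents, that $\theta(s^{-1}) = \theta(s)^{-1}$, and that $\theta(s^{-1}s) = \theta(s)^{-1}\theta(s)$ and $\theta(ss^{-1}) = \theta(s)\theta(s)^{-1}$; together with preservation of the restricted product these are precisely the requirements for a groupoid functor, and order-preservation makes it ordered. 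Conversely, an ordered functor $F \colon {\bf G}(S) \rightarrow {\bf G}(T)$ preserves composition (the restricted product) by functoriality and the order by hypothesis, so its underlying map is a prehomomorphism by the other direction of Lemma~2.2(1). I would therefore define ${\bf G}(\theta) = \theta$ and ${\bf S}(F) = F$ at the level of underlying functions. For the ${\bf S}$ direction it is worth noting that Proposition~2.9(2) identifies the restricted product and order of ${\bf S}(G)$ with the original $\cdot$ and $\leq$ of $G$, so that an ordered functor on $(G,\cdot,\leq)$ automatically preserves the restricted product and order of ${\bf S}(G)$ and hence is a prehomomorphism.

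Next I would handle the restricted correspondence between homomorphisms and inductive functors. An inductive functor is an ordered functor that preserves the meet on identities; since the identities of ${\bf G}(S)$ are the idempotents of $S$ and the meet of idempotents $e,f$ is their product $ef$, the inductive condition $\theta(e \wedge f) = \theta(e) \wedge \theta(f)$ reads $\theta(ef) = \theta(e)\theta(f)$ once one recalls that the product of two idempotents in an inverse semigroup equals their meet. By Lemma~2.2(2) this is exactly the extra condition that upgrades a prehomomorphism to a homomorphism. Hence the correspondence of the first case restricts to one between homomorphisms and inductive functors.

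Finally, functoriality and the isomorphism of categories are immediate. Since ${\bf G}$ and ${\bf S}$ leave the underlying function of each morphism unchanged, they visibly preserve composition and identity morphisms, and the two object bijections of Proposition~2.9 combine with the morphism identifications to show that ${\bf G} \circ {\bf S}$ and ${\bf S} \circ {\bf G}$ are the respective identity functors. The main obstacle is not any single hard computation but the careful bookkeeping of the second paragraph: one must be sure that the full definition of a groupoid functor (preservation of identities, ${\bf d}$, ${\bf r}$, and composition) is met by a prehomomorphism, and that no condition is silently lost or added when passing between the two descriptions. All of the required verifications have, however, already been isolated in Lemmas~2.1, 2.2 and Proposition~2.9, so the proof is essentially an assembly of these pieces.
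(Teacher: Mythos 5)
Your proposal is correct and follows essentially the same route as the paper: both use Proposition~2.9 for the object-level bijections, define ${\bf G}$ and ${\bf S}$ as the identity on underlying functions, invoke Lemma~2.2 to convert between prehomomorphisms and ordered functors, and use Lemma~2.2(2) together with the identification of the meet of idempotents with their product to restrict to homomorphisms and inductive functors. Your write-up is in fact somewhat more explicit than the paper's about why a prehomomorphism satisfies all the groupoid-functor axioms, but the substance is identical.
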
 
\begin{proof}

Define a function ${\bf G}$ from the category 
of inverse semigroups and prehomomorphisms 
to the category of inductive groupoids and ordered functors 
as follows:
for each inverse semigroup $S$ we define
${\bf G}(S) = (S,\cdot,\leq )$, an inductive groupoid by Proposition~2.6,
and if $\theta \colon \: S \rightarrow T$ is a prehomomorphism 
then ${\bf G}(\theta ) \colon {\bf G}(S) \rightarrow  {\bf G}(T)$ 
is defined to be the same function on the underlying sets;  
this is an ordered functor by Lemma~2.2.  
It is easy to check that ${\bf G}$ defines a functor.

Define a function ${\bf S}$ from the category of inductive
groupoids and ordered functors to the category of inverse
semigroups and prehomomorphisms as follows:
for each inductive groupoid $G$ we define
${\bf S}(G) = (G,\otimes )$, an inverse semigroup by Proposition~2.9,
and if $\phi \colon \:G \rightarrow H$ 
is an ordered functor between inductive groupoids 
then ${\bf S}(\phi ) \colon {\bf S}(G) \rightarrow  {\bf S}(H)$ 
is defined to be the same function on the underlying sets;  
this is a prehomomorphism by Lemma~2.2 
since it preserves the restricted product 
of ${\bf S}(G)$ and the natural partial order.  
It is easy to check that ${\bf S}$ is a functor.  

By Proposition~2.9, we have that 
${\bf G}({\bf S}(G,\cdot,\leq )) 
= (G,\cdot,\leq )$ and ${\bf S}({\bf G}(S)) = S$.  
It is now immediate that the category of inverse semigroups and
prehomomorphisms is isomorphic to the category of inductive
groupoids and ordered functors.
By Lemma~2.2 this isomorphism restricts to an  
isomorphism between the category of inverse semigroups and
semigroup homomorphisms and the category of inductive groupoids
and inductive functors.
\end{proof}

The above theorem can be viewed as a wide-ranging generalization of the result that a commutative idempotent semigroup can also be regarded as a meet semilattice.
One of its uses is in proving that a structure suspected of being an inverse semigroup actually is.

As usual in semigroup theory we have the slight annoyance of having to deal with the case of semigroups with and without zero separately.
An ordered groupoid is said to be {\em $\ast$-inductive}
if the following condition holds for each pair of identities:
if they have a lower bound, they have a greatest lower bound.
A $\ast$-inductive groupoid gives rise to an inverse semigroup with zero $(G^{0},\otimes)$:
adjoin a zero to the set $G$, and extend the pseudoproduct on $G$ to $G^{0}$ in such a way
that if $s,t \in G$ and $s \otimes t$ is not defined then put $s \otimes t = 0$,
and define all products with $0$ to be $0$.
Every inverse semigroup with zero arises in this way.

\subsection{Applications}

Ordered groupoids can be viewed as wide-ranging generalizations of inverse semigroups.
`Wide-ranging' because both groupoids and partially ordered sets are examples of ordered groupoids.
The category of ordered groupoids provides much more space for working with inverse semigroups.
In particular, constructions that lead out of the category of inverse semigroups may actually be possible in the larger category of ordered groupoids.
In this section, I shall illustrate this idea.

Let $G$ be an ordered subgroupoid of the ordered groupoid $H$.  
We say that $H$ is an {\em enlargement}
of $G$ if the following
three axioms hold:

\begin{description}

\item[{\rm (GE1)}] $G_{o}$ is an order ideal of $H_{o}$.

\item[{\rm (GE2)}] If $x \in H$ and ${\bf d}(x),{\bf r}(x) \in G$ 
then $x \in G$.

\item[{\rm (GE3)}] If $e \in H_{o}$ then there exists $x \in H$
such that ${\bf r}(x) = e$ and ${\bf d}(x) \in G$.

\end{description}

The following is proved in Section~8.3 of \cite{Lawson98}.

\begin{theorem}[The maximum enlargement theorem] \mbox{}

\begin{enumerate}

\item  Let $p \colon \: H \rightarrow K$ be an ordered, 
star injective functor between ordered groupoids.
Then there is an ordered groupoid $G$, 
an ordered embedding $i \colon \: H \rightarrow G$,
and an ordered covering functor $p'\colon \: G \rightarrow K$
such that $p'i = p$ where $G$ is an enlargement of $i(H)$.

\item Let $j \colon \: H \rightarrow G'$ be any ordered embedding
and let $p'' \colon \: G' \rightarrow K$ be an ordered covering
functor such that $p''j = p$.
Then there is a unique ordered functor $\theta \colon \: G \rightarrow G'$
such that $\theta i = j$ and $p'' \theta = p'$.

\end{enumerate}
\end{theorem}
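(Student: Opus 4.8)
The plan is to build $G$ as an \emph{action groupoid}, using the fact that covering functors onto $K$ correspond to actions of the groupoid $K$. I first construct the set on which $K$ acts. Consider pairs $(e,k)$ with $e \in H_{o}$ and $k \in K$ satisfying $\mathbf{r}(k) = p(e)$, and set $(e,k) \sim (e',k')$ whenever there is some $h \in H$ with $\mathbf{d}(h) = e$, $\mathbf{r}(h) = e'$ and $k' = p(h)k$; write $[e,k]$ for the class, regarded as an object lying over $\mathbf{d}(k) \in K_{o}$, and let $F$ be the set of all such classes. The groupoid $K$ acts on $F$ by $\kappa \cdot [e,k] = [e, k\kappa^{-1}]$, defined when $\mathbf{d}(\kappa) = \mathbf{d}(k)$, and I take $G = K \ltimes F$ to be the associated action groupoid: its arrows are the pairs $(\kappa, [e,k])$ with $\mathbf{d}(\kappa) = \mathbf{d}(k)$, running from $[e,k]$ to $[e, k\kappa^{-1}]$, with $p'(\kappa, [e,k]) = \kappa$. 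By construction $p'$ is a star-bijective functor, that is, a covering. I would define the embedding on objects by $i(e) = [e, p(e)]$ and on arrows by $i(x) = (p(x), [\mathbf{d}(x), p(\mathbf{d}(x))])$, so that $p'i = p$ holds at once; a short check (taking $h = x$ in the defining relation) shows $i(x)$ does run from $i(\mathbf{d}(x))$ to $i(\mathbf{r}(x))$. The one place the hypothesis is essential is the injectivity of $i$: from $i(x) = i(x')$ one reads off $p(x) = p(x')$ and $\mathbf{d}(x) = \mathbf{d}(x')$, and these force $x = x'$ precisely because $p$ is star injective.

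Next I would equip $G$ with an order making $i$ an ordered embedding and $p'$ an ordered covering functor, the order on objects being induced from the orders on $H_{o}$ and $K$ and then extended to arrows through the $K$-component. To verify that this really is an ordered groupoid I would use the reformulation of Lemma~2.5, checking (OG1), (OG2), (OI) and (OG4) rather than the restriction axioms directly. Both the well-definedness of the order on $\sim$-classes and these four axioms rest on $p$ being order preserving and on the calculus of restrictions and corestrictions from Lemma~2.4. With the order in hand, the three enlargement axioms are quick: (GE1) follows because $[f,l] \leq [e, p(e)]$ forces $l$ to be an identity by the order ideal property (Lemma~2.4(7)), so that $[f,l] = i(f)$; (GE2) because an arrow with both ends in $i(H)$ is seen to have the form $i(h)$; and (GE3) because the arrow $(k, [e,k])$ runs from the arbitrary object $[e,k]$ to $i(e) \in i(H)$.

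For the universal property I would define $\theta$ by unique lifting along the covering $p''$. Given an object $[e,k]$ of $G$, the arrow $k$ of $K$ lifts, since $p''$ is a covering, to a unique arrow of $G'$ with range $j(e)$ lying over $k$; I set $\theta([e,k])$ to be its domain, and extend $\theta$ to arrows of $G$ by lifting each $\kappa$ to the unique arrow of $G'$ over $\kappa$ with the prescribed domain. That $\theta$ is well defined on $\sim$-classes uses $p''j = p$ together with uniqueness of lifts; that $\theta i = j$ and $p''\theta = p'$ hold, and that $\theta$ is an ordered functor, follow from the construction and from $j$ and $p''$ being ordered. Uniqueness of $\theta$ is where the enlargement structure pays off: any competing functor must agree with $j$ on $i(H)$, and by (GE3) every object and arrow of $G$ is presented through a single $K$-arrow to $i(H)$, so uniqueness of lifting along $p''$ forces the two functors to coincide.

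The genuinely delicate part is not the groupoid skeleton but the order. The covering property, the unique-lifting arguments and the enlargement axioms are essentially formal once the action groupoid is in place; by contrast, pinning down the order on $G$ so that $p'$ becomes an \emph{ordered} covering and $\theta$ an \emph{ordered} functor --- and checking that it is well defined on equivalence classes and satisfies the axioms of Lemma~2.5 --- is where the work lies, and it is there that Lemma~2.4 and the hypothesis that $p$ is order preserving are used repeatedly. This is also why the full argument is long enough to be deferred to \cite{Lawson98}.
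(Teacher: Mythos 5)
Your groupoid-level construction is essentially the standard one (the paper itself gives no proof, deferring to Section~8.3 of \cite{Lawson98}, and the construction there is of the same covering-space type): the classes $[e,k]$, the $K$-action, the action groupoid $G = K \ltimes F$ with projection $p'$, the embedding $i$, and the unique-lifting definition of $\theta$ are all sound. The details you do check are correct — $i(x)$ runs from $i(\mathbf{d}(x))$ to $i(\mathbf{r}(x))$ via $h = x$, star injectivity of $p$ gives injectivity of $i$ (though note that $i(x)=i(x')$ only yields some $h \colon \mathbf{d}(x) \rightarrow \mathbf{d}(x')$ with $p(h)$ an identity, and you need star injectivity once already to conclude $h$ is an identity before using it a second time to get $x = x'$), and well-definedness of $\theta$ on $\sim$-classes follows from $p''j = p$ and uniqueness of lifts.

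The genuine gap is the one you name and then skip: the order on $G$ is never defined, and the statement being proved is entirely order-theoretic. $G$ must be an \emph{ordered} groupoid, $i$ an \emph{ordered embedding}, $p'$ an \emph{ordered covering}, axiom (GE1) demands that $i(H)_{o}$ be an \emph{order ideal}, and $\theta$ must be \emph{ordered}; "induced from the orders on $H_{o}$ and $K$ and extended through the $K$-component" is a promise, not a construction. Nor is this a routine detail that can be waved at Lemmas~2.4 and~2.5: the objects of $G$ are $\sim$-classes, and a relation induced on a quotient from an order on representatives need not be well defined, transitive, or antisymmetric — each of these requires an argument, and star injectivity of $p$ enters again in proving antisymmetry. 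Your own treatment of (GE1) shows the danger concretely: you claim $[f,l] \leq [e,p(e)]$ "forces $l$ to be an identity by Lemma~2.4(7)", but a general representative of $i(e)$ has the form $(e_{1},p(h))$ with $h \colon e \rightarrow e_{1}$, so its $K$-component is a non-identity arrow; whether the inference you want is available depends entirely on how comparisons between classes are defined (for instance, existence of \emph{some} comparable pair of representatives only gives $l_{1} \leq p(h)$, and one must then invoke Lemma~2.3 on corestrictions to land back in $i(H_{o})$). Until the order is written down, shown to be a partial order, and shown to satisfy the axioms of Lemma~2.5, the enlargement axioms, the claim that $p'$ is an ordered covering, and the claim that $\theta$ is an ordered functor are all unsupported — and that missing layer is the actual content of the theorem.
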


The key point of the above theorem is that every star injective ordered functor can be factorized as an enlargement followed by a star bijective ordered functor,
and that this can be done in essentially one way.
Star bijective or covering functors have pleasant properties,
and if an ordered groupoid is an enlargement of another ordered groupoid then it is similar to it in structure.

We shall apply the above theorem to determine the structure of the $E$-unitary inverse semigroups introduced in Chapter~1.
It hinges on two observations.
First, in Theorem~2.26, we proved that an inverse semigroup is $E$-unitary if and only if the natural homomorphism
to its maximum group image is star injective;
second, in Theorem~3.6 an inverse semigroup is a semidirect product of a semilattice by a group if and only if that selfsame natural map is star bijective.

We begin by defining semidirect products of partially ordered sets by groups.
Let $G$ be a group and $X$ a partially ordered set.
We suppose that $G$ acts on $X$ by order automorphisms on the left.
Define a partial multiplication on $X \times G$ by
$(x,g)(y,h) = (x,gh)$ if $x = g \cdot y$ and undefined otherwise
and define a partial order on $X \times G$ by
$$(x,g) \leq (y,h) \Leftrightarrow x \leq y \mbox{ and } g = h.$$
The set $X \times G$ equipped with this partial multiplication
and partial order is denoted by $P(G,X)$.
The following is proved as Theorem~1 of Section~8.1 of \cite{Lawson98}.

\begin{lemma} $P(G,X)$ is an ordered groupoid, 
and the function $\pi_{2} \colon \: \: P(G,X) \rightarrow G$ 
defined by $\pi_{2}(x,g) = g$ is a surjective, ordered covering functor.
\end{lemma}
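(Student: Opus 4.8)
The plan is to unwind the groupoid structure hidden in the partial multiplication, verify the ordered-groupoid axioms through the economical characterization of Lemma~2.6, and then check the three properties demanded of $\pi_{2}$. First I would identify the structure maps. From $(x,g)(y,h) = (x,gh)$ one sees that a right identity of $(x,g)$ forces $h = 1$ and $x = g\cdot y$, while a left identity forces the first coordinate to be fixed; hence $\mathbf{d}(x,g) = (g^{-1}\cdot x,1)$ and $\mathbf{r}(x,g) = (x,1)$, so the identities are exactly the pairs $(x,1)$ with $x \in X$ and $G_{o}$ is a copy of $X$. A direct check gives $(x,g)^{-1} = (g^{-1}\cdot x,g^{-1})$, since the two composites return $(x,1)$ and $(g^{-1}\cdot x,1)$ as required. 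Associativity when both sides are defined is a one-line computation: $x = g\cdot y$ and $y = h\cdot z$ give $x = gh\cdot z$, so each bracketing yields $(x,ghk)$. Thus $(P(G,X),\cdot)$ is a groupoid.

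Next I would apply Lemma~2.6, verifying (OG1), (OG2), (OI) and (OG4). Axiom (OG2) is immediate: if $(x_{1},g)\leq(y_{1},g)$ and $(x_{2},h)\leq(y_{2},h)$ with both products defined, then both products have second coordinate $gh$ and first coordinates $x_{1}\leq y_{1}$, so $(x_{1},gh)\leq(y_{1},gh)$. For (OI), any element below an identity $(x,1)$ has second coordinate $1$ and so is itself an identity. The two places where the hypothesis that $G$ acts by \emph{order} automorphisms is essential are (OG1) and (OG4): for (OG1), $(x,g)\leq(y,g)$ gives $g^{-1}\cdot x \leq g^{-1}\cdot y$, whence $(g^{-1}\cdot x,g^{-1})\leq(g^{-1}\cdot y,g^{-1})$; for (OG4), given $(e,1)\leq\mathbf{d}(x,g)=(g^{-1}\cdot x,1)$, i.e. $e\leq g^{-1}\cdot x$, the element $(g\cdot e,g)$ lies below $(x,g)$ precisely because $g\cdot e \leq g\cdot(g^{-1}\cdot x)=x$, and it has domain $(e,1)$. (Equivalently one reads off the restriction $((x,g)\,|\,(e,1)) = (g\cdot e,g)$ directly, confirming (OG3).) By Lemma~2.6, $P(G,X)$ is an ordered groupoid.

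Finally I would treat $\pi_{2}$, viewing $G$ as a one-object ordered groupoid whose order is equality, as sanctioned by the remark that the order on a group degenerates to equality. That $\pi_{2}$ is a functor is clear, since $\pi_{2}((x,g)(y,h)) = gh = \pi_{2}(x,g)\pi_{2}(y,h)$ whenever the left product is defined and $\pi_{2}(x,1)=1$; it is order-preserving because $(x,g)\leq(y,h)$ forces $g=h$; and it is surjective because (for $X$ nonempty) $\pi_{2}(x,g)=g$ for any $x$. For the covering (star bijective) property, fix an object $(e,1)$: the arrows issuing from it are $\{(g\cdot e,g):g\in G\}$, which $\pi_{2}$ sends bijectively onto $G$, the whole star of the single object of $G$ (the range-star $\{(e,g):g\in G\}$ gives the same conclusion). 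Hence $\pi_{2}$ is a surjective ordered covering functor.

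The main obstacle is not any single difficult step but correctly pinning down $\mathbf{d}$, $\mathbf{r}$ and the inverse at the outset; once these are right, every axiom collapses to a short computation, and the only genuinely substantive input—entering in (OG1) and (OG4)—is that each $g$ acts as an order automorphism rather than a mere bijection.
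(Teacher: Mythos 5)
Your proposal is mathematically correct, but note that the paper itself gives no proof of this lemma at all: it simply cites Theorem~1 of Section~8.1 of the book \cite{Lawson98}. So your argument cannot be compared against an in-paper proof; rather, it supplies one, and it does so efficiently using the paper's own machinery. Your identification of the structure maps ($\mathbf{d}(x,g) = (g^{-1}\cdot x,1)$, $\mathbf{r}(x,g) = (x,1)$, $(x,g)^{-1} = (g^{-1}\cdot x, g^{-1})$) is right, the verification of the four axioms is sound, and you correctly isolate the only two places where the hypothesis of \emph{order} automorphisms (rather than mere bijections) enters, namely (OG1) and (OG4); the star-bijectivity computation for $\pi_2$ is also correct under either convention for the star. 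Two small repairs before this could stand as a proof in the text: the ``economical characterization'' you invoke is Lemma~2.5 of the paper, not Lemma~2.6 (Proposition~2.6 is the statement that an inverse semigroup yields an inductive groupoid); and your parenthetical caveat that surjectivity of $\pi_2$ needs $X \neq \emptyset$ is a genuine (if trivial) hypothesis suppressed in the statement, so it is worth saying explicitly rather than in passing. Neither point affects the substance: the proof is complete and, if anything, more self-contained than the paper, which outsources the entire argument to an external reference.
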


We say that the ordered groupoid 
$P(G,X)$ is a {\em semidirect product of a partially ordered set by a group}.
Such semidirect product ordered groupoids can be characterized abstractly.
The proof of the following is Theorem~3 of Section~8.1 of \cite{Lawson98}.

\begin{proposition} 
Let $\pi \colon \: \Pi \rightarrow G$ 
be an ordered covering functor from the ordered groupoid $\Pi$
onto a group $G$.  
Then $G$ acts on the poset $X = \Pi_{o}$
by order automorphisms, and there is an isomorphism of ordered groupoids
$\theta \colon \: \Pi \rightarrow P(G,X)$ 
such that $\pi_{2}\theta = \pi$.
\end{proposition}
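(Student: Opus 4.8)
The plan is to extract the group action directly from the covering property of $\pi$ and then use it to identify $\Pi$ with $X \times G$. Since $G$ is a one-object groupoid, $\pi$ sends every identity of $\Pi$ to the group identity $1$, and star bijectivity says that for each $e \in \Pi_{o}$ the functor $\pi$ restricts to a bijection from $\{x : {\bf d}(x) = e\}$ onto $G$. I would therefore define, for $g \in G$ and $e \in \Pi_{o}$, the element $g \cdot e := {\bf r}(x)$, where $x$ is the unique arrow with ${\bf d}(x) = e$ and $\pi(x) = g$. Checking that this is a left action is then a uniqueness argument: $1 \cdot e = e$ because $e$ itself lies over $1$, and for $(gh)\cdot e = g \cdot (h \cdot e)$ I would take the arrow $x$ over $h$ with ${\bf d}(x)=e$ and the arrow $y$ over $g$ with ${\bf d}(y) = {\bf r}(x) = h \cdot e$, observe that $yx$ is defined and lies over $gh$, and conclude by uniqueness that $(gh) \cdot e = {\bf r}(yx) = {\bf r}(y) = g \cdot (h \cdot e)$.

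To see the action is by order automorphisms, suppose $e \leq f$ and let $x$ be the arrow over $g$ with ${\bf d}(x) = f$, so $g \cdot f = {\bf r}(x)$. Since $e \leq f = {\bf d}(x)$, the restriction $(x \,|\, e)$ exists by (OG3); because $\pi$ is an ordered functor and the order on the group $G$ is trivial (Lemma~2.4(2)), we get $\pi(x \,|\, e) = \pi(x) = g$, so by uniqueness $(x \,|\, e)$ is exactly the arrow defining $g \cdot e$. Then $(x \,|\, e) \leq x$ forces $g \cdot e = {\bf r}(x \,|\, e) \leq {\bf r}(x) = g \cdot f$ by Lemma~2.4(1). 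Thus each map $g \cdot (-)$ is order-preserving, and since $g^{-1} \cdot (-)$ is its two-sided inverse, it is an order automorphism.

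For the isomorphism I would set $\theta(s) = ({\bf r}(s), \pi(s))$, which manifestly satisfies $\pi_{2} \theta = \pi$. Verifying that $\theta$ is an ordered functor is routine: it sends identities to identities; the identity ${\bf d}(s) = \pi(s)^{-1}\cdot {\bf r}(s)$, witnessed by $s^{-1}$, shows $\theta$ respects ${\bf d}$ and ${\bf r}$; multiplicativity follows because ${\bf d}(s) = {\bf r}(t)$ makes $s$ itself witness ${\bf r}(s) = \pi(s)\cdot{\bf r}(t)$, so the $P(G,X)$-product $\theta(s)\theta(t) = ({\bf r}(s), \pi(s)\pi(t))$ is defined and equals $\theta(st)$; and order-preservation uses $s \leq t \Rightarrow {\bf r}(s) \leq {\bf r}(t)$ together with $\pi(s) = \pi(t)$ (trivial order on $G$ again). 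Bijectivity of $\theta$ is just a restatement of star bijectivity applied to costars: injectivity because two arrows agreeing in range and in $\pi$-image have equal inverses by star injectivity and so coincide, and surjectivity because each pair $(e,g)$ is hit by the unique arrow into $e$ lying over $g$.

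The step I expect to carry the real weight is showing that $\theta^{-1}$ is again an ordered functor, i.e.\ that $\theta$ reflects the order. Given $\theta(s) \leq \theta(t)$, I have ${\bf r}(s) \leq {\bf r}(t)$ and $\pi(s) = \pi(t)$, and the natural move is to form the corestriction $({\bf r}(s) \,|\, t)$, which lies below $t$, has range ${\bf r}(s)$, and, by the degeneracy of the order on $G$, lies over $\pi(t) = \pi(s)$; star injectivity then identifies it with $s$, giving $s = ({\bf r}(s)\,|\,t) \leq t$. This reverse implication is exactly where the covering hypothesis is indispensable, and it is the one point where star injectivity, the corestriction axiom (OG3)$^{\ast}$, and the triviality of the order on $G$ must all line up. Once this is in hand, $\theta$ is a bijective ordered functor that reflects order, hence an isomorphism of ordered groupoids, completing the proof.
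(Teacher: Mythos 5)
Your proof is correct, and it is essentially the expected argument: the paper itself gives no proof of this proposition (it defers to Theorem~3 of Section~8.1 of \cite{Lawson98}), and your construction --- defining the action by $g \cdot e = \mathbf{r}(x)$ for the unique $x$ with $\mathbf{d}(x) = e$ lying over $g$ (star bijectivity), setting $\theta(s) = (\mathbf{r}(s),\pi(s))$, and obtaining order reflection from the corestriction $(\mathbf{r}(s)\,|\,t)$ together with star injectivity and the triviality of the order on $G$ --- is precisely the standard proof given there. All the verifications (the action axioms, order automorphisms, functoriality, bijectivity, and $\pi_{2}\theta = \pi$) are carried out correctly.
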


The key point of Lemma~2.12 and Proposition~2.13 is that we can recognize when an ordered groupoid
is a semidirect product of a partially ordererd set by a group by checking to see if it admits
an ordered covering functor onto as group.
This is the ordered groupoid version of Theorem~3.6 of Chapter~1.

Now let $S$ be an $E$-unitary inverse semigroup.
Then the natural map from $S$ to its maximum group image is star injective.
This can be factorized using the maximum enlargment theorem into an enlargement followed by a covering map to a group.
It follows that every $E$-unitary inverse semigroup has an enlargement which is a semidirect product of a partially ordered set and a group.
For the full proof of the following see Theorem~4 of Section~8.1 of \cite{Lawson98}.

\begin{theorem} Let $S$ be an inverse semigroup with associated 
inductive groupoid ${\bf G}(S)$.  
Then $S$ is $E$-unitary if, and only if, 
there is an ordered embedding 
$\iota \colon \: {\bf G}(S) \rightarrow P(G,X)$
into some semidirect product of a poset by a group
such that $P(G,X)$ is an enlargement of $\iota ({\bf G}(S))$
in  such a way that the function
$\pi_{2} \colon \: P(G,X) \rightarrow G$ restricted to $\iota({\bf G}(S))$
is surjective.
\end{theorem}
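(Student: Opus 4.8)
The plan is to treat the two implications separately, using the maximum enlargement theorem (Theorem~2.11) together with the structural description of covering functors onto groups (Proposition~2.13) for the forward implication, and a short functorial argument for the converse. Throughout I will use that the inductive groupoid of a group is the group itself with the trivial (equality) order, as noted in the remarks following the definition of an inductive groupoid, and the cited fact (Theorem~2.26) that $S$ is $E$-unitary precisely when the natural map $\sigma^{\natural} \colon S \to G$ to its maximum group image $G = S/\sigma$ is star injective.

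For the forward implication, suppose $S$ is $E$-unitary. Then $\sigma^{\natural}$ is a surjective homomorphism onto the group $G$, and the induced ordered functor ${\bf G}(\sigma^{\natural}) \colon {\bf G}(S) \to G$ is star injective by Theorem~2.26. First I would apply the maximum enlargement theorem to this ordered, star injective functor: this produces an ordered groupoid $\Pi$, an ordered embedding $i \colon {\bf G}(S) \to \Pi$ and an ordered covering functor $p' \colon \Pi \to G$ with $p' i = {\bf G}(\sigma^{\natural})$, where $\Pi$ is an enlargement of $i({\bf G}(S))$. Since $p'$ is an ordered covering functor onto the group $G$, Proposition~2.13 supplies a poset $X = \Pi_{o}$ carrying a $G$-action by order automorphisms and an isomorphism of ordered groupoids $\theta \colon \Pi \to P(G,X)$ with $\pi_{2}\theta = p'$. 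Setting $\iota = \theta i$ gives the required ordered embedding ${\bf G}(S) \to P(G,X)$. The remaining verifications are transport-of-structure arguments: because $\theta$ is an isomorphism of ordered groupoids and $\Pi$ is an enlargement of $i({\bf G}(S))$, its image $P(G,X) = \theta(\Pi)$ is an enlargement of $\theta(i({\bf G}(S))) = \iota({\bf G}(S))$; and $\pi_{2}\iota = \pi_{2}\theta i = p' i = {\bf G}(\sigma^{\natural})$, which agrees with the surjection $\sigma^{\natural}$ on underlying sets, so $\pi_{2}$ restricted to $\iota({\bf G}(S))$ is surjective.

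For the converse, suppose such an $\iota \colon {\bf G}(S) \to P(G,X)$ is given. By Lemma~2.12 the functor $\pi_{2} \colon P(G,X) \to G$ is an ordered covering functor, hence star injective, and $\iota$ is injective. I would first observe that the composite $\pi_{2}\iota \colon {\bf G}(S) \to G$ is therefore star injective: if ${\bf d}(x) = {\bf d}(y)$ and $\pi_{2}\iota(x) = \pi_{2}\iota(y)$, then ${\bf d}(\iota x) = {\bf d}(\iota y)$ and star injectivity of $\pi_{2}$ forces $\iota x = \iota y$, whence $x = y$. Passing to the semigroup side, $\pi_{2}\iota$ corresponds to a prehomomorphism $\psi \colon S \to G$; since the order on the group $G$ is trivial, $\psi(st) \leq \psi(s)\psi(t)$ is an equality, so $\psi$ is in fact a homomorphism into the group $G$. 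By the universal property of the maximum group image, $\psi$ factors as $\psi = \bar{\psi}\,\sigma^{\natural}$ for a homomorphism $\bar{\psi} \colon S/\sigma \to G$. Star injectivity of ${\bf G}(\psi) = \pi_{2}\iota$ then forces star injectivity of ${\bf G}(\sigma^{\natural})$: if ${\bf d}(x) = {\bf d}(y)$ and $\sigma^{\natural}(x) = \sigma^{\natural}(y)$, then $\psi(x) = \psi(y)$ and the previous observation gives $x = y$. Theorem~2.26 now yields that $S$ is $E$-unitary.

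I expect the main obstacle to lie entirely in the forward direction, specifically in correctly chaining the maximum enlargement theorem with Proposition~2.13 so that all the compatibility equations ($p'i = {\bf G}(\sigma^{\natural})$, $\pi_{2}\theta = p'$, and preservation of the enlargement property under the isomorphism $\theta$) align to give simultaneously that $P(G,X)$ is an enlargement of $\iota({\bf G}(S))$ and that $\pi_{2}$ restricted to $\iota({\bf G}(S))$ is surjective. The converse, by contrast, is a formal consequence of the star injectivity of covering functors and the universal property of $\sigma$, and uses neither the enlargement hypothesis nor the heavy machinery.
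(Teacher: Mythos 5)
Your proposal is correct and follows essentially the same route as the paper: the forward direction is exactly the paper's argument (star injectivity of the natural map $\sigma^{\natural}$ to the maximum group image via Theorem~2.26, factorization through the maximum enlargement theorem, then Proposition~2.13 to identify the resulting enlargement with a semidirect product $P(G,X)$, with surjectivity of $\pi_{2}$ on $\iota({\bf G}(S))$ coming from $\pi_{2}\iota = {\bf G}(\sigma^{\natural})$). The paper itself only sketches this direction and defers the full proof, including the converse, to Theorem~4 of Section~8.1 of \cite{Lawson98}; your converse --- composing the covering functor $\pi_{2}$ with the embedding $\iota$ to get a star injective ordered functor to $G$, noting that the order on a group is trivial so the corresponding prehomomorphism is a homomorphism, factoring it through $\sigma^{\natural}$, and deducing star injectivity of $\sigma^{\natural}$ --- is a correct and clean way to finish, and, as you observe, it needs neither the enlargement hypothesis nor the surjectivity condition.
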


The above result characterizes $E$-unitary inverse semigroups in terms of semidirect products,
but not of semilattices by groups as might be expected but of partially ordered sets by groups.
The disadvantage of the above result is that the structure of $E$-unitary inverse semigroups is described using ordered groupoids.
However, the information contained in the theorem can be couched in purely semigroup-theoretic language.
To do this we need the following classical definition.

Let $G$ be a group and $X$ a partially ordered set.
We shall suppose that $G$ acts on $X$ on the left by order automorphisms.
We denote the action of $g \in G$ on $x \in X$ by $g \cdot x$.
Let $Y$ be a subset of $X$ partially ordered by the induced ordering.
We say that $(G,X,Y)$ is a {\em McAlister triple} if the following three axioms hold:

\begin{description}

\item[{\rm (MT1)}] $Y$ is an order ideal of $X$ and a meet 
semilattice under the induced ordering.

\item[{\rm (MT2)}] $G \cdot Y = X$.

\item[{\rm (MT3)}] $g \cdot Y \cap Y \neq \emptyset$ for every $g \in G$.

\end{description}

Let $(G,X,Y)$ be a McAlister triple.
Put
$$P(G,X,Y) = \{(y,g) \in Y \times G \colon \: g^{-1} \cdot y \in Y \}.$$

\begin{lemma} Let $(e,g),(f,h) \in P(G,X,Y)$.
Then $e \wedge g \cdot f$ exists in the partially
ordered set $X$ and 
$(e \wedge g \cdot f,gh) \in P(G,X,Y)$.
\end{lemma}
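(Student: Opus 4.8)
The plan is to reduce the computation of the meet $e \wedge g \cdot f$, which a priori lives in the large poset $X$, to a meet inside the semilattice $Y$, where axiom (MT1) guarantees existence. The candidate I propose is
$$e \wedge g \cdot f = g \cdot \bigl( (g^{-1} \cdot e) \wedge f \bigr),$$
the inner meet being taken in $Y$. This is well defined: from $(e,g) \in P(G,X,Y)$ we have $g^{-1} \cdot e \in Y$, and $f \in Y$ by hypothesis, so by (MT1) the meet $(g^{-1} \cdot e) \wedge f$ exists in $Y$. Writing $z = (g^{-1} \cdot e) \wedge f$, the two structural facts I exploit throughout are that $G$ acts by order automorphisms (so each $g$ and $g^{-1}$ preserve the order) and that $Y$ is an order ideal of $X$ (so every element lying below a point of $Y$ already belongs to $Y$).

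First I would check that $g \cdot z$ is a lower bound of $e$ and $g \cdot f$: applying $g$ to $z \le g^{-1} \cdot e$ and $z \le f$ gives $g \cdot z \le e$ and $g \cdot z \le g \cdot f$. To see it is the greatest lower bound, let $c \in X$ satisfy $c \le e$ and $c \le g \cdot f$. Since $c \le e \in Y$ and $Y$ is an order ideal, $c \in Y$; applying $g^{-1}$ yields $g^{-1} \cdot c \le g^{-1} \cdot e$ and $g^{-1} \cdot c \le f$, so $g^{-1} \cdot c$ is a lower bound of the pair whose meet defines $z$, whence $g^{-1} \cdot c \le z$ and therefore $c \le g \cdot z$. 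This establishes that $e \wedge g \cdot f$ exists and equals $g \cdot z$.

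For the membership claim, note first that $e \wedge g \cdot f = g \cdot z \le e \in Y$, so $e \wedge g \cdot f \in Y$ by the order ideal property. It remains to verify $(gh)^{-1} \cdot (e \wedge g \cdot f) \in Y$. Since $e \wedge g \cdot f = g \cdot z$,
$$(gh)^{-1} \cdot (e \wedge g \cdot f) = h^{-1} \cdot \bigl( g^{-1} \cdot (g \cdot z) \bigr) = h^{-1} \cdot z.$$
Now $z \le f$ gives $h^{-1} \cdot z \le h^{-1} \cdot f$, and $h^{-1} \cdot f \in Y$ because $(f,h) \in P(G,X,Y)$; the order ideal property then forces $h^{-1} \cdot z \in Y$, which completes the proof.

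The main obstacle, and the point the whole argument is designed around, is that the second entry $g \cdot f$ of the meet need not lie in $Y$, so (MT1) cannot be applied to the pair $(e, g \cdot f)$ directly. The resolution rests on two observations acting together: because $Y$ is an order ideal, every lower bound of $e$ automatically lies in $Y$, so nothing is lost by restricting attention to $Y$; and because $G$ acts by order automorphisms, translating by $g^{-1}$ carries the entire problem into $Y$ without distorting the order. Once these are in hand the verification is a routine transport along the action, and I anticipate no difficulty in the remaining checks. It is worth noting that axioms (MT2) and (MT3) play no role in this particular lemma.
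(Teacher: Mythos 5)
Your proof is correct and takes essentially the same route as the paper: both compute the meet as $g \cdot z$ where $z = (g^{-1}\cdot e) \wedge f$ is taken in the semilattice $Y$, verify it is a greatest lower bound by pulling an arbitrary lower bound back along $g^{-1}$, and establish $(gh)^{-1}\cdot (e \wedge g \cdot f) = h^{-1}\cdot z \leq h^{-1}\cdot f \in Y$ together with the order ideal property. One tiny point of precision: in your greatest-lower-bound step the membership actually needed is $g^{-1}\cdot c \in Y$ (so that the meet taken in $Y$ applies to it), which follows from $g^{-1}\cdot c \leq g^{-1}\cdot e \in Y$ and the order ideal property, rather than the fact $c \in Y$ that you state.
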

\begin{proof}
By assumption, $g^{-1} \cdot e \in Y$.
Thus $g^{-1} \cdot e \wedge f$ exists since $Y$ is a semilattice.
Put $i = g^{-1} \cdot e \wedge f$.
Then $i \leq g^{-1} \cdot e$ and $i \leq f$.
Thus $g \cdot i \leq e$ and $g \cdot i \leq g \cdot f$.
Now let $j \leq e, g \cdot f$.
Then $g^{-1} \cdot j \leq g^{-1} \cdot e$ and $g^{-1} \cdot j \leq f$.
Hence $g^{-1} \cdot j \leq i$,
and so $j \leq g \cdot i$.
We have therefore shown that the meet $e \wedge g \cdot f$ exists.

To show that  
$(e \wedge g \cdot f,gh) \in P(G,X,Y)$, 
we have to show that the element
$(gh)^{-1} \cdot (e \wedge g \cdot f)$ 
belongs to $Y$.
Now
$$(gh)^{-1} \cdot (e \wedge g \cdot f)
=
h^{-1} \cdot (g^{-1} \cdot (e \wedge g \cdot f))$$ 
and
$$h^{-1} \cdot (g^{-1} \cdot (e \wedge g \cdot f)) 
\leq h^{-1} \cdot (g^{-1} \cdot (g \cdot f)) 
\leq h^{-1} \cdot f \in Y.$$ 
But $Y$ is an order ideal of $X$ and so
$(gh)^{-1} \cdot (e \wedge g \cdot f) \in Y$. 
\end{proof}

Define a product on $P(G,X,Y)$ by
$$(e,g)(f,h) = (e \wedge g \cdot f,gh).$$
It is well-defined by the above lemma.

A proof of the following is Theorem~9 of Section~7.2 of \cite{Lawson98}.

\begin{proposition} $P(G,X,Y)$ is an $E$-unitary inverse semigroup,
with semilattice of idempotents isomorphic to $Y$ and maximum
group homomorphic image isomorphic to $G$.
\end{proposition}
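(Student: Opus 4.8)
The plan is to check in turn that $P(G,X,Y)$ is a semigroup, that it is regular with commuting idempotents and hence inverse, that its semilattice of idempotents is $Y$, and finally that the projection onto $G$ exhibits $G$ as the maximum group image while simultaneously forcing $E$-unitarity. For associativity, I would first note that by Lemma~2.15 every product is defined, so all the meets appearing below exist; since $G$ acts by \emph{order} automorphisms, the action preserves any meet that exists, giving $g \cdot (f \wedge h \cdot k) = g \cdot f \wedge (gh) \cdot k$. Expanding both bracketings of $(e,g)(f,h)(k,l)$ then collapses each to $(e \wedge g \cdot f \wedge gh \cdot k,\, ghl)$, so the product is associative.

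Next I would identify the idempotents. From the defining product, $(e,g)$ is idempotent precisely when $g$ is the identity $1$ of $G$, and then $(e,1)(e,1)=(e,1)$ automatically; thus the idempotent set is $\{(e,1) : e \in Y\}$, and since $(e,1)(f,1) = (e \wedge f, 1)$ the map $e \mapsto (e,1)$ is an isomorphism of the semilattice $Y$ (using (MT1)) onto the idempotents. In particular the idempotents commute. To see regularity I would propose, for a general $(e,g)$, the inverse candidate $(g^{-1} \cdot e, g^{-1})$; this lies in $P(G,X,Y)$ because $g^{-1} \cdot e \in Y$ by definition of $P$ and $g \cdot (g^{-1}\cdot e) = e \in Y$, and a one-line computation yields $(e,g)(g^{-1}\cdot e, g^{-1})(e,g) = (e,g)$ together with the dual identity. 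A regular semigroup with commuting idempotents is inverse, so $P(G,X,Y)$ is an inverse semigroup whose semilattice of idempotents is isomorphic to $Y$.

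It remains to handle the group image and $E$-unitarity simultaneously through $\pi(e,g) = g$. This is visibly a homomorphism onto the group $G$, and it is surjective: applying (MT3) to $g^{-1}$ produces $e \in Y$ with $g^{-1}\cdot e \in Y$, that is, an element $(e,g) \in P(G,X,Y)$ mapping to $g$. Since the codomain is a group, $\pi$ factors through the minimum group congruence $\sigma$, so $\sigma \subseteq \ker \pi$. For the reverse inclusion I would use the explicit natural order, which here reads $(e,g) \leq (f,h) \Leftrightarrow g = h$ and $e \leq f$; hence if $\pi(e,g) = \pi(f,g)$ then $(e \wedge f, g)$ — which lies in $P(G,X,Y)$ because $Y$ is closed under meets — is a common lower bound of $(e,g)$ and $(f,g)$, so $(e,g)\,\sigma\,(f,g)$ by the description of $\sigma$ in terms of common lower bounds. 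Thus $\ker\pi = \sigma$ and $P(G,X,Y)/\sigma \cong G$. Finally $\pi^{-1}(1) = \{(f,1) : f \in Y\}$ is exactly the set of idempotents, so the identity $\sigma$-class coincides with $E$, which is precisely the statement that $P(G,X,Y)$ is $E$-unitary.

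I expect the one genuinely delicate point to be the identity $\ker\pi = \sigma$: the inclusion $\sigma \subseteq \ker\pi$ is formal from minimality, but the reverse inclusion is where the hypotheses do real work, since it needs both the explicit description of the natural partial order on $P(G,X,Y)$ and the closure of $Y$ under meets in order to manufacture the required common lower bound. Once this is established, the maximum group image and the $E$-unitary property both drop out at once.
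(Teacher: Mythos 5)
Your proposal is correct, but note that the paper itself offers no proof of this proposition: it simply defers to Theorem~9 of Section~7.2 of \cite{Lawson98}. Measured against the standard argument (which is what that citation contains), yours is essentially it: associativity from the fact that order automorphisms preserve existing meets, identification of the idempotents as $\{(e,1) \colon e \in Y\} \cong Y$, the explicit inverse $(g^{-1}\cdot e, g^{-1})$, the explicit form of the natural partial order, and the single projection $\pi(e,g) = g$ doing double duty for the maximum group image and $E$-unitarity.

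Two details need one-line repairs. First, your claim that $(e \wedge f, g) \in P(G,X,Y)$ ``because $Y$ is closed under meets'' is only half of the membership condition: you also need $g^{-1}\cdot (e \wedge f) \in Y$, and this follows from the order-ideal part of (MT1), since $g^{-1}\cdot (e \wedge f) \leq g^{-1}\cdot e \in Y$ --- the same device used in the paper's proof of Lemma~2.15. Second, for surjectivity of $\pi$: applying (MT3) to $g$ itself gives $e \in g \cdot Y \cap Y$, that is $e \in Y$ with $g^{-1}\cdot e \in Y$, so $(e,g) \in P(G,X,Y)$ maps to $g$; applying it to $g^{-1}$, as you wrote, gives instead $e \in Y$ with $g \cdot e \in Y$, and the element mapping to $g$ is then $(g \cdot e, g)$. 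Neither point disturbs the structure of your argument. One simplification worth recording: once you know $\pi^{-1}(1) = E$, $E$-unitarity is immediate without mentioning $\sigma$ at all --- if $e \leq s$ with $e$ idempotent, then $e = fs$ for some idempotent $f$, so $1 = \pi(e) = \pi(f)\pi(s) = \pi(s)$, whence $s \in \pi^{-1}(1) = E$. The identity $\ker \pi = \sigma$, which you rightly flag as the place where the hypotheses do real work, is needed only for the maximum-group-image half of the statement.
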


If we combine Theorem~2.4 with Proposition~2.16, we get the following first proved by Don McAlister.

\begin{theorem}[The $P$-theorem]
An inverse semigroup is $E$-unitary if and only if it is isomorphic to a $P$-semigroup.
\end{theorem}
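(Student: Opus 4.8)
The plan is to prove the two implications separately. One direction is immediate: Proposition~2.16 states that $P(G,X,Y)$ is $E$-unitary for every McAlister triple $(G,X,Y)$, so any inverse semigroup isomorphic to a $P$-semigroup is $E$-unitary. All the work lies in the converse.

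So suppose $S$ is $E$-unitary; the strategy is to manufacture a McAlister triple from the groupoid-theoretic description. First I would apply Theorem~2.14 to obtain an ordered embedding $\iota \colon {\bf G}(S) \rightarrow P(G,X)$ into a semidirect product of a poset by a group, with $P(G,X)$ an enlargement of $\iota({\bf G}(S))$ and with $\pi_2$ surjective on $\iota({\bf G}(S))$. The candidate triple is $(G,X,Y)$, where $Y := \iota({\bf G}(S))_o$ is the set of identities of the image, viewed inside $X = P(G,X)_o$. Checking the McAlister axioms is where each hypothesis of Theorem~2.14 is consumed. Axiom (MT1) has two halves: that $Y$ is an order ideal of $X$ is exactly the enlargement axiom (GE1) for the pair $\iota({\bf G}(S)) \subseteq P(G,X)$, and $Y$ is a meet-semilattice because it is the semilattice of identities of the inductive groupoid $\iota({\bf G}(S)) \cong {\bf G}(S)$. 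For (MT2) and (MT3) I would first record that in $P(G,X)$ an arrow $(x,g)$ has ${\bf r}(x,g) = x$ and ${\bf d}(x,g) = g^{-1} \cdot x$. Then (GE3), which says every identity of $P(G,X)$ is the range of an arrow whose domain lies in $Y$, reads exactly as $X = G \cdot Y$; and surjectivity of $\pi_2$ on $\iota({\bf G}(S))$ produces, for each $g \in G$, an arrow $(x,g) \in \iota({\bf G}(S))$ with both $x \in Y$ and $g^{-1} \cdot x \in Y$, so that $x \in Y \cap g \cdot Y$.

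It then remains to identify $S$ with $P(G,X,Y)$. The enlargement axiom (GE2) gives the set equality $\iota({\bf G}(S)) = \{(x,g) : x \in Y \mbox{ and } g^{-1} \cdot x \in Y\} = P(G,X,Y)$: one inclusion holds because the domain and range of any arrow in the subgroupoid lie in $Y$, and the reverse inclusion is (GE2) itself. I would then check that the inductive-groupoid structure inherited by $\iota({\bf G}(S))$ from $P(G,X)$ coincides with the inductive groupoid of the inverse semigroup $P(G,X,Y)$. Concretely, the inherited order $(x,g) \leq (y,h) \Leftrightarrow x \leq y \mbox{ and } g = h$ is precisely the natural partial order computed from the product $(e,g)(f,h) = (e \wedge g \cdot f, gh)$, and the inherited domain and range maps agree with those read off from $(e,g)^{-1} = (g^{-1} \cdot e, g^{-1})$. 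Once the two inductive groupoids are identified, the Ehresmann--Schein--Nambooripad theorem (Theorem~2.10) yields
$$S = {\bf S}({\bf G}(S)) \cong {\bf S}(\iota({\bf G}(S))) = P(G,X,Y).$$

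I expect the main obstacle to be this final identification, and in particular the verification that the pseudoproduct $\otimes$ on the inductive groupoid $\iota({\bf G}(S))$ genuinely reproduces the formula $(e,g)(f,h) = (e \wedge g \cdot f, gh)$. This is the point at which the meet in the poset $X$ and the group action must be reconciled with the restriction and corestriction operations out of which $\otimes$ is built through Lemma~2.7; lining up the domains, the ranges, and the single meet $e \wedge g \cdot f$ is the genuinely computational heart of the argument, even though each individual step is routine.
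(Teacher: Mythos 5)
Your proposal is correct and follows exactly the route the paper intends: the paper's proof is the single line ``combine Theorem~2.14 (misprinted there as Theorem~2.4) with Proposition~2.16,'' and your argument is precisely that combination carried out in detail --- Proposition~2.16 for the easy direction, and Theorem~2.14 plus the enlargement axioms (GE1)--(GE3) and the $\pi_{2}$-surjectivity condition to extract the McAlister triple $(G,X,Y)$ with $Y = \iota({\bf G}(S))_{o}$, identify $\iota({\bf G}(S))$ with $P(G,X,Y)$ via (GE2), and recover the multiplication through the pseudoproduct and the Ehresmann--Schein--Nambooripad theorem. The computational reconciliation you flag at the end (that $\otimes$ yields $(e,g)(f,h) = (e \wedge g \cdot f, gh)$) is exactly what Lemma~2.15 supplies, since $g \cdot \bigl((g^{-1}\cdot e) \wedge f\bigr) = e \wedge g \cdot f$ and meets of elements of $Y$ computed in $X$ stay in $Y$ because $Y$ is an order ideal.
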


\section{Ordered groupoids and left/right cancellative categories}

This section is a bridge between the inductive groupoid approach to studying inverse semigroups described in Section~2 and the
category action approach that we describe in Section~4.
Our goal is to show the extent to which ordered groupoids are related to left (respectively, right) cancellative categories.
The motivation for this section comes from the category-theoretic definition of a subobject.

\subsection{From left cancellative categories to ordered groupoids }

A {\em left cancellative} category is a category in which $xy = xz$ implies $y = z$.
It is therefore precisely a category of monomorphisms.
We define a {\em right cancellative} category dually, and a {\em cancellative} category is one which is both left and right cancellative.
A left (respectively, right) cancellative category with one identity is a left (respectively, right) cancellative monoid.
If $C$ is a subcategory of $D$, we say that it is {\em isomorphism-dense}
in $D$ if for each identity $e \in D_{o}$ there exists an identity
$f \in C_{o}$ and an isomorphism $x \in D$ such that 
$e \stackrel{x}{\longrightarrow} f$.
If $C$ is a subcategory of $D$, we say it is {\em full}
if $x \in D$ such that $\dom (x), \ran (x) \in C$ then $x \in C$.
A functor $F \colon C \rightarrow D$ is an {\em equivalence} if it is full, faithful and essentially surjective;
the first two conditions mean that the restriction $F \colon \mbox{hom}(e,f) \rightarrow \mbox{hom}(F(e),F(f))$
is surjective and injective respectively for all identities $e,f \in C_{o}$,
and the last condition means that each identity in $D$ is isomorphic to the image of an identity under $F$.

We now define two categories of structures, the relationship between them being the subject of this section.

\begin{description}
\item[$\mathcal{LC}$] The category of left cancellative categories
and their functors.
\item[$\mathcal{OG}$] The category of ordered groupoids and ordered functors.
\end{description}
We shall construct functors 
$$\G \colon \: \mathcal{LC} \rightarrow \mathcal{OG}
\text{ and }
\Le \colon \: \mathcal{OG} \rightarrow \mathcal{LC},$$
and describe their composites $\G \Le$ and $\Le \G$.

Let $C$ be a left cancellative category.
We shall construct an ordered groupoid $\G (C)$ from $C$.
Put 
$$U = \{(a,b) \in C \times C \colon \: \s (a) = \s (b)\}.$$
Define a relation $\sim$ on $U$ as follows:
$$(a,b) \sim (a',b') \Leftrightarrow (a,b) = (a',b')u \mbox{ for some isomorphism } u$$
where $(a',b')u = (a'u,b'u)$.
Then $\sim$ is an equivalence relation on $U$. 
Denote the equivalence class containing $(a,b)$ by $[a,b]$,
and the set of equivalence classes by ${\bf G}(C)$.
Define
$$\dom [a,b] = [b,b], \quad \ran [a,b] = [a,a] 
\mbox{ and }
[a,b]^{-1} = [b,a].$$
These are all well-defined.
Suppose $\dom [a,b] = \ran [c,d]$.
Then there exists an isomorphism $u$ in $C$ such that $b = cu$.
In this case, define the partial product
$$[a,b] \cdot [c,d] = [a,du].$$
The picture below illustrates why:
$$\diagram
&
&
&
\\
&
&
&\uto^{d} \lto_{c}
\\
&
&\lto_{a} \uto^{b} \urto_{u}
&
\enddiagram$$
We shall usually denote the partial product by concatenation.
Define a relation $\leq$ on ${\bf G}(C)$
by 
$$[a,b] \leq [c,d] \Leftrightarrow (a,b) = (c,d)p \mbox{ for some }p \in C.$$
This is well-defined and is a partial order.

If $\theta \colon \: C \rightarrow D$ is a functor
between two left cancellative categories, 
define the function 
${\bf G}(\theta) \colon \: {\bf G}(C) \rightarrow {\bf G}(D)$
by ${\bf G}(\theta)([a,b]) = [\theta (a),\theta (b)]$.

\begin{proposition} Let $C$ be a left cancellative category.
Then ${\bf G}(C)$ is an ordered groupoid,
and $\G$ is a functor from 
$\mathcal{LC}$ to $\mathcal{OG}$.
\end{proposition}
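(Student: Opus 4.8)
The plan is to verify in turn that $\mathbf{G}(C)$ is a groupoid under the partial product, that the order $\leq$ satisfies the ordered-groupoid axioms, and finally that $\mathbf{G}$ is functorial. The organising principle throughout is that left cancellativity of $C$ makes every construction well-behaved: the connecting isomorphism $u$ with $b = cu$ used to define the product is \emph{unique}, since $cu = cu'$ forces $u = u'$, and it is precisely this uniqueness that makes the partial product independent of the chosen representatives.

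For the groupoid axioms I would observe that each $[b,b]$ is an identity (indeed $\mathbf{d}[a,b] = [b,b]$ and $\mathbf{r}[a,b] = [a,a]$), and that $[a,b][b,b] = [a,b]$ and $[a,b][b,a] = [a,a]$ using the identity isomorphism, so that $[b,a]$ is a two-sided inverse of $[a,b]$. Associativity is the one computation needing care: given composable $[a,b]$, $[c,d]$, $[e,f]$, I would choose the (unique) isomorphisms $u$ with $b = cu$ and $v$ with $d = ev$, form both bracketings, and check that each equals $[a, f(vu)]$; the two connecting isomorphisms compose to $vu$ in either order, so the two products agree.

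For the order axioms I would invoke the characterization of Lemma~2.5, checking $\mathrm{(OG1)}$, $\mathrm{(OG2)}$, $\mathrm{(OI)}$ and $\mathrm{(OG4)}$, which is lighter than establishing $\mathrm{(OG3)}$ and $\mathrm{(OG3)}^{\ast}$ directly. Unwinding $[a,b] \leq [c,d]$ as $a = cp$, $b = dp$ makes $\mathrm{(OG1)}$ immediate from $[a,b]^{-1} = [b,a]$. Axiom $\mathrm{(OI)}$ is also quick: if $[a,b] \leq [b',b']$ then $a = b'p = b$, so $[a,b] = [a,a]$ is an identity. For $\mathrm{(OG4)}$, given an identity $[e,e] \leq \mathbf{d}[a,b] = [b,b]$, I would write $e = bp$ and take $y = [ap, bp]$, which satisfies $y \leq [a,b]$ and $\mathbf{d}(y) = [e,e]$. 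The axiom demanding genuine work is $\mathrm{(OG2)}$: with $[a,b] \leq [a',b']$, $[c,d] \leq [c',d']$ and both products defined via $b = cs$, $b' = c't$, I would show the product inequality by reusing the witness $p$ coming from $[a,b] \leq [a',b']$ and verifying the second-coordinate identity $ds = d'tp$; after substituting $d = d'q$ this reduces to $qs = tp$, which I extract by left-cancelling $c'$ from $c'qs = b = b'p = c'tp$. This is the step I expect to be the main obstacle, precisely because it is the one place the argument genuinely depends on cancellativity rather than on unwinding definitions.

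The remaining task is to confirm that $\mathbf{G}(\theta)[a,b] = [\theta(a), \theta(b)]$ defines an ordered functor and that $\mathbf{G}$ respects identities and composition. Since functors preserve isomorphisms, $\theta$ carries the relation $\sim$ and the inequality $\leq$ to their counterparts in $D$, so $\mathbf{G}(\theta)$ is well-defined and order-preserving; it respects $\mathbf{d}$, $\mathbf{r}$ and inverses on the nose, and preserves the partial product because $b = cu$ yields $\theta(b) = \theta(c)\theta(u)$ with $\theta(u)$ again an isomorphism. Functoriality of $\mathbf{G}$ on identities and composites is then immediate. Apart from the associativity calculation and the left-cancellation step in $\mathrm{(OG2)}$, every verification is a routine substitution.
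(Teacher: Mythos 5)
Your proof is correct, but it takes a genuinely different route from the paper's on the order axioms. The paper verifies (OG1) and (OG2) (asserting both are easy by left cancellativity) and then establishes (OG3) and (OG3)$^{\ast}$ directly, by writing down the explicit restriction $([x,y]\,|\,[a,a]) = [xp,a]$ (where $a = yp$) and corestriction $([b,b]\,|\,[x,y]) = [b,yq]$ (where $b = xq$), with uniqueness in each case coming from left cancellativity. You instead invoke Lemma~2.5 and check (OG1), (OG2), (OI) and (OG4); your (OG4) witness $[ap,bp]$ is exactly the paper's restriction, but you never have to prove it is unique, since that burden is absorbed into the proof of Lemma~2.5. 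What the paper's route buys is the explicit restriction and corestriction formulas, which are used immediately afterwards (in Lemma~3.2) to compute pseudoproducts in $\G(C)$; what your route buys is a lighter verification, and your fully written-out (OG2) computation --- left-cancelling $c'$ to extract $qs = tp$ --- fills in a step the paper only asserts, as do your checks of associativity and inverses for the groupoid structure. One small caution: well-definedness of the partial product requires independence of the chosen representatives, which is slightly more than the uniqueness of the connecting isomorphism $u$ for \emph{fixed} representatives that you cite as the organising principle; the check is routine (if $(a_1,b_1) = (a,b)w$ and $(c_1,d_1) = (c,d)v$ with $w,v$ isomorphisms, then the new connecting isomorphism is $v^{-1}uw$ and the resulting class $[a,du]$ is unchanged) and again rests on left cancellation, but it is a separate verification from the one you name.
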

\begin{proof}
The set $(\G (C),\cdot)$ equipped with the partial binary operation defined above
is a groupoid in which the set of identities is
$$\G (C)_{o} = \{[a,a] \colon \: a \in C \}.$$ 
To show that $(\G (C), \cdot,\leq)$ is an ordered groupoid,
we verify that the axioms (OG1)--(OG3) hold.
It is immediate that (OG1) holds, 
and the proof of (OG2) is easy by left cancellativity.
We now prove that both (OG3) and (OG3)$^{\ast}$ hold.

Let $[a,a] \leq \dom [x,y]$.
Then $a = yp$ for some $p \in C$.
Define
$$([x,y]\,|\,[a,a]) = [xp,a].$$
It is easy to check that this is a well-defined restriction
whose uniqueness is a consequence of left cancellativity.
Let $[b,b] \leq \ran [x,y]$.
Then $b = xq$ for some $q \in C$.
Define
$$([b,b]\,|\,[x,y]) = [b,yq].$$
It is easy to check that this is a well-defined corestriction,
which is likewise unique.
It follows that $\G (C)$ is an ordered groupoid.

If $\theta \colon \: C \rightarrow D$ is a functor
between left cancellative categories,
then 
$\G (\theta)$ is an ordered functor from
$\G (C)$ to $\G (D)$. 
The proof that $\G$ is a functor is also straightforward.
\end{proof}

It is useful to know when the pseudoproduct of two 
elements of $\G(C)$ exists.

\begin{lemma} Let $C$ be a left cancellative category, and $\G(C)$ its associated ordered groupoid.
\begin{enumerate}

\item Let $[x,y],[w,z] \in \G(C)$.
Then $[x,y] \otimes [w,z]$ is defined 
if and only if
$y$ and $w$ have a pullback in $C$.

\item Let $C$ have the following additional property:
if $a$ and $b$ are any elements of $C$ such that $\ran (a) = \ran (b)$
and that can be completed to a commutative square $aa' = bb'$ for some elements
$a'$ and $b'$, then $a$ and $b$ have a pullback.
Then $\G (C)$ is $\ast$-inductive.
 
\end{enumerate}
\end{lemma}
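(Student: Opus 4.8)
The plan is to reduce everything to the existence of meets of identities in $\G(C)$. By definition of the pseudoproduct, $[x,y] \otimes [w,z]$ is defined precisely when the meet $\dom[x,y] \wedge \ran[w,z]$ of identities exists in $\G(C)_{o}$; since $\dom[x,y] = [y,y]$ and $\ran[w,z] = [w,w]$, the whole of (1) comes down to the claim that $[y,y] \wedge [w,w]$ exists if and only if $y$ and $w$ have a pullback in $C$.

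To establish this claim I would first unwind the order on identities: from the definition of $\leq$, one has $[t,t] \leq [y,y]$ if and only if $t = yp$ for some $p \in C$, i.e.\ $t$ factors through $y$. Hence a common lower bound of $[y,y]$ and $[w,w]$ is exactly a pair of factorizations $t = yp = wq$ --- that is, a commutative square over $y$ and $w$ (which in particular forces $\ran(y) = \ran(w)$). For the forward implication, given a pullback $(p,q)$ of $y$ and $w$ I put $t = yp = wq$; then $[t,t]$ is a lower bound, and for any other lower bound $[s,s]$, writing $s = ya = wb$ and invoking the universal property produces $r$ with $a = pr$ and $b = qr$, whence $s = tr$ and $[s,s] \leq [t,t]$. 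So $[t,t] = [y,y] \wedge [w,w]$. For the reverse implication I start from the meet $[t,t]$, choose a representative giving $t = yp = wq$, and check that this square is a pullback: any competing cone $ya = wb =: s$ yields $[s,s] \leq [t,t]$, hence $s = tr$, and left cancellativity applied to $ya = y(pr)$ and $wb = w(qr)$ gives $a = pr$, $b = qr$, with $r$ unique again by left cancellativity. This proves the claim, and (1) is immediate.

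For (2) I apply (1). To show $\G(C)$ is $\ast$-inductive I must show that any two identities $[y,y]$, $[w,w]$ admitting a common lower bound admit a meet. As noted above, a common lower bound is precisely a commutative square $yp = wq$ with $\ran(y) = \ran(w)$; the additional hypothesis on $C$ says exactly that such a square forces $y$ and $w$ to have a pullback. By (1) the meet $[y,y] \wedge [w,w]$ then exists, so $\G(C)$ is $\ast$-inductive.

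The one genuinely delicate step is the reverse implication of the central claim --- manufacturing an honest pullback, complete with the uniqueness of the mediating morphism, out of the bare existence of a meet of identities. Both the existence and the uniqueness of that morphism rest on left cancellativity, so I would take care to apply it correctly; the residual checks that $\leq$ and the identities are insensitive to the equivalence $\sim$ are routine and were already dealt with in constructing $\G(C)$.
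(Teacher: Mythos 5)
Your proposal is correct and follows essentially the same route as the paper: both reduce the existence of $[x,y]\otimes[w,z]$ to the existence of the meet $[y,y]\wedge[w,w]$, identify lower bounds of such identities with commutative squares $yp=wq$, and use left cancellativity to match greatest lower bounds with pullbacks (the paper merely states these equivalences tersely, while you supply the mediating-morphism details, and it additionally records the explicit value $[x,y]\otimes[w,z]=[xp,zq]$, which your argument does not need). Part (2) is handled the same way in both: a common lower bound produces exactly the kind of commutative square covered by the extra hypothesis, so the pullback, and hence the meet, exists.
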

\begin{proof}
(1) Consider the identities $[y,y]$ and $[w,w]$.
They have a lower bound
iff elements $p,q \in C$ can be found such that $yp = wq$;
and they have a greatest lower bound iff $y$ and $w$ have
a pullback in $C$; 
left cancellativity is once again used crucially.
Let $a = yp = wq$ be a pullback of $y$ and $w$. 
Then $[y,y] \wedge [w,w] = [a,a]$.
A simple calculation using the forms of the restriction
and corestriction yields
$$[x,y] \otimes [w,z]
=
[xp,zq].$$
In diagrammatic terms this is just:
$$\diagram
&
&
&
\\
&
&
&\uto^{z} \lto_{w}
\\
&
&\lto_{x} \uto^{y} 
&\lto_{p} \uto_{q}
\enddiagram$$

(2) This follows immediately from the proof of (1).
\end{proof}

Ordered groupoids of the form $\G (C)$ have extra properties.
Let $G$ be an ordered groupoid.
We say that $G$ has {\em maximal identities} if there is
a function $G_{o} \rightarrow G_{o}$, denoted by $e \mapsto e^{\circ}$,
that has the following two properties:
\begin{description}
\item[{\rm (MI1)}] $e \leq e^{\circ}$.
\item[{\rm (MI2)}] If $e \leq i^{\circ},j^{\circ}$ 
then $i^{\circ} = j^{\circ}$.
\end{description}
Observe that $e^{\circ}$ really is a maximal identity,
for if $e^{\circ} \leq f$, where $f$ is an identity, 
then $e^{\circ} \leq f \leq f^{\circ}$ by (MI1).
But then by (MI2), we have that $e^{\circ} = f^{\circ}$, 
and so $e^{\circ} = f$.

We define the relation $\D$ on an ordered groupoid $G$ by
$g \,\D\, h$ iff they are in the same connected component
of the groupoid $G$.
It is immediate that $\D$ is an equivalence relation on $G$.

\begin{proposition} Let $C$ be a left cancellative category.
Then $\G (C)$ is an ordered groupoid with maximal identities.
In addition, each $\D$-class contains a maximal identity.
\end{proposition}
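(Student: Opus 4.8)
The plan is to establish the two assertions in turn, in each case leaning on the explicit description of the identities of $\G(C)$ and of the order between them. Recall that the identities are precisely the classes $[a,a]$, and that $[a,a] \leq [c,c]$ holds exactly when $a = cp$ for some $p \in C$; reading the composition right to left (as the product diagram dictates), this forces $\ran(a) = \ran(c)$.

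For the first assertion I would define the required function on $\G(C)_{o}$ by
$$[a,a]^{\circ} = [1_{\ran(a)},1_{\ran(a)}],$$
where $1_{\ran(a)}$ is the identity of $C$ at the object $\ran(a)$. This respects the equivalence $\sim$, since $[a,a]=[a',a']$ gives $a=a'u$ for an isomorphism $u$ and hence $\ran(a)=\ran(a')$. Axiom (MI1) is then just the identity $a = 1_{\ran(a)}a$, which shows $[a,a]\leq [1_{\ran(a)},1_{\ran(a)}]$. For (MI2) the one computation that matters is that $[a,a]\leq [1_{f},1_{f}]$ already forces $\ran(a)=f$: if $a = 1_{f}p$ then $a=p$ and $\ran(a)=\ran(p)=f$. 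Consequently, if $[a,a]\leq [b,b]^{\circ}$ and $[a,a]\leq [c,c]^{\circ}$ then $\ran(b)=\ran(a)=\ran(c)$, whence $[b,b]^{\circ}=[c,c]^{\circ}$. This gives (MI1) and (MI2), and by the observation following their statement each image $[a,a]^{\circ}$ is genuinely a maximal identity; in particular every class $[1_{f},1_{f}]$ is a maximal identity, being its own $\circ$.

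For the second assertion I would argue that every element is connected through the groupoid to one of these maximal identities. Given $[a,b]\in \G(C)$, it is $\D$-equivalent to its domain $[b,b]$, since $[a,b]$ is an arrow joining $[b,b]$ to $[a,a]$. Writing $d=\s(b)$, the pair $(b,1_{d})$ lies in $U$ because $\s(b)=\s(1_{d})=d$, and the arrow $[b,1_{d}]$ satisfies $\dom[b,1_{d}]=[1_{d},1_{d}]$ and $\ran[b,1_{d}]=[b,b]$. Hence $[b,b]\,\D\,[1_{d},1_{d}]$, so the $\D$-class of $[a,b]$ contains the maximal identity $[1_{\s(b)},1_{\s(b)}]$.

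None of the steps is hard once the formulas are written down. The only genuine decisions are to take $\ran(a)$ rather than $\s(a)$ in the definition of $[a,a]^{\circ}$ (forced by the direction of the order in (MI1)) and to exhibit the single connecting arrow $[b,1_{\s(b)}]$ for the $\D$-class statement. I expect the mild bookkeeping needed to check that $e \mapsto e^{\circ}$ and the connecting arrow respect the equivalence $\sim$ to be the only place calling for any care.
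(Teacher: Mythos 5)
Your proof is correct and follows essentially the same route as the paper's: the paper also defines $[a,a]^{\circ}=[\ran(a),\ran(a)]$ (your $[1_{\ran(a)},1_{\ran(a)}]$), verifies (MI1) via $(a,a)=(\ran(a),\ran(a))a$ and (MI2) by the same cancellation of the identity factor, and exhibits the connecting arrow $[a,\s(a)]$ (your $[b,1_{\s(b)}]$) to reach a maximal identity in each $\D$-class. Your extra checks that $e\mapsto e^{\circ}$ respects $\sim$ are a welcome touch of care the paper leaves implicit.
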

\begin{proof}
For each identity $[a,a] \in \G (C)$ define
$$[a,a]^{\circ} = [\f (a),\f (a)].$$
This is evidently a function from $\G (C)_{o}$ to $\G (C)_{o}$.
Because $(a,a) = (\f (a),\f (a))a$ we have that
$[a,a] \leq [a,a]^{\circ}$.
Thus (MI1) holds.
Suppose that 
$$[a,a] \leq [\ran (b), \ran (b)],[\ran (c),\ran (c)].$$
Then $a = \ran (b)p = \ran (c)q$ for some $p$ and $q$.
But then $a = p = q$ and so $\ran (b) = \ran (p) = \ran (a)$,
and $\ran (c) = \ran (p)$.
Thus $[\ran (b), \ran (b)] = [\ran (c),\ran (c)]$,
and so (MI2) holds.
It follows that $\G (C)$ has maximal identities.
Observe that an identity is of the form $[a,a]^{\circ}$
iff it is of the form $[e,e]$ for some identity $e \in C_{o}$.

Let $[a,a]$ be an arbitrary identity.
Consider the maximal identity 
$[\s (a),\s (a)]$.
Then $[a, \s (a)] \in \G (C)$ is
such that
$\dom [a, \s (a)] = [\s (a), \s (a)]$ 
and
$\ran [a, \s (a)] = [a,a]$.
Hence $[a,a] \, \D \, [\s (a), \s (a)]$.
It follows that each $\D$-class contains a maximal identity.
\end{proof}

We have shown how to construct an ordered groupoid $\G (C)$ from a left cancellative category.
It is likewise possible to construct an ordered groupoid $\G' (C)$ from a {\em right} cancellative category $C$.
Just as the construction of $\G (C)$ is a generalization of the construction of subobjects,
so the construction of $\G' (C)$ is a generalization of quotient objects.
The set $U'$ is defined to consist of those pairs of elements $(a,b)$
of $C$ such that ${\bf r}(a) = {\bf r}(b)$.
The equivalence relation $\sim$ is defined by
$$(a,b) \sim (a',b') \Leftrightarrow (a,b) = u(a',b')$$
for some isomorphism $u$. 
We define $\G' (C)$ to consist of $\sim$-equivalence classes.
The following is immediate

\begin{proposition} Let $C$ be a right cancellative category.
Then $\G' (C)$ is an ordered groupoid with maximal identities.
In addition, each $\mathcal{D}$-class contains a maximal identity.
\end{proposition}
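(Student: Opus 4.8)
The plan is to deduce this from the left-cancellative case already established, via passage to the opposite category. Let $C^{\mathrm{op}}$ denote the opposite of $C$. First I would record the basic duality: composability and composition in $C^{\mathrm{op}}$ are those of $C$ with the order of the factors reversed, so that the right-cancellation law in $C$ ($ba = ca \Rightarrow b = c$) is precisely the left-cancellation law in $C^{\mathrm{op}}$. Hence $C$ is right cancellative if, and only if, $C^{\mathrm{op}}$ is left cancellative, and in particular $C^{\mathrm{op}}$ is a legitimate input to the construction $\G$.

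Next I would verify that $\G'(C)$ is, formula for formula, nothing other than $\G(C^{\mathrm{op}})$. A pair $(a,b)$ lies in $U'$ exactly when $\f(a) = \f(b)$ in $C$; but $\f = \s^{\mathrm{op}}$ on arrows, so this is precisely the condition $\s^{\mathrm{op}}(a) = \s^{\mathrm{op}}(b)$ defining the set $U$ for $C^{\mathrm{op}}$. Since post-composition by an isomorphism $u$ in $C^{\mathrm{op}}$ is pre-composition by $u$ in $C$, the relation $(a,b) = u(a',b')$ defining $\sim$ for $\G'(C)$ is exactly the relation $(a,b) = (a',b')u$ computed in $C^{\mathrm{op}}$. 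The same reversal identifies the domain, range, inverse, partial product and order on $\G'(C)$ with those on $\G(C^{\mathrm{op}})$, and carries the maximal-identity map $e \mapsto e^{\circ}$ of Proposition~3.3 over (where $[a,a]^{\circ} = [\f(a),\f(a)]$ in $C^{\mathrm{op}}$ becomes $[\s(a),\s(a)]$ in $C$). Thus $\G'(C) = \G(C^{\mathrm{op}})$ as an ordered groupoid.

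With this identification in hand both assertions follow at once by applying the left-cancellative results to $C^{\mathrm{op}}$: Proposition~3.1 gives that $\G'(C) = \G(C^{\mathrm{op}})$ is an ordered groupoid, and Proposition~3.3 gives that it has maximal identities and that each $\D$-class contains one. The $\D$-relation is unchanged under the passage to $C^{\mathrm{op}}$, since $C$ and $C^{\mathrm{op}}$ have the same underlying graph of composability and hence $\G'(C)$ and $\G(C^{\mathrm{op}})$ are literally the same groupoid, with the same connected components.

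I expect no genuine obstacle — the paper itself flags the statement as immediate. The only point demanding care is the bookkeeping of the dualization: one must check that each structural formula defining $\G'(C)$ (the source/target swap in $U'$, left versus right multiplication by isomorphisms in $\sim$, the partial product, and the order) really is the image under $C \mapsto C^{\mathrm{op}}$ of the corresponding formula used to build $\G$, so that none of the verifications inside Propositions~3.1 and~3.3 has to be redone by hand.
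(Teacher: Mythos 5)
Your proposal is correct and is precisely the reasoning the paper leaves implicit: the paper states the result with no argument beyond ``The following is immediate,'' the intended justification being exactly the left/right duality you spell out. Your identification $\G'(C) = \G(C^{\mathrm{op}})$, together with the observation that $C$ is right cancellative if and only if $C^{\mathrm{op}}$ is left cancellative, so that Propositions~3.1 and~3.3 apply verbatim to $C^{\mathrm{op}}$, is the rigorous form of that ``immediate.''
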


\subsection{From ordered groupoids to left cancellative categories}

Let $G$ be an ordered groupoid.
Define
$$\Le (G) = \{(e,x) \in G_{o} \times G \colon \: {\bf r}(x) \leq e\}$$
and
$$\s (e,x) = (\dom (x),\dom (x))
\mbox{ and }
\f (e,x) = (e,e)$$
and define a partial product on $\Le (G)$ as follows:
if $\s (e,x) = \f (f,y)$ then
$$(e,x)(f,y) = (e,x \otimes y),$$
otherwise it is undefined.
If $\theta \colon \: G \rightarrow H$ is an ordered functor
between two ordered groupoids, define 
${\bf L}(\theta) \colon \: {\bf L}(G) \rightarrow {\bf L}(H)$
by ${\bf L}(\theta)(e,x) = (\theta (e),\theta (x))$.
Since $\ran (x) \leq e$ and $\theta$ is an order-preserving functor,
we have that $\theta (\ran (x)) = \ran ( \theta (x))$
and $\ran (\theta (x)) \leq \theta (e)$.   
Thus $\Le (\theta)(e,x)$ is an element of $\Le (G)$,
and so $\Le (\theta)$ is a well-defined function.

\begin{proposition} Let $G$ be an ordered groupoid.
Then ${\bf L}(G)$ is a left cancellative category,
and
$\Le$ defines a functor from $\mathcal{OG}$ to $\mathcal{LC}$.
The identities of ${\bf L}(G)$ are those elements of the form
$(e,e)$ where $e$ is an identity of $G$.
The invertible elements of $\Le (G)$ are those of the form $(\ran (x),x)$
and constitute a groupoid isomorphic to $G$.
\end{proposition}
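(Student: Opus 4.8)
The plan is to verify, in turn, that $\Le(G)$ is a category, that it is left cancellative, that $\Le$ is functorial, and finally to pin down the identities and the invertible elements. The single observation driving almost everything is that the partial product only ever invokes the pseudoproduct in a degenerate situation: if $(e,x)(f,y)$ is defined then $f = {\bf d}(x)$, so ${\bf r}(y) \leq f = {\bf d}(x)$ and hence ${\bf d}(x) \wedge {\bf r}(y) = {\bf r}(y)$. Consequently
\[
x \otimes y = (x \, | \, {\bf r}(y))\, y
\]
always exists in the ordered groupoid $G$ (no inductive hypothesis is needed), and since $(x\,|\,{\bf r}(y)) \leq x$ gives ${\bf r}(x \otimes y) = {\bf r}(x \, | \, {\bf r}(y)) \leq {\bf r}(x) \leq e$ by Lemma~2.4(1), the pair $(e, x \otimes y)$ again lies in $\Le(G)$. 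First I would record this, together with the routine checks that $\f(e, x\otimes y) = (e,e) = \f(e,x)$ and $\s(e, x\otimes y) = ({\bf d}(y),{\bf d}(y)) = \s(f,y)$, so the structure maps behave as a category requires, and that $(e,e)$ acts as a two-sided local identity (using $e \otimes x = x$ and $x \otimes {\bf d}(x) = x$, both immediate from the displayed formula).

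For associativity I would note that $((e,x)(f,y))(g,z)$ and $(e,x)((f,y)(g,z))$ are defined under the same constraints $f = {\bf d}(x)$ and $g = {\bf d}(y)$, whereupon the two products reduce to $(e, (x \otimes y) \otimes z)$ and $(e, x \otimes (y \otimes z))$. The order constraints ${\bf r}(y) \leq {\bf d}(x)$ and ${\bf r}(z) \leq {\bf d}(y)$ guarantee that every meet required by either bracketing exists (each is again forced to be the relevant ${\bf r}$), so both pseudoproducts are defined, and Lemma~2.8 then delivers their equality.

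The crux is left cancellativity. Suppose $(e,x)(f,y) = (e,x)(f',y')$; both products being defined forces $f = f' = {\bf d}(x)$, and equality of the products gives $x \otimes y = x \otimes y'$, that is $(x\,|\,{\bf r}(y))\,y = (x\,|\,{\bf r}(y'))\,y'$. I would first recover ${\bf r}(y) = {\bf r}(y')$ and then cancel in the groupoid. The tool is the bijection $i \mapsto {\bf r}(x\,|\,i)$ between identities $\leq {\bf d}(x)$ and identities $\leq {\bf r}(x)$, whose inverse is $j \mapsto {\bf d}(j\,|\,x)$; this holds because $(x\,|\,i)$ and the corestriction $(j\,|\,x)$ coincide whenever they share a common range, by uniqueness in (OG3) and (OG3)$^{\ast}$. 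Taking ranges in $x \otimes y = x \otimes y'$ gives ${\bf r}(x\,|\,{\bf r}(y)) = {\bf r}(x\,|\,{\bf r}(y'))$, so the bijection yields ${\bf r}(y) = {\bf r}(y') =: k$; both sides then read $(x\,|\,k)\,y$ and $(x\,|\,k)\,y'$, and since $(x\,|\,k)$ is invertible in the groupoid we left-cancel to obtain $y = y'$, hence $(f,y) = (f',y')$. I expect this bijection-plus-cancellation step to be the main obstacle, since it is the only place the argument goes beyond bookkeeping with the pseudoproduct formula.

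Finally I would treat the remaining assertions. Functoriality of $\Le$ reduces to showing $\Le(\theta)$ preserves products: as $\theta$ preserves restrictions by Lemma~2.3 and ${\bf r}(\theta(y)) = \theta({\bf r}(y)) \leq \theta({\bf d}(x)) = {\bf d}(\theta(x))$, we get $\theta(x \otimes y) = \theta(x\,|\,{\bf r}(y))\,\theta(y) = (\theta(x)\,|\,{\bf r}(\theta(y)))\,\theta(y) = \theta(x) \otimes \theta(y)$; preservation of identities and of the structure maps, and functoriality in $\theta$, are then immediate. The identities are the objects $\{(e,e) : e \in G_{o}\}$, as already seen. For the invertibles I would check directly that $({\bf d}(x), x^{-1})$ is a two-sided inverse of $({\bf r}(x), x)$, using $x \otimes x^{-1} = {\bf r}(x)$ and $x^{-1} \otimes x = {\bf d}(x)$; conversely, if $(e,y)$ is invertible then there is $(f,z)$ with $y \otimes z = e$ an identity, and taking ranges forces $e = {\bf r}(y\,|\,{\bf r}(z)) \leq {\bf r}(y) \leq e$, whence $e = {\bf r}(y)$. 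Thus the invertibles are exactly the $({\bf r}(x),x)$, and they form a groupoid (as in any category). The map $x \mapsto ({\bf r}(x),x)$ is then a bijection onto them carrying ${\bf d},{\bf r}$ to $\s,\f$ and, because $x \otimes y = xy$ whenever ${\bf d}(x) = {\bf r}(y)$, carrying the groupoid product to the category product; it is therefore the desired isomorphism of groupoids.
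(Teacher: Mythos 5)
Your proof is correct and follows essentially the same route as the paper's: the key observation that the product degenerates to $(e,x)(f,y)=(e,(x\,|\,\ran(y))y)$, left cancellativity via the uniqueness clauses of (OG3)/(OG3)$^{\ast}$ followed by cancellation in the groupoid, functoriality via Lemma~2.3, and the same characterization of identities and invertible elements. You merely spell out in full several steps the paper declares ``easy to check'' (associativity, and that only elements of the form $(\ran(x),x)$ are invertible), and your bijection $i\mapsto\ran(x\,|\,i)$ is just a repackaging of the paper's direct appeal to uniqueness of corestriction.
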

\begin{proof}
We regard the element $(e,x)$ in diagrammatic terms
as follows
$$\diagram
&e \ddotted
&\dom (x)\dlto^{x}
\\
&\ran (x)
&
\enddiagram$$
The dotted line indicates the natural partial order.
The fact that $\s (e,x) = \f (f,y)$
means, in diagrammatic terms, that we have
$$\diagram
&e \ddotted
&f \dlto^{x} \ddotted
&\dom (y) \dlto^{y}
\\
&\ran (x)
&\ran (y)
&
\enddiagram$$
Thus using the restriction we can construct the following
$$\diagram
&e \ddotted
&f \dlto^{x} \ddotted
&\dom (y) \dlto^{y}
\\
&\ddotted
&\ran (y)  \dlto^{(x\,|\,\ran (y))}
&
\\
&\ran (x \,|\, \ran (y))
&
&
\enddiagram$$
But $(x \,|\, \ran (y))y = x \otimes y$.
It is now evident that $\Le (G)$ is a category
whose set of identities is
$$\Le (G)_{o} = \{(e,e) \colon \: e \in C_{o} \}.$$
We prove that it is left cancellative.
Suppose that 
$$(e,x)(f,y) = (e,x)(f,z).$$
Then $(x \,|\, \ran (y))y = (x \,|\, \ran (z))z$.
But $(x \,|\, \ran (y)),(x \,|\, \ran (z)) \leq x$
and $\ran (x \,|\, \ran (y)) = \ran (x \,|\, \ran (z))$.
Hence $(x \,|\, \ran (y)) = (x \,|\, \ran (z))$ and so $y = z$.
We have therefore proved that $(f,y) = (f,z)$.

The proof that $\Le (\theta)$ is a functor
follows from the fact that if
$\theta \colon \: G \rightarrow H$ is an ordered functor and $e \leq \dom (x)$,
then $\theta (e) \leq \dom (\theta (x))$
and so $\theta (x \,|\, e) = (\theta (x) \,|\, \theta (e))$.
Consequently $\Le$ is a functor.

It is easy to check that the inverse of $(\ran (x),x)$ is $(\ran (x^{-1}), x^{-1})$,
and that only elements of the form $(\ran (x),x)$ are invertible.
The function from $G$ to $\Le (G)$ defined by $x \mapsto (\ran (x),x)$ 
induces an isomorphism between $G$ and the groupoid of invertible elements of $\Le (G)$.
\end{proof}

We may also construct a {\em right} cancellative category
from an ordered groupoid.
Let
$$\R (G) = \{(x,e) \in G \times G_{o} \colon \: {\bf d}(x) \leq e\}.$$
Define 
$${\bf d}(x,e) = (e,e)
\mbox{ and }
{\bf r}(x,e) = ({\bf r}(x),{\bf r}(x))$$
and define a partial product on $\R (G)$ as follows:
if ${\bf d}(x,e) = {\bf r}(y,f)$ then
$(x,e)(y,f) = (x \otimes y,f)$,
else it is undefined.

\begin{proposition} Let $G$ be an ordered groupoid.
Then $\R (G)$ is a right cancellative category,
and $\Le (G)^{\scriptsize op}$ is isomorphic
to $\R (G)$.
\end{proposition}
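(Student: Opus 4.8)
The plan is to avoid checking the right cancellative axioms of $\R (G)$ by hand and instead to build an explicit isomorphism $\Phi \colon \R (G) \to \Le (G)^{\mathrm{op}}$, from which right cancellativity will be inherited from Proposition~3.5. The candidate map is essentially forced by comparing the two defining sets: an element of $\R (G)$ is a pair $(x,e)$ with ${\bf d}(x) \leq e$, whereas an element of $\Le (G)$ is a pair $(e,y)$ with ${\bf r}(y) \leq e$; since ${\bf r}(x^{-1}) = {\bf d}(x)$, the assignment
$$\Phi (x,e) = (e,x^{-1})$$
sends $\R (G)$ into $\Le (G)$, with set-theoretic inverse $(e,y) \mapsto (y^{-1},e)$. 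First I would confirm that $\Phi$ is a well-defined bijection between the underlying sets of elements.

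Next I would verify that $\Phi$ matches the categorical data once domains and ranges are read in the opposite category $\Le (G)^{\mathrm{op}}$. On identities this is immediate, as $e^{-1} = e$ for $e \in G_{o}$. For the rest one simply unwinds definitions: ${\bf d}_{\R}(x,e) = (e,e)$ corresponds to ${\bf d}_{\Le^{\mathrm{op}}}(e,x^{-1}) = {\bf r}_{\Le}(e,x^{-1}) = (e,e)$, and dually ${\bf r}_{\R}(x,e) = ({\bf r}(x),{\bf r}(x))$ corresponds to ${\bf r}_{\Le^{\mathrm{op}}}(e,x^{-1}) = {\bf d}_{\Le}(e,x^{-1}) = ({\bf r}(x),{\bf r}(x))$. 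Passing to the opposite is exactly what converts the right cancellative shape into the left cancellative one.

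The substantive step is functoriality on partial products. A product $(x,e)(y,f) = (x \otimes y,f)$ in $\R (G)$ is defined precisely when $e = {\bf r}(y)$; I would note that in this case ${\bf d}(x) \leq e = {\bf r}(y)$, so ${\bf d}(x) \wedge {\bf r}(y) = {\bf d}(x)$ exists and the pseudoproduct $x \otimes y$ is automatically defined. The same condition $e = {\bf r}(y)$ governs the composite $\Phi (x,e)\Phi (y,f)$ taken in $\Le (G)^{\mathrm{op}}$, which is the composite $(f,y^{-1})(e,x^{-1})$ in $\Le (G)$, equal to $(f,\,y^{-1} \otimes x^{-1})$. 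Comparing with $\Phi ((x,e)(y,f)) = (f,(x \otimes y)^{-1})$, functoriality reduces to the single identity
$$(x \otimes y)^{-1} = y^{-1} \otimes x^{-1}.$$

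I expect this identity to be the only real obstacle, though it is short. Writing $e = {\bf d}(x) \wedge {\bf r}(y)$, so that $x \otimes y = (x\,|\,e)(e\,|\,y)$, one gets $(x \otimes y)^{-1} = (e\,|\,y)^{-1}(x\,|\,e)^{-1}$; the relations $(x\,|\,e)^{-1} = (e\,|\,x^{-1})$ and $(e\,|\,y)^{-1} = (y^{-1}\,|\,e)$ come directly from the definition of the corestriction in the proof of Lemma~2.4(6), and since ${\bf d}(y^{-1}) \wedge {\bf r}(x^{-1}) = {\bf r}(y) \wedge {\bf d}(x) = e$ the right-hand side is exactly $y^{-1} \otimes x^{-1}$. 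As the set-inverse $(e,y) \mapsto (y^{-1},e)$ is a functor by the identical argument, $\Phi$ is a partial-product-preserving bijection carrying $\R (G)$ onto the category $\Le (G)^{\mathrm{op}}$; this transports the category axioms to $\R (G)$ and gives the claimed isomorphism. Finally, since $\Le (G)$ is left cancellative by Proposition~3.5, its opposite $\Le (G)^{\mathrm{op}}$ is right cancellative, and hence so is $\R (G)$, completing the proof.
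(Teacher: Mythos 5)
Your proof is correct and follows essentially the same route as the paper: the paper defines $\theta \colon \Le (G)^{\mathrm{op}} \rightarrow \R (G)$ by $\theta ((e,x)^{\mathrm{op}}) = (x^{-1},e)$, which is exactly the inverse of your $\Phi$, and verifies functoriality by the same computation. The only differences are cosmetic: you make explicit the identity $(x \otimes y)^{-1} = y^{-1} \otimes x^{-1}$ (which the paper uses silently in the step $(y^{-1},f)(x^{-1},e) = \theta ((e,x \otimes y)^{\mathrm{op}})$), and you transport the category axioms across $\Phi$ rather than checking them directly on $\R (G)$, a verification the paper dismisses as routine.
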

\begin{proof}
It is easy to check that $\R (G)$ is a category.
We now prove that $\Le (G)^{\scriptsize op}$ is isomorphic
to $\R (G)$.
Define a function
$\theta \colon {\bf L}(G)^{\scriptsize op} \rightarrow {\bf R}(G)$
by $\theta (e,x)^{\scriptsize op} = (x^{-1},e)$.
It is immediate that this function is bijective
and maps identities to identities.
Suppose that
$(f,y)^{\scriptsize op}(e,x)^{\scriptsize op}$ 
is defined in ${\bf L}(G)^{\scriptsize op}$.
By definition
$$(f,y)^{\scriptsize op}(e,x)^{\scriptsize op}
= ((e,x)(f,y))^{\scriptsize op}$$
and so
${\bf d}(x) = f$.
By definition,
$$\theta (f,y)^{\scriptsize op} = (y^{-1},f)
\mbox{ and }
\theta (e,x)^{\scriptsize op} = (x^{-1},e).$$
It follows that 
$\theta (f,y)^{\scriptsize op}\theta (e,x)^{\scriptsize op}$
is defined in ${\bf R}(G)$.
We have that
$$\theta (f,y)^{\scriptsize op}\theta (e,x)^{\scriptsize op}
= 
(y^{-1},f)(x^{-1},e)
=
(y^{-1} \otimes x^{-1},e)
=
\theta (e,x \otimes y).$$
Thus $\theta$ is an isomorphism from ${\bf L}(G)^{\scriptsize op}$ 
to ${\bf R}(G)$.
\end{proof}

\subsection{Forward and back}
We shall now describe the relationship between $\Le \G (C)$ and $C$, and between $\G \Le (G)$ and $G$.
Our first theorem tells us that up to equivalence {\em every} left cancellative category can be constructed from an ordered groupoid.

\begin{theorem} Let $C$ be a left cancellative category.
Then $C' = {\bf L}{\bf G}(C)$ is a left cancellative category.
Define
$$\iota \colon \: C \rightarrow C'
\mbox{ by }
\iota (a) = ([\f (a),\f (a)],[a, \s(a)]).$$
Then $\iota$ is an injective functor which embeds 
$C$ in $C'$ as a full, isomorphism-dense subcategory.
In particular, $C$ and ${\bf L}{\bf G}(C)$
are equivalent categories.
\end{theorem}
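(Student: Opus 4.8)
The plan is to show that $\iota$ is full, faithful and essentially surjective onto its image, so that the concluding equivalence follows from the characterisation of equivalences recalled before Proposition~3.1. That $C' = \Le\G(C)$ is left cancellative needs no new work: $\G(C)$ is an ordered groupoid by Proposition~3.1, and $\Le$ sends any ordered groupoid to a left cancellative category by Proposition~3.6. I would first check $\iota$ is well defined, i.e. that $\iota(a)$ genuinely lies in $\Le\G(C)$: the membership condition is $\ran[a,\s(a)] \leq [\f(a),\f(a)]$, and since $\ran[a,\s(a)] = [a,a]$ and $(a,a) = (\f(a),\f(a))a$, this is immediate from the definition of $\leq$ on $\G(C)$.

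The substantive part of functoriality is preservation of composition. Preservation of source and target is a direct unwinding of the formulas for $\s,\f$ in $\Le(G)$ together with the fact that $\s(a),\f(a)$ are identities; the only real computation is that $\iota(a)\iota(b) = \iota(ab)$ whenever $ab$ is defined in $C$. By the definition of the product in $\Le(G)$ this amounts to the pseudoproduct identity $[a,\s(a)]\otimes[b,\s(b)] = [ab,\s(ab)]$ in $\G(C)$. I would read this off Lemma~3.3(1): here one of the two morphisms to be pulled back is the identity $\s(a) = \f(b)$, so the pullback is the degenerate one with data $p = b$, $q = \s(b)$, and the formula $[x,y]\otimes[w,z] = [xp,zq]$ gives $[a\cdot b,\s(b)\cdot\s(b)] = [ab,\s(ab)]$, as needed.

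For faithfulness I would note that $\iota(a) = \iota(a')$ forces $[a,\s(a)] = [a',\s(a')]$, hence $(a,\s(a)) = (a'u,\s(a')u)$ for some isomorphism $u$; comparing the second coordinates gives $u = \s(a') = \s(a)$ and then $a = a'$. For fullness, a morphism from $\iota(e')$ to $\iota(f')$ in $C'$ (with $e',f'$ identities of $C$) is a pair $([f',f'],\xi)$ with $\dom(\xi) = [e',e']$ and $\ran(\xi)\leq[f',f']$. The first condition lets me normalise the representative of $\xi$ so that $\xi = [w,e']$ with $\s(w) = e'$, and the second then forces $\f(w) = f'$; thus $w \in \mathrm{hom}_C(e',f')$ and the pair is exactly $\iota(w)$, so $\iota$ is full.

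It remains to show $\iota(C)$ is isomorphism-dense. Every object of $C'$ has the form $([a,a],[a,a])$ for some $a \in C$, and I would exhibit the required isomorphism using the description of the invertible elements of $\Le(G)$ in Proposition~3.6: the element $([\s(a),\s(a)],[\s(a),a])$ is invertible and runs from $([a,a],[a,a])$ to $([\s(a),\s(a)],[\s(a),\s(a)]) = \iota(\s(a))$, the image of the identity $\s(a)$ of $C$. Since $\iota$ is full, faithful and essentially surjective, it is an equivalence, so $C$ and $\Le\G(C)$ are equivalent. I expect the pseudoproduct calculation of the second paragraph to be the main obstacle, as it requires correctly locating the (degenerate) pullback of Lemma~3.3(1) and tracking representatives through the product formula; the representative-normalisation in the fullness argument is the other place where some care with the equivalence relation~$\sim$ is required.
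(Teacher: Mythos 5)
Your proposal is correct and follows essentially the same route as the paper's own proof: the same well-definedness check, the same degenerate-pullback computation $[a,\s (a)]\otimes [b,\s (b)] = [ab,\s (b)]$ for functoriality, the same cancellation of the isomorphism $u$ for faithfulness, a representative-normalisation for fullness that matches the paper's manipulation with the isomorphisms $u,v$, and (up to taking inverses) the same invertible element $([a,a],[a,\s (a)])$ for isomorphism-density. The only blemishes are cosmetic off-by-one citations: the pullback lemma is Lemma~3.2(1), not 3.3(1), and the left cancellativity of $\Le (G)$ and the description of its invertible elements are in Proposition~3.5, not 3.6.
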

\begin{proof}
Identities of $C'$ have the form
$([z,z],[z,z])$ where $z \in C$.
Thus $\iota$ maps identities to identities.
Suppose that $ab$ is defined in $C$.
Then $\iota (a)\iota (b)$ is defined in $C'$.
From the fact that 
$$[a,\s (a)] \otimes [b,\s (b)] = [ab,\s (b)]$$
we quickly deduce that
$\iota (ab) = \iota (a)\iota (b)$.
Thus $\iota$ is a functor.

Suppose that $\iota (a) = \iota (b)$.
Then there are isomorphisms $u,v \in C$ such that
$$\f (a) = \f (b)u, \quad a = bv \mbox{ and } \s (a) = \s (b)v.$$ 
Thus $v = \s (a)$ and so $a = b$.
Hence $\iota$ is injective.

Let $([c,c],[a,b])$ be an arbitrary element of $C'$ such that
$$\s ([c,c],[a,b]), \f ([c,c],[a,b]) \in \iota (C).$$
From $\ran [a,b] \leq [c,c]$,
we get that $a = cp$ for some $p \in C$.
By our assumption, we have that $[b,b] = [e,e]$ and $[c,c] = [f,f]$ for some
$e,f \in C_{o}$.
Thus there are isomorphisms $u,v$ such that
$b = eu$ and $c = fv$.
Now
$$[c,c] = [\f (v), \f (v)]
\mbox{ and }
[a,b] = [vpu^{-1},\f (u)].$$
But $\iota (vpu^{-1}) = ([\f (v),\ f (v)],[vpu^{-1},\f (p)])$.
Thus $\iota$ is a full functor.

Finally, let $([a,a],[a,a])$ be an arbitrary identity in $C'$.
Consider the element $([a,a],[a,\s (a)])$ of $C'$.
It is an isomorphism such that
$$\f ([a,a],[a,\s (a)]) = ([a,a],[a,a])
\mbox{ and } 
\s ([a,a],[a,\s (a)]) = \iota (\s (a)).$$
\end{proof}

We shall now describe the relationship between $G$ and $\G \Le (G)$.
This is not as satisfactory and explains why we cannot put ordered groupoids and left cancellative categories on a par.

The ordered groupoid $\G \Le (G)$ is a somewhat complicated object, so to help us see what is going on,  we construct an isomorphic copy that is easier to understand.

Let $G$ be an ordered groupoid.
Define $\overline{G}$ as follows:
$$\overline{G} = \{\langle e,x,f \rangle \in G_{o} \times G \times G_{o}  \colon \: \dom (x) \leq f, \ran (x) \leq e \}.$$
Define 
$$\dom (\langle e,x,f \rangle) = \, \langle f, \dom (x), f \rangle
\mbox{ and }
\ran (\langle e,x,f \rangle) = \, \langle e, \ran (x), e \rangle,$$
and
$$\langle e,x,f \rangle^{-1} = \, \langle f,x^{-1},e \rangle.$$
These are all elements of $\overline{G}$.
If $\dom \langle e,x,f \rangle = \ran \langle i,y,j \rangle$,
define
$$\langle e,x,f \rangle \langle i,y,j \rangle = \langle e,xy,j \rangle.$$
Then $\overline{G}$ is easily seen to be a groupoid in which the identities are the elements of the form $\langle e,f,e \rangle$.
Define
$$\langle e,x,f \rangle \, \leq \, \langle e',x',f' \rangle$$ 
iff $e = e'$, $f = f'$ and $x \leq x'$.
Finally, define 
$$\langle e,f,e \rangle^{\circ} = \langle e,e,e \rangle.$$
Observe that if $f \leq e$ where $e,f \in G_{o}$,
then $\dom (\langle e,f,f \rangle) = \langle f,f,f \rangle$
and $\ran (\langle e,f,f \rangle) = \langle e,f,e \rangle$.

\begin{lemma} Let $G$ be an ordered groupoid.
Then $\overline{G}$ is an ordered groupoid with maximal identities in which each $\mathcal{D}$-class contains a maximal identity.
The map $\kappa \colon \: \overline{G} \rightarrow G$ is an ordered functor that is order-reflecting.
\end{lemma}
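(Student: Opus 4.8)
The plan is to take $\kappa$ to be the projection onto the middle coordinate, $\kappa(\langle e,x,f\rangle) = x$, and to verify the claimed properties one at a time, leaning on Lemma~2.5 so as to avoid checking the uniqueness clause of (OG3) by hand. Since $\overline{G}$ is already known to be a groupoid, to see that $(\overline{G},\leq)$ is an ordered groupoid it suffices to check (OG1), (OG2), (OI) and (OG4). Axiom (OG1) is immediate from the definitions of $\leq$ and of inversion, applying (OG1) in $G$ to the middle coordinate. For (OG2), if $\langle e,x,f\rangle \leq \langle e,x',f\rangle$ and $\langle f,y,b\rangle \leq \langle f,y',b\rangle$ with both composites defined, then $\dom x = \ran y$ and $\dom x' = \ran y'$, and the two products are $\langle e,xy,b\rangle$ and $\langle e,x'y',b\rangle$; as $x \leq x'$ and $y \leq y'$, axiom (OG2) in $G$ gives $xy \leq x'y'$. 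For (OI), if $\langle e,x,f\rangle$ lies below an identity $\langle a,b,a\rangle$ then $e = f = a$ and $x \leq b$ with $b \in G_{o}$, so $x \in G_{o}$ by Lemma~2.4(7), making $\langle e,x,f\rangle$ an identity. For (OG4), given an identity $\langle f,b,f\rangle \leq \dom\langle e,x,f\rangle = \langle f,\dom x,f\rangle$ we have $b \leq \dom x$, and the restriction $(x\,|\,b)$ in $G$ yields $\langle e,(x\,|\,b),f\rangle$, which lies in $\overline{G}$ since $\ran(x\,|\,b)\leq \ran x \leq e$ and $\dom(x\,|\,b) = b \leq f$, sits below $\langle e,x,f\rangle$, and has domain $\langle f,b,f\rangle$.

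Next I would treat the maximal identities using the given assignment $\langle e,f,e\rangle^{\circ} = \langle e,e,e\rangle$. Axiom (MI1) holds because $f \leq e$ for every identity $\langle e,f,e\rangle$, so $\langle e,f,e\rangle \leq \langle e,e,e\rangle$. For (MI2), if $\langle e,f,e\rangle$ lies below both $\langle i,i,i\rangle$ and $\langle j,j,j\rangle$ then matching the outer coordinates forces $i = e = j$, so the two maximal identities coincide. For the $\D$-class statement, given $\langle e,x,f\rangle$ I would exhibit a single connecting arrow: the element $\langle \dom x,\dom x,f\rangle$ lies in $\overline{G}$ and runs from $\dom\langle e,x,f\rangle = \langle f,\dom x,f\rangle$ to the maximal identity $\langle \dom x,\dom x,\dom x\rangle$. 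Since $\langle e,x,f\rangle$ already connects its own domain to its range, it is connected to a maximal identity, so every $\D$-class contains one.

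Finally, the functor $\kappa$ preserves domains, ranges, inverses and products directly from the definitions (for instance $\kappa(\langle e,x,f\rangle\langle f,y,j\rangle) = xy$), and it is order-preserving since $\langle e,x,f\rangle \leq \langle e,x',f\rangle$ gives $x \leq x'$. Order-reflection is built into the definition of $\leq$ on $\overline{G}$: a comparison between elements with the same bounding identities holds exactly when their middle coordinates are comparable, that is, exactly when their $\kappa$-images are; concretely, any $y \leq \kappa(\langle e,x,f\rangle)$ lifts uniquely to $\langle e,y,f\rangle \leq \langle e,x,f\rangle$, using $\dom y \leq \dom x \leq f$ and $\ran y \leq \ran x \leq e$ from Lemma~2.4(1).

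I expect the only steps needing real care to be (OG4) and the $\D$-class claim, where one must keep the two bounding identities $e,f$ consistent while manipulating restrictions in $G$, checking at each stage that the triples produced genuinely satisfy $\dom(\cdot)\leq f$ and $\ran(\cdot)\leq e$ so that they lie in $\overline{G}$. Everything else reduces to routine transport of the axioms of $G$ across the projection $\kappa$ once the bookkeeping of the outer coordinates is in place.
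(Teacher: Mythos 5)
The paper states this lemma without proof (the verification is left as routine), so there is no proof of record to compare against; your argument is correct and supplies the missing details. Routing the ordered-groupoid check through Lemma~2.5 (verifying (OG1), (OG2), (OI), (OG4)) is a sensible way to avoid the uniqueness clause of (OG3), and your connecting arrow $\langle \dom (x), \dom (x), f\rangle$ for the $\D$-class claim is, up to inversion, exactly the element $\langle e,f,f\rangle$ that the paper points to in the observation immediately preceding the lemma. The membership checks ($\dom (\cdot) \leq f$, $\ran (\cdot) \leq e$) and the appeals to Lemma~2.4(1),(7) all occur where they are needed.

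One point deserves more care than you give it, because the paper defines neither $\kappa$ nor ``order-reflecting''. Your choice $\kappa (\langle e,x,f\rangle) = x$ is the only natural one, but with it $\kappa$ cannot be order-reflecting in the standard sense that $\kappa (u) \leq \kappa (v)$ implies $u \leq v$: an order-preserving, order-reflecting map is automatically injective, whereas $\kappa$ identifies $\langle e,f,f\rangle$ with $\langle f,f,f\rangle$ whenever $f < e$ in $G_{o}$. So the statement is only true under the weaker reading you in effect adopt, namely the lifting property: every $y \leq \kappa (u)$ lifts (uniquely, once the outer coordinates of $u$ are fixed) to some $w \leq u$ with $\kappa (w) = y$; equivalently, $\kappa$ restricted to each fibre $\{\langle e, \cdot ,f\rangle\}$ is an order embedding whose image is an order ideal of $G$. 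Your first formulation of order-reflection (``comparisons between elements with the same bounding identities hold exactly when the middle coordinates are comparable'') is just the definition of $\leq$ on $\overline{G}$ and carries no content; the lifting statement is the substantive half. You should say explicitly at the outset which property you are proving and note that the strong reading fails, since otherwise the proof appears to establish a claim that is false for every $G$ with a nontrivial order on its identities.
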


The connection with $\G \Le (G)$ is described in the following result.

\begin{proposition} Let $G$ be an ordered groupoid.
Then $G' = \G \Le (G)$ is isomorphic to $\overline{G}$.
\end{proposition}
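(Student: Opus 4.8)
The plan is to construct an explicit isomorphism $\Phi \colon \G\Le(G) \to \overline{G}$ and to check directly that it is an isomorphism of ordered groupoids. Recall that an element of $\G\Le(G)$ is a class $[(e,x),(f,y)]$ with $(e,x),(f,y) \in \Le(G)$ sharing a domain, i.e. $\mathbf{d}(x) = \mathbf{d}(y)$, and with $\mathbf{r}(x) \leq e$, $\mathbf{r}(y) \leq f$. I would set
$$\Phi([(e,x),(f,y)]) = \langle e, xy^{-1}, f \rangle .$$
First I would confirm this lands in $\overline{G}$: the product $xy^{-1}$ is defined because $\mathbf{d}(x) = \mathbf{d}(y)$, and $\mathbf{r}(xy^{-1}) = \mathbf{r}(x) \leq e$ while $\mathbf{d}(xy^{-1}) = \mathbf{r}(y) \leq f$, which are exactly the defining constraints of $\overline{G}$. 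The cleanest point is well-definedness on $\sim$-classes: an isomorphism of $\Le(G)$ has the form $(\mathbf{r}(w),w)$, and right multiplication by it replaces $(x,y)$ by $(xw,yw)$, under which $xy^{-1}$ is visibly unchanged since $xw(yw)^{-1} = xy^{-1}$.

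For bijectivity I would exhibit the inverse $\langle e,z,f \rangle \mapsto [(e,z),(f,\mathbf{d}(z))]$. Equivalently, I would note that every class has the canonical representative $[(e,z),(f,\mathbf{d}(z))]$ with $z = xy^{-1}$, obtained by right multiplying by the isomorphism $(\mathbf{d}(y),y^{-1})$, and that $\Phi$ is then a bijection between such representatives and the elements of $\overline{G}$. Working with these canonical representatives, preservation of $\mathbf{d}$, $\mathbf{r}$ and inversion is a short computation; for instance $\Phi(\mathbf{d}[(e,x),(f,y)]) = \langle f, \mathbf{r}(y), f \rangle = \mathbf{d}\langle e,xy^{-1},f \rangle$, and similarly for $\mathbf{r}$ and for the inverse $[(e,x),(f,y)]^{-1} = [(f,y),(e,x)]$. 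These are routine.

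The real work, and the step I expect to be the main obstacle, is verifying that $\Phi$ preserves the partial product and both preserves and reflects the order, since these require translating the pseudoproduct-based operations of $\Le(G)$ and $\G\Le(G)$ into the componentwise operations of $\overline{G}$. For the product I would take two composable canonical representatives; the composability condition $\mathbf{d}[\cdots] = \mathbf{r}[\cdots]$ forces the outer identities to match and forces $\mathbf{d}(z) = \mathbf{r}(z')$, and then the product rule $[a,b][c,d] = [a,du]$ evaluates to $[(e,z),(f',z'^{-1})]$, so that $\Phi$ of it is $\langle e, zz', f' \rangle$, which is exactly the product in $\overline{G}$. For the order I would unwind $[A,B] \leq [C,D]$, which means $(A,B) = (C,D)p$ for some $p = (\ell,m) \in \Le(G)$; writing the action of $p$ through the pseudoproduct and applying axioms (OG1) and (OG2) together with Lemma~2.4(1) gives $xy^{-1} = (x_{0}\,|\,\mathbf{r}(m))(y_{0}\,|\,\mathbf{r}(m))^{-1} \leq x_{0}y_{0}^{-1}$, so $\Phi$ is order-preserving. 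Conversely, given matching outer identities and $xy^{-1} \leq x_{0}y_{0}^{-1}$, choosing $m = \mathbf{d}(xy^{-1})$ produces an explicit $p$ witnessing $[A,B] \leq [C,D]$, so $\Phi$ reflects the order. Since a bijective, order-preserving, order-reflecting functor is an isomorphism of ordered groupoids, this completes the argument.
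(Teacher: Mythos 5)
Your proposal is correct and follows essentially the same route as the paper: your map $\Phi$ is exactly the inverse of the paper's bijection $\alpha(\langle e,x,f\rangle) = [(e,x),(f,\mathbf{d}(x))]$, and the key steps — invariance of $xy^{-1}$ under right multiplication by the isomorphisms $(\mathbf{r}(w),w)$ of $\Le(G)$, reduction to canonical representatives $[(e,z),(f,\mathbf{d}(z))]$, and the componentwise order and product comparisons — are the same as in the paper's argument.
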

\begin{proof}
We show first that every element of $G'$
can be written uniquely in the form $[(e,w),(f,\dom (w))]$ for some $w \in G$.
Let $[(e,x),(f,y)] \in G'$.
Then $\dom (x) = \dom (y)$ and $\ran (x) \leq e$
and $\ran (y) \leq f$.
The pair $(\dom (x),y^{-1})$ is an isomorphism in $\L (G)$,
and
$$(e,x)(\dom (x),y^{-1}) = (e,xy^{-1})
\mbox{ and }
(f,y)(\dom (x),y^{-1}) = (f,\ran (y)).$$
Put $w = xy^{-1}$.
We have proved that
$$[(e,x),(f,y)] = [(e,w),(f, \dom (w))].$$
Suppose now that 
$$[(e,w),(f,\dom (w))] = [(e',w'),(f',\dom (w'))].$$
Then there is an invertible element
$(\dom (x),x)$ in $\Le (G)$ 
such that
$$(f ,\dom (w)) = (f',\dom(w'))(\dom (x),x)$$
where $\dom (w') = \dom (x)$.
It follows that $f = f'$ and $x = \dom (w)$.
Thus $(\dom (x),x)$ is an identity and so
$(e,w) = (e',w')$ and $(f,\dom (w)) = (f',\dom (w'))$.
It follows that $e = e'$, $f = f'$ and $w = w'$.

Define 
$$\alpha \colon \: \overline{G} \rightarrow G' 
\mbox{ by }
\alpha (\langle e,x,f \rangle) = [(e,x),(f ,\dom (x))].$$
Then the two results above show that $\alpha$ is a bijection.
We shall prove that it is an isomorphism of ordered groupoids.

Suppose that $\langle e,x,f \rangle \, \leq \, \langle e,x',f \rangle$.
Then $x \leq x'$.
The pair $(\dom (x'), \dom (x)) \in \Le (G)$
and
$$(e,x')(\dom (x'),\dom (x)) = (e,x)
\mbox{ and }
(f,\dom (x))(\dom (x'),\dom (x)) = (f, \dom (x)).$$
Thus $\alpha (\langle e,x,f \rangle) \, \leq \, \alpha (\langle e,x'f \rangle)$, and so
$\alpha$ is order-preserving.
Suppose that $\alpha (\langle e,x,f \rangle) \, \leq \, \alpha (\langle e',x',f' \rangle)$.
Then there is an element $(i,a) \in \Le (G)$ such that
$$(e,x) = (e',x')(i,a)
\mbox{ and }
(f, \dom (x)) = (f',\dom (x'))(i,a).$$
It readily follows from this that
$e = e'$, $f = f'$ and $x \leq x'$.
Hence $\langle e,x,f \rangle  \, \leq \, \langle e',x',f' \rangle$. 
We have therefore proved that $\alpha$ is an order isomorphism.

It remains to show that $\alpha$ is a functor.
It is easy to check that $\alpha$ maps identities to identities.
Suppose that $\langle f,x,e \rangle \langle e,y,i \rangle$ is defined in $\overline{G}$.
By definition
$$\alpha (\langle f,x,e \rangle ) = [(f,x),(e,\dom(x))]
\mbox{ and }
\alpha (\langle e,y,i \rangle) = [(e,y),(i,\dom (y))].$$
Observe that $(e,\dom (x)) = (e,y)(\dom(y),y^{-1})$
where $(\dom (y),y^{-1})$ is invertible.
It follows that
$\alpha (\langle f,x,e \rangle)\alpha (\langle e,y,i \rangle)$
is defined in $G'$
and is equal to
$[(f,x),(i,y^{-1})]$.
But
$$[(f,x),(i,y^{-1})] = [(f,xy),(i,\dom (xy))] = \alpha (\langle f,xy,i \rangle)$$
making use of the isomorphism $(\dom (x),y)$.
Thus $\alpha$ is a functor.
\end{proof}

To get a sharper connection betweeen $G$ and $\overline{G}$ we need to assume more about $G$.

\begin{proposition} Let $G$ be an ordered groupoid with maximal identities in which each $\mathcal{D}$-class contains
a maximal identity.
Then $\overline{G}$ is an enlargement of $G$.
\end{proposition}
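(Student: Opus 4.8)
The plan is to identify $G$ with a concrete ordered subgroupoid of $\overline{G}$ and then verify the three enlargement axioms directly. Since $G$ has maximal identities, I have the map $e \mapsto e^{\circ}$ on $G_o$, and I would define $\iota \colon G \to \overline{G}$ by
$$\iota(x) = \langle \ran(x)^{\circ}, x, \dom(x)^{\circ} \rangle.$$
This lies in $\overline{G}$ because $\dom(x) \leq \dom(x)^{\circ}$ and $\ran(x) \leq \ran(x)^{\circ}$ by (MI1). On identities it gives $\iota(e) = \langle e^{\circ}, e, e^{\circ} \rangle$, and by (MI2) these are exactly the elements of $\overline{G}$ whose outer coordinates are maximal identities.

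First I would check that $\iota$ is an ordered embedding. Functoriality is a direct computation: $\dom \iota(x) = \langle \dom(x)^{\circ}, \dom(x), \dom(x)^{\circ} \rangle = \iota(\dom x)$, similarly for $\ran$, and when $xy$ is defined the middle coordinates match so that $\iota(x)\iota(y) = \langle \ran(x)^{\circ}, xy, \dom(y)^{\circ} \rangle = \iota(xy)$; injectivity is immediate from the middle coordinate. The one delicate point is that $\iota$ both preserves and reflects the order. Reflection is trivial, since $\iota(x) \leq \iota(x')$ forces $x \leq x'$. For preservation, suppose $x \leq x'$; then $\ran(x) \leq \ran(x')$ and $\dom(x) \leq \dom(x')$ by Lemma~2.4(1), and I would use (MI2) to upgrade these to $\ran(x)^{\circ} = \ran(x')^{\circ}$ and $\dom(x)^{\circ} = \dom(x')^{\circ}$ (each pair lies below a common maximal identity), which is precisely what the definition of $\leq$ on $\overline{G}$ demands. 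Hence $\iota$ is an ordered embedding, and by the remark following the definition of ordered embedding its image $\iota(G)$ is an ordered subgroupoid of $\overline{G}$ isomorphic to $G$; I identify $G$ with $\iota(G)$, noting $\iota(G)_o = \{\langle e^{\circ}, e, e^{\circ} \rangle : e \in G_o\}$.

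It then remains to verify (GE1)--(GE3). For (GE1), if $\langle a, f, a \rangle \leq \iota(e) = \langle e^{\circ}, e, e^{\circ} \rangle$ in $\overline{G}_o$ then $a = e^{\circ}$ and $f \leq e$, so $f \leq e^{\circ}$ and $f \leq f^{\circ}$ give $a = e^{\circ} = f^{\circ}$ by (MI2), whence $\langle a, f, a \rangle = \iota(f) \in \iota(G)_o$. For (GE2), if $z = \langle e, x, f \rangle$ has $\dom z, \ran z \in \iota(G)$, then writing these identities in the form $\langle g^{\circ}, g, g^{\circ} \rangle$ forces $f = \dom(x)^{\circ}$ and $e = \ran(x)^{\circ}$, so $z = \iota(x)$; again (MI2) is the engine. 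For (GE3), given an identity $\langle a, f, a \rangle$ of $\overline{G}$ I would invoke the hypothesis that the $\D$-class of $f$ in $G$ contains a maximal identity $m = m^{\circ}$: choosing $w \in G$ with $\ran(w) = f$ and $\dom(w) = m$, the element $z = \langle a, w, m \rangle$ satisfies $\ran(z) = \langle a, f, a \rangle$ and $\dom(z) = \langle m, m, m \rangle = \iota(m) \in \iota(G)$, as required.

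The main obstacle is the order-reflecting (equivalently, order-preserving) property of $\iota$: everything else is bookkeeping, but the fact that a genuine inequality $x \leq x'$ in $G$ becomes an inequality in $\overline{G}$ --- where the order only relates elements with identical outer coordinates --- hinges on (MI2), which collapses all maximal identities above a fixed element to a single one. Once this is in hand, (GE1) and (GE2) are further applications of the same principle, while the $\D$-class hypothesis supplies exactly the connecting morphism needed for (GE3).
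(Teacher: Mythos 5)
Your proof is correct and takes essentially the same approach as the paper: the paper's (two-line) proof defines exactly the same map $\pi(g) = \langle \ran(g)^{\circ}, g, \dom(g)^{\circ} \rangle$ and simply asserts that it is an ordered embedding whose image has $\overline{G}$ as an enlargement. Your write-up supplies the verifications the paper leaves implicit --- notably the (MI2) argument that makes $\iota$ order-preserving, and the use of the $\mathcal{D}$-class hypothesis to produce the arrow needed for (GE3).
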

\begin{proof} Define $\pi \colon G \rightarrow \overline{G}$ by $\pi (g) = (\ran (g)^{\circ}, g, \dom (g)^{\circ})$.
Then $\pi$ is an ordered embedding and $\overline{G}$ is an enlargement of the image of $\pi$.
\end{proof}

\subsection{Rooted categories}

The results we have obtained so far on ordered groupoids and left cancellative categories show that we almost have a correspondence between them.
But to get sharper results, we need to restrict the class of left cancellative categories we consider and correspondingly the class of ordered groupoids.
In both cases, we need an `anchor'.

A {\em weak terminal identity} in a category is an identity $1$ with the property that for each identity $e$ there is an arrow from $e$ to $1$.

\begin{lemma} Let $G$ be an ordered groupoid with maximum identity $1$.
Then $\Le (G)$ has a weak terminal identity $(1,1)$.
\end{lemma}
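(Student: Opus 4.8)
The plan is to unwind the definition of a weak terminal identity and, for each identity of $\Le (G)$, to write down an explicit arrow into $(1,1)$. First I would recall from Proposition~3.5 that the identities of $\Le (G)$ are exactly the pairs $(e,e)$ with $e \in G_{o}$; since $1 \in G_{o}$ the pair $(1,1)$ is one of them, so it only remains to produce, for an arbitrary identity $(e,e)$, an arrow from $(e,e)$ to $(1,1)$.

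Here it is worth keeping the domain and range conventions of $\Le (G)$ firmly in view: an element $(f,x)$ satisfies $\dom (f,x) = (\dom (x),\dom (x))$ and $\ran (f,x) = (f,f)$, so an arrow from $(e,e)$ to $(1,1)$ is nothing but an element $(f,x) \in \Le (G)$ with $f = 1$ and $\dom (x) = e$. The obvious candidate is $(1,e)$, with $e$ read as the identity arrow at $e$ in $G$. I would then verify the two things needed: that $(1,e)$ lies in $\Le (G)$, and that it has the stated endpoints. Membership requires $\ran (e) \leq 1$; since $e$ is an identity we have $\ran (e) = e$, and since $1$ is the maximum identity of $G$ we have $e \leq 1$, so indeed $(1,e) \in \Le (G)$. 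For the endpoints, $\dom (e) = e$ gives $\dom (1,e) = (e,e)$, while $\ran (1,e) = (1,1)$ by definition, so $(1,e)$ is the desired arrow.

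There is no real obstacle in this argument; the only point demanding care is the direction of the arrow, that is, arranging the bookkeeping so that $(1,e)$ runs from $(e,e)$ to $(1,1)$ and not the reverse. It is also worth noting exactly where the hypothesis enters: maximality of $1$ in $G_{o}$ is precisely what delivers $\ran (e) \leq 1$, which is the sole condition required for $(1,e)$ to be an element of $\Le (G)$.
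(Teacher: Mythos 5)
Your proof is correct and follows exactly the paper's argument: the paper's (much terser) proof also takes the element $(1,e)$, noting that $e \leq 1$ by maximality of $1$ is precisely what guarantees $(1,e) \in \Le(G)$, and that this element is then an arrow from $(e,e)$ to $(1,1)$. Your additional care about endpoints and the direction of the arrow is a correct unwinding of what the paper leaves implicit.
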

\begin{proof}
Let $(e,e) \in \Le (G)_{o}$.
Then $e \leq 1$ and so $(1,e) \in \Le (G)_{o}$.
\end{proof}

Let $C$ be a left cancellative category.
The set $[a,a] \otimes \G (C) \otimes [a,a]$
is the set of all products $[a,a] \otimes [x,y] \otimes [a,a]$
where they are defined and forms an ordered subgroupoid of $\G (C)$.
 
\begin{lemma}
Let $C$ be a left cancellative category with weak terminal identity $1$.
Then $[a,b]  \in [1,1] \otimes \G (C) \otimes [1,1]$
if and only if $\ran (a) = 1 = \ran (b)$.
\end{lemma}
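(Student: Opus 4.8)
The plan is to reduce both implications to one order-theoretic fact about the identities of $\G(C)$: for any $[c,c] \in \G(C)_{o}$ one has $[c,c] \leq [1,1]$ if and only if $\ran(c) = 1$. To prove this, recall that $[c,c] \leq [1,1]$ means $(c,c) = (1,1)p = (p,p)$ for some $p \in C$. Since $1$ is an identity, the composite $1 \cdot p$ is defined precisely when $\ran(p) = 1$, and then $(c,c) \sim (p,p)$ forces $c = pu$ for an isomorphism $u$, so that $\ran(c) = \ran(p) = 1$. Conversely, if $\ran(c) = 1$ then $(c,c) = (1,1)c$, giving $[c,c] \leq [1,1]$. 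This equivalence is the engine of the whole argument.

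For the implication from right to left, suppose $\ran(a) = 1 = \ran(b)$, so that $[a,a] \leq [1,1]$ and $[b,b] \leq [1,1]$ by the previous paragraph. I would then verify directly that $[a,b] = [1,1] \otimes [a,b] \otimes [1,1]$. Writing the pseudoproduct as a restriction followed by a corestriction, $[1,1] \otimes [a,b] = ([1,1] \,|\, f)(f \,|\, [a,b])$ with $f = \dom[1,1] \wedge \ran[a,b] = [1,1] \wedge [a,a] = [a,a]$. Since the restriction of an identity is that identity, $([1,1] \,|\, [a,a]) = [a,a]$, and the corestriction of $[a,b]$ to its own range $\ran[a,b] = [a,a]$ returns $[a,b]$; hence $[1,1] \otimes [a,b] = [a,a] \cdot [a,b] = [a,b]$. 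Symmetrically, using $\dom[a,b] = [b,b] \leq [1,1]$, one gets $[a,b] \otimes [1,1] = [a,b]$. Therefore $[a,b] = [1,1] \otimes [a,b] \otimes [1,1] \in [1,1] \otimes \G(C) \otimes [1,1]$. As a cross-check one can instead invoke Lemma~2.19: the pullback of any arrow against the identity $1$ is trivial, and the explicit formula $[x,y] \otimes [w,z] = [xp,zq]$ again collapses to $[a,b]$.

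For the converse, suppose $[a,b] = [1,1] \otimes [x,y] \otimes [1,1]$ for some $[x,y] \in \G(C)$. Since $[1,1]$ is an identity of the groupoid, it is an idempotent for the pseudoproduct, so the general facts $\ran(e \otimes s) = e \wedge \ran(s) \leq e$ and $\dom(s \otimes e) = \dom(s) \wedge e \leq e$ apply with $e = [1,1]$. Bracketing the triple product as $[1,1] \otimes ([x,y] \otimes [1,1])$ yields $\ran[a,b] = [a,a] \leq [1,1]$, so $\ran(a) = 1$; bracketing it as $([1,1] \otimes [x,y]) \otimes [1,1]$ yields $\dom[a,b] = [b,b] \leq [1,1]$, so $\ran(b) = 1$. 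This is exactly the required conclusion.

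I expect the only real friction to be notational rather than conceptual: keeping the equivalence relation $\sim$ (the quotient by isomorphisms) straight while verifying $[c,c] \leq [1,1] \iff \ran(c) = 1$, and confirming that multiplying by the identity arrow $1$ genuinely returns $[a,b]$ rather than a proper restriction of it. Once the order characterization of the identities below $[1,1]$ is secured, both directions are short. It is worth noting that the weak terminal hypothesis on $1$ is not actually used in this equivalence; it merely functions as the standing assumption of the subsection and will be needed in the results that build on this lemma.
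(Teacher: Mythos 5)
Your proof is correct and takes essentially the same route as the paper's: both directions come down to the observation that an identity $[c,c]$ satisfies $[c,c] \leq [1,1]$ precisely when $\ran(c) = 1$, together with the fact that pseudomultiplying by the identity $[1,1]$ acts by (co)restriction and so fixes any $[a,b]$ whose domain and range lie below $[1,1]$. You merely make explicit what the paper leaves implicit (the computation $[1,1] \otimes [a,b] \otimes [1,1] = [a,b]$ and the bounds $\ran(e \otimes s) \leq e$, $\dom(s \otimes e) \leq e$), and your closing remark that weak terminality of $1$ plays no role in this particular equivalence is also accurate.
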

\begin{proof}
Let $[a,b] \in [1,1] \otimes {\bf G}(C) \otimes [1,1]$.
Then $[a,a],[b,b] \leq [1,1]$.
It follows that there are $p,q \in C$
such that $a = 1p$ and $b = 1q$.
Thus, in particular, ${\bf r}(a) = {\bf r}(b) = 1$.
Conversely, suppose that
${\bf r}(a) = {\bf r}(b) = 1$.
Then $a = 1a$ and $b = 1b$ and so
$[a,a],[b,b] \leq [1,1]$.
Hence $[a,b] \in [1,1] \otimes {\bf G}(C) \otimes [1,1]$.
\end{proof}

Let $C$ be a left cancellative category with weak terminal identity $1$.
Put $\G_{1}(C) = [1,1] \otimes {\bf G}(C) \otimes [1,1]$.

\begin{proposition} Let $G$ be an ordered groupoid with maximum identity $1$.
Then $G$ is isomorphic to $\G_{1}(\Le (G))$.
\end{proposition}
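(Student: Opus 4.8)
The plan is to build an explicit isomorphism as the composite of two maps that are already within reach. Since $1$ is the maximum identity of $G$, we have $\dom(x),\ran(x) \leq 1$ for every $x \in G$, so the assignment $\beta \colon G \to \overline{G}$, $\beta(x) = \langle 1,x,1 \rangle$, lands in $\overline{G}$. First I would check that $\beta$ is an ordered embedding: it is a functor because products, inverses, domains and ranges of elements whose outer identities are both $1$ stay inside the set $\overline{G}_{1} = \{\langle 1,x,1 \rangle : x \in G\}$ and mirror the corresponding operations in $G$, and it preserves and reflects the order since $\langle 1,x,1 \rangle \leq \langle 1,x',1 \rangle$ holds (by the definition of the order on $\overline{G}$) exactly when $x \leq x'$. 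Consequently the image $\overline{G}_{1}$ is an ordered subgroupoid of $\overline{G}$ isomorphic to $G$.

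Next I would transport $\overline{G}_{1}$ across the isomorphism $\alpha \colon \overline{G} \to \G\Le(G)$ constructed above, which is given by $\alpha(\langle e,x,f \rangle) = [(e,x),(f,\dom(x))]$; in particular $\alpha(\langle 1,x,1 \rangle) = [(1,x),(1,\dom(x))]$. The heart of the proof is the identification $\alpha(\overline{G}_{1}) = \G_{1}(\Le(G))$. Here I would use the Lemma describing $\G_{1}$, applied to the left cancellative category $C = \Le(G)$ with its weak terminal identity $(1,1)$: an element $[a,b]$ of $\G\Le(G)$ lies in $\G_{1}(\Le(G))$ if and only if $\ran(a) = (1,1) = \ran(b)$. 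Since the range of $(e,y)$ in $\Le(G)$ is the identity $(e,e)$, this condition says precisely that $a = (1,x)$ and $b = (1,y)$ for some $x,y \in G$ with $\dom(x) = \dom(y)$. The inclusion $\alpha(\overline{G}_{1}) \subseteq \G_{1}(\Le(G))$ is then immediate, and for the reverse inclusion I would feed such an $[(1,x),(1,y)]$ into the normal-form computation from the proof that $\G\Le(G) \cong \overline{G}$, which rewrites it as $[(1,w),(1,\dom(w))]$ with $w = xy^{-1}$ while leaving both first coordinates equal to $(1,1)$; thus $[(1,x),(1,y)] = \alpha(\langle 1,w,1 \rangle) \in \alpha(\overline{G}_{1})$.

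Since $\alpha$ is an isomorphism of ordered groupoids and both $\overline{G}_{1}$ and $\G_{1}(\Le(G))$ are ordered subgroupoids carrying the induced order, the restriction of $\alpha$ is an isomorphism of ordered groupoids from $\overline{G}_{1}$ onto $\G_{1}(\Le(G))$. Composing with $\beta$ yields the desired isomorphism $\Phi = \alpha\beta \colon G \to \G_{1}(\Le(G))$, given explicitly by $\Phi(x) = [(1,x),(1,\dom(x))]$.

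I expect the one genuinely delicate point to be the bookkeeping behind $\alpha(\overline{G}_{1}) = \G_{1}(\Le(G))$: one must match the \emph{abstract} description of $\G_{1}$ coming from the weak-terminal-identity lemma with the \emph{concrete} image under $\alpha$, and in particular verify that the normal form keeps both outer coordinates pinned at $(1,1)$, so that the correspondence is exactly with $\overline{G}_{1}$ and not some larger subgroupoid. If one preferred a self-contained argument bypassing $\overline{G}$, the same map $\Phi$ can be verified directly to be a bijective ordered functor; there the only computations needing care are the pseudoproduct simplifications (each collapses to an ordinary restriction or groupoid product because one factor is always a restriction of an identity, e.g.\ $h \otimes \dom(g) = (h \, | \, \dom(g))$) and the order-reflecting step, where from $\Phi(g) \leq \Phi(h)$ one extracts the comparison element $(\dom(h),\dom(g)) \in \Le(G)$ and recovers $g = (h \, | \, \dom(g)) \leq h$.
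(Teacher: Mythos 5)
Your proposal is correct and follows essentially the same route as the paper: the embedding $x \mapsto \langle 1,x,1\rangle$ into $\overline{G}$, composition with the isomorphism $\alpha$ of Proposition~3.9, and the normal-form rewriting $[(1,x),(1,y)] = [(1,xy^{-1}),(1,\dom(xy^{-1}))]$ to get surjectivity onto $\G_{1}(\Le(G))$. The only difference is presentational—you make explicit the appeal to the lemma characterizing $\G_{1}$ via $\ran(a) = (1,1) = \ran(b)$, which the paper leaves implicit—so the two arguments coincide in substance.
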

\begin{proof}
From the proof of Proposition~3.10,
the function $\pi \colon \: G \rightarrow \bar{G}$
defined by
$$\pi (x) = \langle 1,x,1 \rangle$$
is a well-defined, injective ordered functor
since here $e^{\circ} = 1$ for each identity $e$ in $G$.
In addition, $G$ is isomorphic to $\pi (G)$.
Under the isomorphism $\alpha \colon \: \overline{G} \rightarrow \G \Le (G)$
of Proposition~3.9, 
we have that $\alpha \pi$ is contained in $\G_{1}\Le (G)$.
Let $[(1,x),(1,y)]$ be an arbitrary element of $\G_{1}\Le (G)$.
Then $(\dom (y),y^{-1})$ is an isomorphism
and $(1,x)(\dom (y),y^{-1}) = (1,xy^{-1})$
and $(1,y))(\dom (y),y^{-1}) = (1,yy^{-1})$.
Thus
$$[(1,x),(1,y)] = [(1,xy^{-1}),(1,yy^{-1})]
=
\alpha \pi (xy^{-1}).$$
It follows that $G$ is isomorphic to $\G_{1}(\Le (G))$.
\end{proof}

We now cast the above results into a more usable form.
A left cancellative category $C$ with a weak terminal identity 1 is called a {\em left rooted category}.
Put 
$$U = \{(a,b) \in C \times C \colon \: \dom (a) = \dom (b), \ran (a) = 1 = \ran (b)\}.$$
Observe that both $a$ and $b$ have codomain 1.
Define a relation $\sim$ on $U$ by
$$(a,b) \sim (a',b') \Leftrightarrow (a,b) = (a',b')u \mbox{ for some isomorphism } u$$
where $(a',b')u = (a'u,b'u)$.
Then $\sim$ is an equivalence relation on $U$. 
Denote the equivalence class containing $(a,b)$ by $[a,b]$,
and the set of equivalence classes by ${\bf G}^{l}(C)$.
We may think of $[a,b]$ as modelling a partial bijection with domain of definition described by $[b,b]$ and with range described by $[a,a]$
both of these being subobjects of 1.
Define
$$\dom [a,b] = [b,b], \quad \ran [a,b] = [a,a] 
\mbox{ and }
[a,b]^{-1} = [b,a].$$
If $\dom [a,b] = \ran [c,d]$.
Then there exists an isomorphism $u$ in $C$ such that $b = cu$.
In this case, define the partial product
$$[a,b] \cdot [c,d] = [a,du].$$
Define a relation $\leq$ on ${\bf G}^{l}(C)$
by 
$$[a,b] \leq [c,d] \Leftrightarrow (a,b) = (c,d)p \mbox{ for some }p \in C.$$
This is well-defined and is a partial order.
In this way, $\G^{l}(C)$ is an ordered groupoid with a maximum identity.
Let $C$ and $D$ be two left rooted categories with weak terminal identities $1_{C}$ and $1_{D}$ respectively.
Let $\theta \colon \: C \rightarrow D$ be an equivalence of categories such that $F(1_{C}) = 1_{D}$.
Then the function
${\bf G}^{l}(\theta) \colon \: {\bf G}^{l}(C) \rightarrow {\bf G}^{l}(D)$
defined by ${\bf G}^{l}(\theta)([a,b]) = [\theta (a),\theta (b)]$
is an isomorphism of ordered groupoids with maximum identities.

We shall say that a left rooted category has {\em all allowable pullbacks}
if whenever $a$ and $b$ are elements of $C$ such that $\ran (a) = \ran (b)$
and that can be completed to a commutative square $aa' = bb'$ for some elements $a'$ and $b'$, 
then $a$ and $b$ have a pullback.
It is worth noting that it is enough to assume this condition for those pairs $a$ and $b$ where in addition $\ran (a) = \ran (b)$.
In this case, $\G^{l} (C)$ is $\ast$-inductive and so when we adjoin a zero we get an inverse monoid with zero.
The ordered groupoid $\G^{l} (C)$ is inductive when $C$ has {\em all pullbacks}.

If $S$ is an inverse monoid with zero, 
we denote by $\mathcal{L}(S)$ the left rooted category of the ordered groupoid of non-zero elements of $S$,
its right rooted category, denoted by $\mathcal{R}(S)$, is defined similarly.
We therefore have the following theorem with which is associated an evident dual theorem referring to right Leech categories.

\begin{theorem} \mbox{}

\begin{enumerate}

\item Let $G$ be an ordered groupoid with a maximum identity.
Then $\Le (G)$ is a left rooted category and $\G^{l}(\Le (G))$ is isomorphic to $G$.

\item Let $C$ be a left rooted category.
Then $\G^{l}(C)$ is an ordered groupoid with a maximum identity
and $\Le (\G^{l}(C))$ is a left rooted category equivalent to $C$. 

\item  Each inverse monoid with zero is determined by a left cancellative category with a weak terminal identity
that has all allowable pullbacks.

\item  Each inverse monoid is determined by a left cancellative category with a weak terminal identity that has all pullbacks.

\end{enumerate}
\end{theorem}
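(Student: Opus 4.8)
The plan is to dispatch (1) by assembling results already in hand, to prove the equivalence in (2) by a fresh embedding argument modelled on Theorem~3.7, and then to read off (3) and (4) from (1) together with Proposition~2.9 and the pullback dictionary recorded just before the theorem.

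For (1), $\Le(G)$ is left cancellative by Proposition~3.5, and the maximum identity $1$ of $G$ gives a weak terminal identity $(1,1)$ of $\Le(G)$ by Lemma~3.11; by definition $\Le(G)$ is then left rooted. The one observation needed for the isomorphism is that $\G^l$ agrees with $\G_1$: by Lemma~3.12 the classes $[a,b]$ lying in $[1,1]\otimes\G(\Le(G))\otimes[1,1]$ are exactly those with $\ran(a)=\ran(b)=1$, and these, with the inherited product and order, are precisely the data defining $\G^l(\Le(G))$. Hence $\G^l(\Le(G))=\G_1(\Le(G))$, and Proposition~3.13 gives $\G_1(\Le(G))\cong G$.

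For (2), that $\G^l(C)$ is an ordered groupoid with maximum identity $[1,1]$ is recorded in the construction preceding the theorem, and applying (1) to $G=\G^l(C)$ shows at once that $\Le(\G^l(C))$ is left rooted. The content is the equivalence $\Le\G^l(C)\simeq C$, which I would establish directly, modelling it on $\iota$ from Theorem~3.7. Using that $1$ is weak terminal, choose for each object $e$ of $C$ an arrow $u_e\colon e\to 1$ with $u_1=1$, and set $\iota^l(a)=([u_{\ran(a)},u_{\ran(a)}],[u_{\ran(a)}a,u_{\dom(a)}])$, a morphism of $\Le\G^l(C)$ from the identity attached to $\dom(a)$ to that attached to $\ran(a)$. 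Functoriality follows from the pseudoproduct formula in $\G^l(C)$ and faithfulness from left cancellativity, exactly as in Theorem~3.7. Every identity of $\Le\G^l(C)$ has the form $([p,p],[p,p])$ with $\ran(p)=1$, and $[p,u_{\dom(p)}]$ is an invertible element of $\G^l(C)$ yielding an invertible arrow of $\Le\G^l(C)$ from $\iota^l(\dom(p))$ to it, so $\iota^l$ is essentially surjective; and a morphism between image objects $\iota^l(e)$ and $\iota^l(f)$ is forced to have the form $([u_f,u_f],[u_f r,u_e])=\iota^l(r)$, so $\iota^l$ is full. Being full, faithful and essentially surjective, $\iota^l$ is an equivalence. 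I expect this to be the main obstacle: unlike (1) it is not a citation but needs the noncanonical choice of the arrows $u_e$ and a genuine check of fullness.

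Finally, (3) and (4) follow formally. Given an inverse monoid $S$, its inductive groupoid $\G(S)$ (Proposition~2.6) has the identity of $S$ as maximum identity, so $C:=\Le(\G(S))$ is a left cancellative category with weak terminal identity, and by (1) $\G^l(C)\cong\G(S)$; since $\G^l(C)$ is inductive its pseudoproduct is everywhere defined, so by the criterion of Lemma~3.2 the category $C$ has all pullbacks, while Proposition~2.9(3) recovers $S\cong\mathbf{S}(\G^l(C))$, giving (4). For an inverse monoid with zero one runs the same argument on the $\ast$-inductive groupoid of its non-zero elements, which again has a maximum identity; the resulting $C$ has all allowable pullbacks and $S$ is recovered by adjoining a zero to $\mathbf{S}(\G^l(C))$, giving (3). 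The only care needed is to handle the zero separately, as flagged in the discussion of $\ast$-inductive groupoids.
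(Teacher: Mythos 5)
Your proposal is correct and follows essentially the same route as the paper: part (1) is assembled from Proposition~3.5, Lemma~3.11, Lemma~3.12 and Proposition~3.13 exactly as intended, parts (3) and (4) are read off from the pullback dictionary preceding the theorem, and your "fresh" functor for part (2) — $\iota^l(a) = ([u_{\ran(a)},u_{\ran(a)}],[u_{\ran(a)}a,u_{\dom(a)}])$ with chosen arrows $u_e\colon e\to 1$, $u_1=1$ — is precisely the paper's $\theta(a) = ([c_f,c_f],[c_fa,c_e])$, verified to be full, faithful and essentially surjective in the same way. Your write-up in fact supplies more detail (the normalisation argument for fullness and the explicit isomorphism $([p,p],[p,u_{\dom(p)}])$ for essential surjectivity) than the paper, which merely asserts these properties.
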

\begin{proof} We prove (2).
For each identity $e$ in $C$ choose an arrow $c_{e} \colon e \rightarrow 1$.
We choose $c_{1} = 1$.
Let $f \stackrel{a}{\longleftarrow} e$ be an element of $C$.
Then $a$ and $c_{e}$ have the same domain.
Thus $c_{f}a$ and $c_{e}$ have both the same domain and range equal to 1.
Hence $[c_{f}a,c_{e}]$ is an element of $\G^{l}(C)$.
Also $[c_{f},c_{f}]$ is an identity of $\G^{l}(C)$. 
By construction $\ran [c_{f}a,c_{e}] \leq [c_{f},c_{f}]$.
Thus $([c_{f},c_{f}], [c_{f}a,c_{e}])$ is an element of $\Le (\G^{l}(C))$.
Define $\theta \colon C \rightarrow  \Le (\G^{l}(C))$
by 
$$\theta (a) =  ([c_{f},c_{f}], [c_{f}a,c_{e}]).$$
The function $\theta$ is a functor that maps the weak terminal identity $1$ to the weak terminal identity $([1,1],[1,1])$
and is full, faithful and essentially surjective.\end{proof}

There is now an obvious question which has a familiar answer.

\begin{proposition}
Let $S$ be an inverse monoid.
Then its associated left rooted category is cancellative if and only if $S$ is $E$-unitary.
\end{proposition}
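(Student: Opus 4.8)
The plan is to reduce the statement to a single, standard property of $S$. Since $\mathcal{L}(S)$ is by construction $\Le({\bf G}(S))$, it is \emph{automatically} left cancellative by Proposition~3.5. Hence $\mathcal{L}(S)$ is cancellative exactly when it is \emph{right} cancellative, and this is all that has to be analysed. Throughout I would use that ${\bf G}(S)$ is the inductive groupoid of $S$, that its identities are the idempotents of $S$, and that the identity $1$ is the maximum identity, so that $(1,x) \in \mathcal{L}(S)$ for every $x$ (Lemma~3.11).

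First I would unwind right cancellativity inside $S$. An equation $(e,x)(f,y) = (e',x')(f,y)$ with both products defined forces ${\bf d}(x) = f = {\bf d}(x')$ and ${\bf r}(y) \leq f$, that is $x^{-1}x = x'^{-1}x' = f$ and $yy^{-1} \leq f$; comparing the products $(e, x \otimes y)$ and $(e', x' \otimes y)$ yields $e = e'$ and $x \otimes y = x' \otimes y$. Because $yy^{-1} \leq x^{-1}x$, the pseudoproduct collapses to the ordinary product, $x \otimes y = xy$, by Lemma~2.1, so the hypothesis is $xy = x'y$. Multiplying on the right by $y^{-1}$ and using $yy^{-1} \leq x^{-1}x$ exhibits $z := xyy^{-1} = x'yy^{-1}$ as a common lower bound of $x$ and $x'$. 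Thus right cancellativity of $\mathcal{L}(S)$ amounts to the implication
$$\text{if } z \leq x,\, z \leq x' \text{ and } x^{-1}x = x'^{-1}x', \text{ then } x = x'.$$

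Next I would recognise this as \emph{star injectivity} of the natural map $\sigma^{\natural} \colon S \to S/\sigma$ onto the maximum group image. Since $x \, \sigma \, x'$ holds precisely when $x$ and $x'$ possess a common lower bound, the displayed implication says exactly that any two elements with equal domain that are identified by $\sigma^{\natural}$ must coincide; regarding $\sigma^{\natural}$ as a functor from ${\bf G}(S)$ to the group $S/\sigma$, this is star injectivity, which by Theorem~2.26 is equivalent to $S$ being $E$-unitary. For the implication $E$-unitary $\Rightarrow$ cancellative I would apply this reading directly to the equation of the previous paragraph. For the converse I must manufacture a cancelling factor: given a common lower bound $z \leq x, x'$ with $x^{-1}x = x'^{-1}x' = f$, put $y = z^{-1}z$, so $yy^{-1} = y \leq f$ and $xy = z = x'y$; then $(1,x),(1,x') \in \mathcal{L}(S)$ and $(1,x)(f,y) = (1,z) = (1,x')(f,y)$, whence right cancellativity gives $(1,x) = (1,x')$ and so $x = x'$.

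The step to handle with care---and the only real obstacle---is the faithful dictionary between the pseudoproduct in $\Le({\bf G}(S))$ and the ordinary product in $S$: verifying $x \otimes y = xy$ when $yy^{-1} \leq x^{-1}x$, and recovering an honest common lower bound $z \leq x, x'$ from $xy = x'y$. Once this is in place, the identification with star injectivity and the appeal to Theorem~2.26 are routine. I would note finally that the maximum identity $1$ is used precisely to fix the range coordinate $e = e' = 1$ when building the cancelling witness, which is exactly where the monoid hypothesis enters.
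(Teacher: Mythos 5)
Your proof is correct, and its computations inside $\mathcal{L}(S)$ are essentially those of the paper: both arguments use the maximum identity to form elements $(1,x)$, both use Lemma~2.1 to collapse the pseudoproduct $x \otimes y$ to the ordinary product $xy$ when $yy^{-1} \leq x^{-1}x$, and both manufacture a cancelling witness of the form $(f,z^{-1}z)$ from a lower bound $z$; indeed the paper's proof that cancellative implies $E$-unitary, which cancels $(1,s)(s^{-1}s,e) = (1,s^{-1}s)(s^{-1}s,e)$, is exactly your construction specialised to $x = s$, $x' = s^{-1}s$, $z = y = e$. Where you genuinely differ is in the Chapter~1 input used to close the argument. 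The paper never mentions $\sigma$: for the direction $E$-unitary implies right cancellative, it takes $su = tu$, deduces that $s^{-1}t$ and $st^{-1}$ are idempotent directly from the definition of $E$-unitary (each lies above an idempotent, e.g.\ $s^{-1}suu^{-1} = s^{-1}tuu^{-1}$), and finishes with Lemma~2.24 (compatible elements with equal domains coincide). You instead isolate the intermediate statement that a common lower bound plus equal domains forces equality, recognise it as star injectivity of the natural map $\sigma^{\natural}$ to the maximum group image, and cite Theorem~2.26 together with the characterisation of $\sigma$ by common lower bounds. The two routes are logically close, since the paper's argument is in effect an inline proof that $E$-unitary implies star injectivity, but yours buys a cleaner conceptual statement (right cancellativity of $\mathcal{L}(S)$ \emph{is} star injectivity of $\sigma^{\natural}$), at the cost of invoking the heavier packaged theorem, while the paper's is more elementary and self-contained, resting only on the definition of $E$-unitary and the compatibility lemma.
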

\begin{proof}
Suppose that $\mathcal{L}(S)$ is cancellative.
Let $e \leq s$.
Then 
$$(1,s)(s^{-1}s,e) = (1,s^{-1}s)(s^{-1}s,e)$$ 
and so by right cancellation $s = s^{-1}s$, an idempotent as required.

Conversely, suppose that $S$ is $E$-unitary.
We prove that  $\mathcal{L}(S)$ is right cancellative.
Let $(e,s)(f,u) = (e,t)(f,u)$.
Then $su = tu$.
It follows that $s^{-1}suu^{-1} = s^{-1}tuu^{-1}$ and so $s^{-1}t$ is an idempotent
using the fact that $S$ is $E$-unitary.
Similarly $st^{-1}$ is an idempotent.
Hence $s$ and $t$ are compatible but also $s^{-1}s = t^{-1}t$.
Thus by Lemma~2.24, we have that $s = t$ as required.
\end{proof}

It is interesting to consider what happens in the case when the inverse monoid is $0$-bisimple because we can then replace categories by monoids.
Let $S$ be $0$-bisimple.
Put $L_{1} = \{s \in S \colon s^{-1}s = 1 \}$.
Then $L_{1}$ is a left cancellative monoid.
It is isomorphic to the local monoid at the identity $(1,1)$ in the left rooted category $\mathcal{L}(S)$
and under our assumption on $S$ is actually equivalent to it.
We therefore have the following. 

\begin{theorem} 
Each $0$-bisimple inverse monoid with zero is determined by a left cancellative monoid
with the property that any two principal right ideals are either disjoint or their intersection is again a principal right ideal.
\end{theorem}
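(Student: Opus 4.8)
The plan is to obtain this statement as the $0$-bisimple specialisation of Theorem~3.14(3): when $S$ is $0$-bisimple the left rooted category $\mathcal{L}(S)$ collapses, up to equivalence, to the single monoid $L_{1}$, and the ``all allowable pullbacks'' hypothesis on $\mathcal{L}(S)$ becomes the stated condition on principal right ideals of $L_{1}$. First I would justify the reduction to $L_{1}$. Recall that $\mathcal{L}(S) = \Le(G)$, where $G$ is the ordered groupoid of non-zero elements of $S$, and that the objects of $\mathcal{L}(S)$ are the pairs $(e,e)$ with $e$ a non-zero idempotent of $S$. Since $S$ is $0$-bisimple, all of its non-zero idempotents are $\D$-related, so all the identities of $G$ lie in one $\D$-class. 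By Proposition~3.5 the invertible elements of $\Le(G)$ form a copy of $G$; consequently any two objects $(e,e)$ and $(f,f)$ of $\mathcal{L}(S)$ are joined by an invertible arrow, that is, any two objects are isomorphic. The inclusion of the full subcategory on the single object $(1,1)$ is therefore an equivalence, and the endomorphism monoid of $(1,1)$ is exactly $L_{1}$, as noted above. Thus $L_{1}$ is a left cancellative monoid equivalent to $\mathcal{L}(S)$.

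Next I would carry out the pullback translation inside the monoid $L_{1}$, viewed as a one-object left cancellative category. A pair $a,b$ can be completed to a commutative square $aa' = bb'$ precisely when $aL_{1} \cap bL_{1} \neq \emptyset$, since any such element $aa' = bb'$ lies in the intersection and conversely. A pullback of $a$ and $b$ is then an element $ap = bq$ with $aL_{1} \cap bL_{1} = apL_{1}$: every element of the intersection factors through it, and left cancellativity supplies the uniqueness required by the universal property. Hence ``$L_{1}$ has all allowable pullbacks'' says exactly that any two principal right ideals of $L_{1}$ are either disjoint or meet in a principal right ideal. Because $\mathcal{L}(S)$ has all allowable pullbacks by Theorem~3.14(3), and an equivalence of categories preserves and reflects both the completability of cospans and their pullbacks, the equivalence $L_{1} \simeq \mathcal{L}(S)$ transports this property to $L_{1}$.

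It remains to run the correspondence in reverse, which is what licenses the word ``determined''. Given a left cancellative monoid $M$ with the stated property, I would regard $M$ as a left rooted category whose unique identity is trivially a weak terminal identity; the right-ideal condition is precisely the ``all allowable pullbacks'' hypothesis, so by Theorem~3.14(3) the monoid $M$ determines an inverse monoid with zero. This inverse monoid is $0$-bisimple: the single object of $M$ makes $\G^{l}(M)$ connected, so all of its identities, and hence all non-zero idempotents of the resulting inverse monoid, fall into one $\D$-class. That the two passages $S \mapsto L_{1}$ and $M \mapsto (\text{inverse monoid})$ are mutually inverse up to isomorphism then follows from Theorem~3.14(1),(2) together with the equivalence established above.

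The step I expect to be the main obstacle is the bookkeeping in the pullback translation: one must verify that the universal property of a pullback in $L_{1}$ matches \emph{exactly} the statement that $aL_{1} \cap bL_{1}$ is the principal right ideal generated by the pullback element, that the word ``allowable'' corresponds precisely to non-disjointness, and that the equivalence $L_{1} \simeq \mathcal{L}(S)$ transports the pullback structure faithfully in both directions. Once these points are pinned down, the categorical collapse supplied by $0$-bisimplicity and the citation of Theorem~3.14 make the remainder routine.
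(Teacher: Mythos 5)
Your proposal is correct and takes essentially the same route as the paper: the paper's own justification is exactly the preceding observation that $L_{1}$ is the local monoid at $(1,1)$ in $\mathcal{L}(S)$ and, by $0$-bisimplicity, equivalent to it, combined with Theorem~3.14(3), where the ``all allowable pullbacks'' condition in a one-object category translates into the principal-right-ideal condition. Your write-up simply makes explicit the details (the equivalence via isomorphic objects, the pullback/right-ideal dictionary, and the reverse construction with its connectedness argument) that the paper leaves implicit.
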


\begin{remark}
{\em What we have proved in this section for inverse monoids can also be generalized to inverse categories}
\end{remark}

\section{Affine systems}\setcounter{theorem}{0}

Inverse monoids and, more generally, ordered groupoids with maximum identities can be described by means of suitable left cancellative categories 
equipped with weak terminal identities or by means of suitable right cancellative categories equipped with weak initial identities.
The problem now is describing arbitrary inverse semigroups or, more generally, arbitrary ordered groupoids.
The solution is similar to what happens when we want to replace vector spaces, which have a distinguished origin, with affine spaces which don't:
we have to work with actions.
In our case, and choosing sides, we work with (right cancellative) categories acting on the left on principal groupoids,
the groupoids arising from equivalence relations.
We shall see that our earlier description of inverse monoids or ordered groupoids with maximum identities is a special case.
In outline, we do the following
\begin{itemize}
\item A category $C$ acts on a principal groupoid $H$ on the left.
\item This action induces a preorder $\preceq$ on $H$ whose associated equivalence relation is $\equiv$.
\item The quotient structure $H/\equiv$ is a groupoid on which the preorder induces an order.
\item The groupoid $H/\equiv$ is ordered and every ordered groupoid is isomorphic to one constructed in this way.
\end{itemize}

\subsection{From category actions to  ordered groupoids}

Before I give the formal definition, I want to motivate it by reconsidering how we defined an ordered groupoid
with identity from a {\em right} cancellative category $C$ with a weak {\em  initial} identity 1.
Incidently, the shift from left to right rooted categories is simply a consequence of the fact that I shall work with {\em left} actions below.
Form the set of ordered pairs
$$U = \{(a,b) \in C \times C \colon \ran (a) = \ran (b), \dom (a) = 1 = \dom (b) \}.$$
We may define a partial action of $C$ on $U$ by defining $c \cdot (a,b) = (ca,cb)$ only when $\dom (c) = \ran (a)$.
If we define $\pi (a,b) = \ran (a)$ then we can rewrite the condition for the action of $c$ on $(a,b)$ to be defined by $\dom (a) = \pi (a,b)$. 
What is $U$?
It is the groupoid corresponding to an equivalence relation defined on the set $C1$:
namely $a$ and $b$ are related if and only if $\ran (a) = \ran (b)$.
Suppose that $(a,b) = c(a',b')$ and $(a',b') = d(a,b)$.
Then because $C$ is right cancellative, the elements $c$ and $d$ are mutually invertible.
It follows that $(a,b) \sim (a',b')$.
Thus the action of $C$ on $U$ can be used to construct the elements of the ordered groupoid $\G^{r}(C)$.
In order to define $U$ in this case we needed a weak initial identity,
but the same construction would go through if we started with the pair $(G,U)$.
By axiomatizing the properties of such pairs leads to the construction of ordered groupoids
from categories acting on principal groupoids which we now describe.

Let $C$ be a category and $G$ a groupoid.
Let $\pi \colon \: G \rightarrow C_{o}$ be a function to the set of identities of $C$.
Define
$$C \ast G = \{(a,x) \in C \times G \colon \: \dom (a) = \pi (x)   \}.$$
We say that $C$ {\em acts on} $G$ if there is a function from
$C \ast G$ to $G$, denoted by $(a,x) \mapsto a \cdot x$, which satisfies the axioms (A1)--(A6) below.
Note that I write $\exists a \cdot x$ to mean that $(a,x) \in C \ast G$.
I shall also use $\exists$ to denote the existence of products in the categories
$C$ and $G$.
\begin{description}
\item[{\rm (A1)}] $\exists \pi (x) \cdot x$ and $\pi (x) \cdot x = x$.
\item[{\rm (A2)}] $\exists a \cdot x$ implies that $\pi (a \cdot x) = \ran (a)$.
\item[{\rm (A3)}] $\exists a \cdot (b \cdot x)$ iff $\exists (ab) \cdot x$,
and if they exist they are equal.
\item[{\rm (A4)}] $\exists a \cdot x$ iff $\exists a \cdot \dom (x)$, and if they exist then
$\dom (a \cdot x) = a \cdot \dom (x)$;
$\exists a \cdot x$ iff $\exists a \cdot \ran (x)$, 
and if they exist then $\ran (a \cdot x) = a \cdot \ran (x)$.
\item[{\rm (A5)}] If $\pi (x) = \pi (y)$ and $\exists xy$ then $\pi (xy) = \pi (x)$.
\item[{\rm (A6)}] If $\exists a \cdot (xy)$ then $\exists (a \cdot x)(a \cdot y)$
and $a \cdot (xy) = (a \cdot x)(a \cdot y)$.
\end{description}

We write $(C,G)$ to indicate the fact that $C$ acts on $G$.
If $C$ acts on $G$ and $x \in G$ define
$$C \cdot x = \{a \cdot x \colon \: \exists a \cdot x \}.$$
Define $x \preceq y$ in $G$ iff there exists $a \in C$ such that $x = a \cdot y$.
The relation $\preceq$ is a preorder on $G$.
Let $\equiv$ be the associated equivalence:
$x \equiv y$ iff $x \preceq y$ and $y \preceq x$.
Observe that $x \preceq y$ iff $C \cdot x \subseteq C \cdot y$.
Thus $x \equiv y$ iff $C \cdot x = C \cdot y$.
Denote the $\equiv$-equivalence class containing $x$ by $[x]$,
and denote the set of $\equiv$-equivalence classes by $J(C,G)$.
The set $J(C,G)$ is ordered by $[x] \leq [y]$ iff $x \preceq y$.

\begin{remarks}{\em \mbox{}
\begin{enumerate}

\item Axioms (A1), (A2) and (A3) are the usual axioms for the action of a category on a set.

\item If $x \equiv y$ then ${\bf d}(x) \equiv {\bf d}(y)$ and ${\bf r}(x) \equiv {\bf r}(y)$ by axiom (A4).

\item For each $e \in C_{o}$ put $G_{e} = \pi^{-1}(e)$.
Let $x \in G_{e}$.
Then $\pi (x) = e$.
Thus $\exists e \cdot x$ and so by axiom (A4), we also have that $\exists e \cdot \dom (x)$ and $\exists e \cdot \ran (x)$.
Thus $\dom (x), \ran (x) \in G_{e}$.
Also $\exists \dom (x)$ means that $e \cdot (x^{-1} x)$ and so by axiom (A6),
we have that $\exists e \cdot x^{-1}$ and so $x^{-1} \in G_{e}$.
By axiom (A5), if $x,y \in G_{e}$ and $xy$ is defined then $xy \in G_{e}$.
It follows that $G_{e}$ is a subgroupoid of $G$,
and by axiom (A4) it must be a union of connected components of $G$.

If $x$ is an identity in $G$ and $\exists a \cdot x$ then $a \cdot x$ is an identity in $G$.
This follows by (A4), since ${\bf d}(a \cdot x) = a \cdot {\bf d}(x) = a \cdot x$.
Combining this with axiom (A6), we see that if $f \stackrel{a}{\longleftarrow} e$ in $C$, then the function
$x \mapsto a \cdot x$ from $G_{e}$ to $G_{f}$ is a functor.

\end{enumerate}
}
\end{remarks}

We shall be interested in actions of categories $C$ on groupoids $G$ that satisfy two further conditions:
\begin{description}
\item[{\rm (A7)}] $G$ is principal.
\item[{\rm (A8)}] $\dom (a \cdot x) = \dom (b \cdot x)$ iff $\ran (a \cdot x) = \ran (b \cdot x)$.
\end{description}
Condition (A7) is to be expected;
condition (A8) will make everything work, as will soon become clear.
The axioms (A7) and (A8) together imply that if $\dom (a \cdot x) = \dom (b \cdot x)$ then $a \cdot x = b \cdot x$.

\begin{theorem} Let $C$ be a category acting on the groupoid $G$,
and suppose in addition that both (A7) and (A8) hold.
Then

\begin{enumerate}
\item $J(C,G)$ is an ordered groupoid.
\item $J(C,G)$ is $\ast$-inductive iff for all identities $e,f \in G$ we have that 
$C \cdot e \cap C \cdot f$ non-empty implies there exists an identity $i$ 
such that $C \cdot e \cap C \cdot f = C \cdot i$.
\end{enumerate}

\end{theorem}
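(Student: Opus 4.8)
The plan is to endow $J(C,G)$ with the groupoid operations inherited from $G$, check they descend to $\equiv$-classes, verify the ordered-groupoid axioms, and then reduce $\ast$-inductivity to the stated condition on the sets $C\cdot e$. The workhorse throughout is the consequence of \textrm{(A7)} and \textrm{(A8)} recorded just above the theorem: if $\dom(a\cdot x)=\dom(b\cdot x)$ then $a\cdot x=b\cdot x$. I will refer to this as the \emph{key consequence}.

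For part (1) I would first set $\dom[x]=[\dom(x)]$, $\ran[x]=[\ran(x)]$ and $[x]^{-1}=[x^{-1}]$. The first two are well defined by the second of the Remarks following the action axioms; the inverse is well defined because each map $z\mapsto a\cdot z$ is a functor on the relevant fibre (as noted in those Remarks), so $x=a\cdot y$ forces $x^{-1}=a\cdot y^{-1}$, giving $x^{-1}\preceq y^{-1}$, and symmetry finishes it. The substantive point is the partial product. When $\dom[x]=\ran[y]$, i.e.\ $\dom(x)\equiv\ran(y)$, I would pick $a$ with $\dom(x)=a\cdot\ran(y)$ and replace $y$ by $y'=a\cdot y$; then $\ran(y')=\dom(x)$, so $xy'$ is defined in $G$, while comparing ranges and invoking \textrm{(A8)} together with the key consequence shows $y'\equiv y$. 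I then set $[x][y]=[xy']$. To see this is independent of all choices, I would take matched pairs $x_{1}y_{1}$ and $x_{2}y_{2}$ with $x_{1}\equiv x_{2}$, $y_{1}\equiv y_{2}$, write $x_{1}=a\cdot x_{2}$, $y_{1}=b\cdot y_{2}$, and deduce $a\cdot y_{2}=b\cdot y_{2}=y_{1}$ by comparing the ranges $\ran(a\cdot y_{2})=\dom(x_{1})=\ran(b\cdot y_{2})$, applying \textrm{(A8)} and the key consequence; then \textrm{(A6)} gives $a\cdot(x_{2}y_{2})=x_{1}y_{1}$, so $x_{1}y_{1}\preceq x_{2}y_{2}$, and symmetry yields $\equiv$.

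The groupoid identity and inverse laws then descend from $G$ once products are matched, and for associativity I would choose representatives $x',y',z'$ with $\dom(x')=\ran(y')$ and $\dom(y')=\ran(z')$ simultaneously, so that $(x'y')z'=x'(y'z')$ already holds in $G$. For the order axioms: \textrm{(OG1)} is the inverse computation above. For \textrm{(OG2)} I would match representatives so that $\dom(x)=\ran(u)$ and $\dom(y)=\ran(v)$; writing $x=a\cdot y$ and $u=b\cdot v$, the equality $\dom(x)=\ran(u)$ forces $a\cdot\dom(y)=b\cdot\dom(y)$, whence $a\cdot v=b\cdot v=u$ by \textrm{(A8)} and the key consequence, so $a\cdot(yv)=(a\cdot y)(a\cdot v)=xu$ and $xu\preceq yv$. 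For \textrm{(OG3)}, given an identity $e$ of $G$ with $e\preceq\dom(x)$, I write $e=a\cdot\dom(x)$ and set $([x]\,|\,[e])=[a\cdot x]$, which has domain $[e]$ and lies below $[x]$; any competitor $z=c\cdot x$ with $\dom(z)\equiv e$ satisfies $\dom(c\cdot x)=\dom((sa)\cdot x)$ for a suitable $s$, so $c\cdot x\equiv a\cdot x$ by the key consequence, giving uniqueness. Axiom \textrm{(OG3)}$^{\ast}$ is then free from \textrm{(OG1)} and \textrm{(OG3)} by Lemma~2.4(6), so $J(C,G)$ is an ordered groupoid.

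For part (2) I would first record that the identities of $J(C,G)$ are exactly the classes $[e]$ with $e\in G_{o}$, and that $[g]\leq[e]$ iff $C\cdot g\subseteq C\cdot e$. Two such identities have a common lower bound precisely when $C\cdot e\cap C\cdot f\neq\emptyset$: any $z$ in the intersection is an identity (being $a\cdot e$, the image of an identity under \textrm{(A4)}) with $[z]\leq[e],[f]$, and conversely a common lower bound $g$ yields $g\in C\cdot e\cap C\cdot f$. I would then show the greatest lower bound exists iff $C\cdot e\cap C\cdot f=C\cdot i$ for some identity $i$: if the intersection equals $C\cdot i$ then $[i]$ is a lower bound and every identity $g$ below $[e],[f]$ has $C\cdot g\subseteq C\cdot i$, so $[i]$ is the meet; conversely if $[i]=[e]\wedge[f]$ then $C\cdot i\subseteq C\cdot e\cap C\cdot f$, while every $z$ in the intersection has $[z]\leq[i]$, hence $z\in C\cdot z\subseteq C\cdot i$ by \textrm{(A1)}, giving the reverse inclusion. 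Combining these two equivalences is exactly the asserted criterion.

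The hard part will not be any single computation but the repeated, disciplined use of the key consequence to pin down representatives: almost every well-definedness and uniqueness claim reduces to showing that two elements $a\cdot x$ and $b\cdot x$ sharing a domain coincide, and the delicate work is arranging in each case that the relevant domains (or, via \textrm{(A8)}, ranges) genuinely agree before invoking it. Organizing the proof so that representatives can always be matched up inside $G$, rather than juggling several witnesses at once, is what makes \textrm{(OG2)} and the uniqueness half of \textrm{(OG3)} go through cleanly.
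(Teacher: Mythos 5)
Your proposal follows essentially the same route as the paper's proof: the same induced structure on $\equiv$-classes, the same characterization of when the partial product is definable via matched representatives, the same repeated use of the (A7)+(A8) ``key consequence'' for well-definedness and for (OG2)/(OG3), appeal to Lemma~2.4(6) for (OG3)$^{\ast}$, and the same translation of meets of identities into the condition $C\cdot e\cap C\cdot f=C\cdot i$ for part (2). The one compression worth noting is in your (OG3) uniqueness step: the key consequence applied to $\dom(c\cdot x)=\dom((sa)\cdot x)$ yields only $c\cdot x=s\cdot(a\cdot x)$, i.e.\ $c\cdot x\preceq a\cdot x$; the reverse relation $a\cdot x\preceq c\cdot x$ requires the symmetric argument using the other half of $\dom(z)\equiv e$, exactly as the paper does — the same technique, so this is a compression rather than a gap.
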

\begin{proof}
(1) Define 
$$\dom [x] = [\dom (x)] 
\mbox{ and }
\ran [x] = [\ran (x)].\footnote{Strictly speaking, I should write 
$\dom ([x])$ but I shall omit the outer pair of brackets.}$$
These are well-defined by Remarks~4.1(2).

We claim that $\dom [x] = \ran [y]$ iff
there exists $x' \in [x]$ and $y' \in [y]$ such that $\exists x'y'$.
To prove this, suppose first that $\dom [x] = \ran [y]$.
Then $\dom (x) \equiv \ran (y)$.
There exist elements $a,b \in C$ such that
$\dom (x) = a \cdot \ran (y)$ and $\ran (y) = b \cdot \dom (x)$.
Thus by (A3) and (A4), we have that
$$\ran (b \cdot (a \cdot y)) = \ran (y).$$
By (A8), this implies that 
$$\dom (b \cdot (a \cdot y)) = \dom (y).$$
By (A7), this means that $y = b \cdot (a \cdot y)$.
Hence $y \equiv a \cdot y$ and $\exists x (a \cdot y)$, as required.
The converse follows by Remarks~4.1(2).

We define a partial product on $J(C,G)$ as follows:
if $\dom [x] = \ran [y]$ then 
\begin{center}
$[x][y] = [x'y']$ where $x' \in [x]$, $y' \in [y]$ and $\exists x'y'$,
\end{center}
otherwise the partial product is not defined.
To show that it is well-defined we shall use (A7) and (A8).
Let $x'' \in [x]$ and $y'' \in [y]$ be such that $\exists x''y''$.
We show that $x'y' \equiv x''y''$.
By definition there exist $a,b,c,d \in C$ such that
$$
x' = a \cdot x, 
\quad
x = b \cdot x',
\quad
x'' = c \cdot x,
\quad
x = d \cdot x''
$$
and there exist $s,t,u,v \in C$ such that
$$
y' = s \cdot y,
\quad
y = t \cdot y',
\quad
y'' = u \cdot y,
\quad
y = v \cdot y''.$$
Now
$x =  b \cdot x'$ and $x'' = c \cdot x$.
Thus $x'' = (cb) \cdot x'$ by (A3).
Now $\exists x'y'$ and so 
$\pi (x'y') = \pi (x')$ by (A5).
Thus $\exists (cb) \cdot (x'y')$.
Hence 
$(cb) \cdot (x'y') = [(cb) \cdot x'][(cb) \cdot y']$ by (A6) which is $x''[(cb) \cdot y']$.
We shall show that $(cb) \cdot y' = y''$, which proves that $x''y'' \preceq x'y'$;
the fact that $x'y' \preceq x''y''$ holds by a similar argument so that $x'y' \equiv x''y''$ as required.
It therefore only remains to prove that $(cb) \cdot y' = y''$.
We have that
$y'' = (ut) \cdot y'$ and $\dom (x'') = \ran (y'')$.
Thus
$\dom (x'') = \ran (y'') = (ut) \cdot \ran (y')$ by (A4).
But $\dom (x'') = (cb) \cdot \ran (y')$.
Thus $(ut) \cdot \ran (y') = (cb) \cdot \ran (y')$.
Hence
$$\ran ((ut) \cdot y') = \ran ((cb) \cdot y')$$
by (A4).
By axioms (A7) and (A8), it follows that 
$(cb) \cdot y' = (ut) \cdot y' = y''$
and so the partial product is well-defined.

Thus $J(C,G)$ is a groupoid in which $[x]^{-1} = [x^{-1}]$, and the identities are the elements of the form $[x]$ where $x \in G_{o}$.
The order on $J(C,G)$ is defined by $[x] \leq [y]$ iff $x = a \cdot y$ for some $a \in C$.
It remains to show that $J(C,G)$ is an ordered groupoid with respect to this order.

(OG1) holds: let $[x] \leq [y]$.
Then $x = a \cdot y$.
By axiom (A4), we have that $\dom (x) = a \cdot \dom (y)$.
Thus $x^{-1}x = a \cdot (y^{-1}y)$.
By axiom (A6), $x^{-1}x = (a \cdot y^{-1})(a \cdot y)$.
Similarly $xx^{-1} = (a \cdot y)(a \cdot y^{-1})$.
But $G$ is a principal groupoid and so $x^{-1 } = (a \cdot y^{-1})$ and so $[x^{-1}] \leq [y^{-1}]$, as required.

(OG2) holds: let $[x] \leq [y]$ and $[u] \leq [v]$ and suppose that the partial products $[x][u]$ and $[y][v]$ exist.
Then there exist $x' \in [x]$, $u' \in [u]$, $y' \in [y]$ and $v' \in [v]$ such that
$[x][u] = [x'u']$ and $[y][v] = [y'v']$.
By assumption, $[x'] \leq [y']$ and $[u'] \leq [v']$ so that there exist $a,b \in C$ such that
$x' = a \cdot y'$ and $u' = b \cdot v'$.
We need to show that $x'u' \preceq y'v'$.
Now $\dom (x') = \ran (u')$ and so $a \cdot \dom (y') = b \cdot \ran (v')$.
But $\dom (y') = \ran (v')$.
Thus
$a \cdot \dom (y') = b \cdot \dom (y')$.
Hence
$$\dom (a \cdot y') = \dom (b \cdot y').$$
By (A8), we therefore have that
$$\ran (a \cdot y') = \ran (b \cdot y'),$$
and so $a \cdot y' = b \cdot y'$ by (A7).
Thus
$x'u' = (a \cdot y')(b \cdot v') = (b \cdot y')(b \cdot v')$.
Now $\exists y'v'$ and so by (A5) and (A6) we have that
$(b \cdot y')(b \cdot v') = b \cdot (y'v')$.
Thus $x'u' = b \cdot (y'v')$ and so $x'u' \preceq y'v'$, as required.

(OG3) holds: let $[e] \leq \dom [x]$ where $e \in G_{o}$.
Then $e \preceq \dom (x)$ and so $e = a \cdot \dom (x)$ for some $a \in C$.
Now $\exists a \cdot x$ by (A4).
Define 
$$([x]\,|\,[e]) = [a \cdot x].$$
Clearly $[a \cdot x] \leq [a]$, 
and $\dom [a \cdot x] = [a \cdot \dom (x)] = [e]$.
It is also unique with these properties as we now show.
Let $[y] \leq [x]$ such that $\dom [y] = [e]$.
Then $y = b \cdot x$ for some $b \in C$ and $\dom (y) \equiv e$.
Because of the latter, there exists $c \in C$ such that $e = c \cdot \dom (y)$.
Thus $e = (cb) \cdot \dom (x)$.
But $e = a \cdot \dom (x)$ and so $(cb) \cdot \dom (x) = a \cdot \dom (x)$.
By (A7) and (A8), we therefore have that $c \cdot y = a \cdot x$.
It follows that we have shown that $a \cdot x \preceq y$.
From $\dom (y) \equiv e$, there exists $d \in C$
such that $\dom (y) = d \cdot e$.
Using (A7) and (A8), we can show that $y = d \cdot (a \cdot x)$,
and so $y \preceq a \cdot x$.
We have therefore proved that $y \equiv a \cdot x$.
Hence $[y] = [a \cdot x]$, as required.

(OG3)$^{\ast}$ holds: although this axiom follows from the others,
we shall need an explicit description of the corestriction.
Let $[e] \leq \ran [x]$ where $e \in G_{o}$.
Then $e \preceq \ran (x)$ and so $e = b \cdot \ran (x)$ for some $b \in C$.
Now $\exists b \cdot x$ by (A4).
Define
$$([e]\,|\,[x]) = [b \cdot x].$$
The proof that this has the required properties is similar to the one above.

(2) We now turn to the properties of the pseudoproduct in $J(C,G)$.
Let $[e],[f]$ be a pair of identities in $J(C,G)$.
It is immediate from the definition of the partial order that 
$[e]$ and $[f]$ have a lower bound iff $C \cdot e \cap C \cdot f \neq \emptyset$.
Next, a simple calculation shows that
$[i] \leq [e],[f]$ iff $C \cdot i \subseteq C \cdot e \cap C \cdot f$.
It is now easy to deduce that
$[i] = [e] \wedge [f]$ iff $C \cdot i = C \cdot e \cap C \cdot f$.

It will be useful to have a description of the pseudoproduct itself.
If $C \cdot i = C \cdot e \cap C \cdot f$
then denote by 
\begin{center}
$e \ast f$ and $f \ast e$
\end{center} 
elements of $C$, not necessarily unique, 
such that 
$$i = (e \ast f) \cdot f = (f \ast e) \cdot e.$$
Suppose that $[x],[y]$ are such that the pseudoproduct $[x]\otimes [y]$ exists.
Then by definition $[\dom (x)] \wedge [\ran (y)]$ exists.
Thus $C \cdot \dom (x) \cap C \cdot \ran (y) = C \cdot e$ for some $e \in G_{o}$.
It follows that
$$[x] \otimes [y] = ([x]\,|\,[e])([e]\,|\,[y]).$$ 
Now 
$$([x]\,|\,[e]) = [(\ran (y) \ast \dom (x)) \cdot x]$$
and
$$([e]\,|\,[y]) = [(\dom (x) \ast \ran (y))\cdot y].$$ 
Hence
$$[x] \otimes [y] 
=
[((\ran (y) \ast \dom (x)) \cdot x)
((\dom (x) \ast \ran (y))\cdot y)
].
$$
\end{proof}

The condition that if $C \cdot e \cap C \cdot f$ is non-empty,
where $e$ and $f$ are identities,
then there exists an identity $i$ such that $C \cdot e \cap C \cdot f = C \cdot i$
will be called the {\em orbit condition} for the pair $(C,G)$.
Part (2) of Theorem~4.1 can therefore be stated thus:
$J(C,G)$ is $\ast$-inductive iff $(C,G)$ satisfies the orbit condition.

\subsection{Universality of the construction}

In this section, I shall show that every ordered groupoid is isomorphic to 
one of the form $J(C,H)$ for some action of a category $C$ on a principal groupoid $H$.

Let $G$ be an ordered groupoid.
There are three ingredients needed to construct $J(C,H)$:
a category, which I shall denote by $\mathbf{R}(G)$, a principal groupoid, which I shall denote by $R(G)$, 
and a suitable action of the former on the latter.
We define these as follows:
\begin{itemize}

\item We define the category $\R (G)$ as in Section~3 as follows:
an element of $\R (G)$ is an ordered pair $(x,e)$ 
where $(x,e) \in G \times G_{o}$ and $\dom (x) \leq e$.
We define a partial product on $\R (G)$ as follows:
if $(x,e),(y,f) \in \R (G)$ and $e = \ran (y)$ then
$(x,e)(y,f) = (x \otimes y, f)$.
Thus $\R (G)$ is a right cancellative category with identities $(e,e) \in G_{o} \times G_{o}$.

\item We define the groupoid $R(G)$ as follows:
its elements are pairs $(x,y)$ where $\ran (x) = \ran (y)$.
Define $\dom (x,y) = (y,y)$ and $\ran (x,y) = (x,x)$.
The partial product is defined by $(x,y)(y,z) = (x,z)$.
Evidently, $R(G)$ is the groupoid associated with the equivalence relation
that relates $x$ and $y$ iff $\ran (x) = \ran (y)$.

\item We shall now define what will turn out to be an action of $\R (G)$ on $R(G)$.
Define $\pi \colon \: R(G) \rightarrow \R (G)_{o}$ by $\pi (x,y) = (\ran (x),\ran (y))$,
a well-defined function.
Define $(g,e) \cdot (x,y) = (g \otimes x, g \otimes y)$ iff
$e = \ran (x) = \ran (y)$.
This is a well-defined function from $\R (G) \ast R(G)$ to $R(G)$.
\end{itemize}

\begin{proposition} Let $G$ be an ordered groupoid.
With the above definition, the pair $(\R (G),R(G))$ satisfies axioms (A1)--(A8).
\end{proposition}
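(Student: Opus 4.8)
The plan is to verify the eight axioms one at a time, exploiting a single simplification that trivialises most of them. Because a morphism $(g,e)$ of $\R(G)$ satisfies $\dom(g)\le e$ and the action $(g,e)\cdot(x,y)$ is defined exactly when $e=\ran(x)=\ran(y)$, every pseudoproduct $g\otimes x$ that occurs has $\dom(g)\le\ran(x)$. Hence $\dom(g)\wedge\ran(x)=\dom(g)$, and $g\otimes x=(g\,|\,\dom(g))(\dom(g)\,|\,x)=g\,(\dom(g)\,|\,x)$ is an ordinary restricted product with $\ran(g\otimes x)=\ran(g)$. I would record this identity first, since it is the workhorse for the whole argument.

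With this in hand, (A1), (A2), (A4), (A5) and (A6) become direct computations. For (A1), $\pi(x,y)=(\ran(x),\ran(x))$ acts by $\ran(x)\otimes x=\ran(x)\cdot x=x$, so $\pi(x,y)\cdot(x,y)=(x,y)$. For (A2), $\pi((g,e)\cdot(x,y))=(\ran(g\otimes x),\ran(g\otimes x))=(\ran(g),\ran(g))=\ran(g,e)$, using $\ran(g\otimes x)=\ran(g)$. For (A4) the point is that membership of $(x,y)$ in $R(G)$ forces $\ran(x)=\ran(y)$, so $\exists (g,e)\cdot(x,y)$ and $\exists (g,e)\cdot\dom(x,y)$ are governed by the same condition $e=\ran(y)$, and $\dom((g,e)\cdot(x,y))=(g\otimes y,g\otimes y)=(g,e)\cdot(y,y)$; the range statement is symmetric. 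Axiom (A5) holds because $\pi(x,y)$ depends only on the common range $\ran(x)=\ran(y)$, which is preserved under $(x,y)(y,v)=(x,v)$; and (A6) holds because the action is applied coordinatewise, so $(g,e)\cdot\big((x,y)(y,v)\big)=(g\otimes x,g\otimes v)=\big((g,e)\cdot(x,y)\big)\big((g,e)\cdot(y,v)\big)$. Axiom (A3) is associativity of the action: both $\exists(g,e)\cdot\big((h,f)\cdot(x,y)\big)$ and $\exists\big((g,e)(h,f)\big)\cdot(x,y)$ reduce to the conditions $e=\ran(h)$ and $f=\ran(x)=\ran(y)$ (again using $\ran(h\otimes x)=\ran(h)$), and the equality $g\otimes(h\otimes x)=(g\otimes h)\otimes x$ is exactly Lemma~2.8. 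Axiom (A7) is immediate, since $R(G)$ is by construction the groupoid of an equivalence relation and hence principal.

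The real work is (A8). Writing $(g,e)\cdot(x,y)=(g\otimes x,g\otimes y)$ and $(h,e)\cdot(x,y)=(h\otimes x,h\otimes y)$, the two sides of (A8) unwind to $g\otimes y=h\otimes y$ and $g\otimes x=h\otimes x$ respectively. I would therefore reduce (A8) to the following cancellation lemma: \emph{for fixed $x$ and any $g$ with $\dom(g)\le\ran(x)$, the element $g$ is recoverable from $p=g\otimes x$, so the map $g\mapsto g\otimes x$ is injective.} Granting this, $g\otimes x=h\otimes x$ and $g\otimes y=h\otimes y$ are each equivalent to $g=h$ (here $\dom(g),\dom(h)\le e=\ran(x)=\ran(y)$), so both hold together, which is (A8).

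To prove the lemma, set $p=g\,(\dom(g)\,|\,x)$; then $\dom(p)=\dom(\dom(g)\,|\,x)\le\dom(x)$, and I claim $(\dom(g)\,|\,x)=(x\,|\,\dom(p))$. Indeed both are elements below $x$ with domain $\dom(p)$, so they agree by the uniqueness clause of (OG3). Consequently $\dom(g)=\ran(\dom(g)\,|\,x)=\ran(x\,|\,\dom(p))$ is determined by $p$ and $x$, and then $g=p\,(\dom(g)\,|\,x)^{-1}=p\,(x\,|\,\dom(p))^{-1}$, a formula depending only on $p$ and $x$; this gives injectivity. The one subtlety to get right is exactly this last point: one must see that $\dom(g)$ is itself recovered before the formula for $g$ is meaningful, and the identification $(\dom(g)\,|\,x)=(x\,|\,\dom(p))$ is what supplies it. Everything else is bookkeeping around the basic simplification $g\otimes x=g\,(\dom(g)\,|\,x)$.
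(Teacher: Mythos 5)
Your proof is correct, but it handles the one nontrivial axiom, (A8), by a genuinely different route from the paper. The paper dispenses with (A1)--(A7) as routine (your detailed checks match what is intended) and proves (A8) directly as an implication: from $s \otimes x = t \otimes x$ it deduces $s \otimes y = t \otimes y$ by noting that $y = x(x^{-1}y)$ and applying the associativity of the pseudoproduct (Lemma~2.8) to the triples $(s,x,x^{-1}y)$ and $(t,x,x^{-1}y)$, so that $s \otimes y = (s\otimes x)\otimes(x^{-1}y) = (t\otimes x)\otimes(x^{-1}y) = t \otimes y$; at no point does it claim $s = t$. You instead prove the stronger statement that for fixed $x$ the map $g \mapsto g \otimes x$ is injective on $\{g : \dom(g) \leq \ran(x)\}$, by recovering $g$ from $p = g \otimes x$: your identification $(\dom(g)\,|\,x) = (x\,|\,\dom(p))$ via the uniqueness clause of (OG3) is exactly right, and then $g = p\,(x\,|\,\dom(p))^{-1}$, so both sides of (A8) become equivalent to $g = h$. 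The paper's argument is shorter because it leans on the already-established Lemma~2.8; yours buys something extra: the injectivity you prove is precisely the \emph{right cancellation condition} ($a \cdot x = b \cdot x$ implies $a = b$) that the paper invokes without proof immediately after Theorem~4.4 when it asserts that $(\R(G),R(G))$ may be taken to be an affine system. So your cancellation lemma is not wasted work --- it is the ingredient the paper defers --- and your reduction of $g \otimes x$ to the restricted product $g\,(\dom(g)\,|\,x)$ is a clean workhorse that also streamlines (A1)--(A7).
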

\begin{proof}
The verification of axioms (A1)--(A7) is routine.
We show explicitly that (A8) holds.
Suppose that
$$\ran [(s,e) \cdot (x,y)]  = \ran [(t,e) \cdot (x,y)].$$ 
Then $s \otimes x = t \otimes x$.
The groupoid product $x^{-1}y$ is defined,
and the two ways of calculating the pseudoproduct of the triple $(s,x,x^{-1}y)$ 
are defined, and the two ways of calculating the pseudoproduct of the triple
$(t,x,x^{-1}y)$ are defined.
It follows that $s \otimes y = t \otimes y$;
that is,
$$\dom [(s,e) \cdot (x,y)]  = \dom [(t,e) \cdot (x,y)].$$ 
The converse is proved similarly.
\end{proof}

The next theorem establishes what we would hope to be true is true.

\begin{theorem} Let $G$ be an ordered groupoid.
Then $J(\R (G),R(G))$ is isomorphic to $G$.
\end{theorem}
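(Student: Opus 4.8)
The plan is to define the candidate isomorphism by the manifestly natural formula $\Phi \colon J(\R(G),R(G)) \to G$, $\Phi[(x,y)] = x^{-1}y$; this makes sense because every element $(x,y)$ of $R(G)$ satisfies $\ran(x) = \ran(y)$, so the groupoid product $x^{-1}y$ is defined in $G$. The first task is to show $\Phi$ is well defined on $\equiv$-classes, and in fact I would prove the stronger statement that $(x,y) \preceq (x',y')$ implies $x^{-1}y \leq x'^{-1}y'$, which simultaneously gives well-definedness and order-preservation. Here one unwinds the action: if $(x,y) = (g,e)\cdot(x',y')$ then, writing $h = \dom(g)$ and using $h \leq \ran(x') = \ran(y')$, the defining pseudoproducts collapse to genuine groupoid products $x = g\,(h\,|\,x')$ and $y = g\,(h\,|\,y')$. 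Cancelling $g^{-1}g = \dom(g)$ gives $x^{-1}y = (h\,|\,x')^{-1}(h\,|\,y')$, and since $(h\,|\,x') \leq x'$ and $(h\,|\,y') \leq y'$, axioms (OG1) and (OG2) yield $x^{-1}y \leq x'^{-1}y'$.

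Next I would verify that $\Phi$ is a functor of groupoids. Using $\dom(x,y) = (y,y)$ and $\ran(x,y) = (x,x)$ in $R(G)$, together with $\Phi[(y,y)] = \dom(y)$ and $\Phi[(x,x)] = \dom(x)$, the identities $\dom \Phi = \Phi\,\dom$ and $\ran \Phi = \Phi\,\ran$ are immediate, as is $\Phi([(x,y)]^{-1}) = y^{-1}x = (x^{-1}y)^{-1}$. For products one chooses composable representatives in $R(G)$ via $(a,b)(b,c) = (a,c)$ (legitimate since Theorem~4.1 already guarantees the partial product in $J$ is representative-independent); a short computation identifies the composite class as $[(\dom(x),(x^{-1}y)(u^{-1}v))]$, on which $\Phi$ returns exactly $(x^{-1}y)(u^{-1}v)$.

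For bijectivity the key lemma is that every class has the canonical representative $(x,y) \equiv (\dom(x),\, x^{-1}y)$. I would prove both halves by exhibiting the acting elements explicitly: $(x^{-1},\ran(x)) \in \R(G)$ sends $(x,y)$ to $(\dom(x),x^{-1}y)$, and $(x,\dom(x))$ sends it back, the pseudoproducts reducing in each case to restricted products. This shows that $z \mapsto [(\ran(z),z)]$ is a two-sided inverse of $\Phi$, since $\Phi[(\ran(z),z)] = z$ and the canonical form identifies $[(x,y)]$ with $[(\ran(x^{-1}y),x^{-1}y)]$. Finally, to see that $\Phi$ is an \emph{order} isomorphism it remains to check that this inverse is order-preserving, i.e.\ $z \leq z'$ implies $[(\ran(z),z)] \leq [(\ran(z'),z')]$; this is witnessed by $(\ran(z),\ran(z')) \in \R(G)$, using that the corestriction $(\ran(z)\,|\,z')$ equals $z$ precisely because $z \leq z'$ with $\ran(z) \leq \ran(z')$. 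Combined with the order-preservation of $\Phi$ from the first step, this makes $\Phi$ an isomorphism of ordered groupoids.

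I expect the main obstacle to be the first step: correctly reducing the pseudoproducts $g \otimes x'$ built into the action to honest groupoid products $g\,(\dom(g)\,|\,x')$, and then recognising the telescoping $(ga)^{-1}(gb) = a^{-1}b$, so that (OG2) applies. Once the pseudoproducts are tamed in this way, every remaining verification becomes a short restriction/corestriction calculation.
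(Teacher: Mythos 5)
Your proposal is correct, but it is organized around the map in the opposite direction from the paper's, and this changes which lemmas carry the weight. The paper defines $\alpha \colon G \rightarrow J(\R(G),R(G))$ by $\alpha(g) = [(\ran(g),g)]$ and verifies injectivity, surjectivity, functoriality and the two order conditions separately; since $\alpha$ lands in the quotient, no well-definedness issue ever arises, but in exchange the paper must prove injectivity by a direct computation with $\equiv$-witnesses, and must exhibit explicit equivalences such as $(g,g) \equiv (\dom(g),\dom(g))$ and $(\ran(h),h) \equiv (g,gh)$ to see that $\alpha$ preserves products. Your map $\Phi[(x,y)] = x^{-1}y$ is exactly the inverse of $\alpha$, and your central lemma --- that $(x,y) \preceq (x',y')$ forces $x^{-1}y \leq x'^{-1}y'$, proved by collapsing the pseudoproducts $g \otimes x'$ to $g\,(\dom(g)\,|\,x')$ and telescoping --- has no counterpart in the paper; it does triple duty, giving well-definedness, order-preservation, and (via representative-independence of the product in $J$) the identity $\Phi([x][y]) = \Phi[x]\Phi[y]$ almost for free, since $(x'^{-1}y')(y'^{-1}v') = x'^{-1}v'$ is a one-line groupoid computation. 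The ingredients you do share with the paper are the canonical-representative lemma $(x,y) \equiv (\dom(x),x^{-1}y)$, proved with the same acting elements $(x^{-1},\ran(x))$ and $(x,\dom(x))$, and the witness $(\ran(z),\ran(z'))$ for order-preservation of the inverse map (the paper writes it as $(g \otimes h^{-1},\ran(h))$, which reduces to the same element). One small point you leave implicit: an isomorphism of ordered groupoids requires the inverse of your bijective functor to be a functor as well; this is automatic for groupoids, since functors commute with $\dom$ and $\ran$ and bijectivity then forces composability to be reflected, but it deserves a sentence. On balance, your organization localizes all the pseudoproduct manipulation in one monotonicity lemma, whereas the paper's direction spreads comparable computations across four separate verifications; the two proofs are of essentially equal length, and yours arguably makes clearer \emph{why} the theorem is true: the assignment $(x,y) \mapsto x^{-1}y$ is a $\preceq$-to-$\leq$ morphism that becomes invertible exactly upon passing to $\equiv$-classes.
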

\begin{proof}
Define $\alpha \colon \: G \rightarrow J(\R (G),R(G))$ by $\alpha (g) = [(\ran (g),g)]$.
We show first that $\alpha$ is a bijection.
Suppose that $\alpha (g) = \alpha (h)$.
Then $(\ran (g),g) \equiv (\ran (h),h)$.
Thus 
$(a,\ran (g)) \cdot (\ran (g),g) = (\ran (h),h)$
and
$(b,\ran (h)) \cdot (\ran (h),h)= (\ran (g),g)$
for some category elements $(a,\ran (g))$ and $(b,\ran (h))$.
Hence
$$a \otimes \ran (g) = \ran (h),\quad
b \otimes \ran (h) = \ran (g), \quad
a \otimes g = h,
\mbox{ and }
b \otimes h = g.$$
It follows that $a$ and $b$ are identities
and so $h \leq g$ and $g \leq h$, which gives $g = h$.
Thus $\alpha$ is injective.
To prove that $\alpha$ is surjective,
observe that if $[(x,y)]$ is an arbitrary element of $J(\R (G),R(G))$,
then $(x,y) \equiv (\dom (x), x^{-1}y)$ because
$$(x^{-1},\ran (x))\cdot (x,y) = (\dom (x),x^{-1}y)
\mbox{ and }
(x,\dom (x)) \cdot (\dom (x),x^{-1}y) = (x,y).$$

Next we show that $\alpha$ is a functor.
It is clear that identities map to identities.
Suppose that $gh$ is defined in $G$.
Now $\alpha (g) = [(\ran (g),g)]$ and $\alpha (h) = [(\ran (h),h)]$.
We have that
$\dom [(\ran (g),g)] = [(g,g)]$ and $\ran [(\ran (h),h)] = [(\ran (h), \ran (h))]$.
Now $(g,g) \equiv (\dom (g), \dom (g))$ because
$$(g^{-1},\dom (g)) \cdot (g,g) = (\dom (g),\dom (g))$$
and
$$(g,\dom (g)) \cdot (\dom (g),\dom (g)) = (g,g).$$
Thus $\alpha (g)\alpha (h)$ is also defined.
Now $(\ran (h),h) \equiv (g,gh)$ because
$$(g,\ran (h)) \cdot (\ran (h),h) = (g,gh)$$
and 
$$(g^{-1},\ran (g)) \cdot (g,gh) = (\ran (h),h).$$
Thus $\alpha (g) \alpha (h) = [(\ran (g),gh)] = \alpha (gh)$.
It follows that $\alpha$ is a functor.

Finally, we prove that $\alpha$ is an order isomorphism.
Suppose first that $g \leq h$ in $G$.
Then $g^{-1} \leq h^{-1}$ 
and
$(\dom (g)|h^{-1}) \leq h^{-1}$
and 
$\ran (\dom (g)|h^{-1}) = \dom (g) = \ran (g^{-1})$.
Thus $(\dom (g)|h^{-1}) = g^{-1}$.
It is now easy to check that
$(\ran (g),g) = (g \otimes h^{-1},\ran (h)) \cdot (\ran (h),h)$.
Thus $\alpha (g) \leq \alpha (h)$.
Now suppose that $\alpha (g) \leq \alpha (h)$.
Then $(\ran (g),g) = (a,\ran (h)) \cdot (\ran (h),h)$.
It follows that $a$ is an identity and that $g = a \otimes h$ and so $g \leq h$.
We have proved that $\alpha$ is an order isomorphism.
Hence $\alpha$ is an isomorphism of ordered groupoids.
\end{proof}

Theorem~4.4 tells us that ordered groupoids can be constructed from pairs $(C,G)$ satisfying some additional conditions.
First, we may assume that $C$ is right cancellative.
Second, $\pi \colon G \rightarrow C_{o}$ is a surjective map.
Third if $a \cdot x = b \cdot x$ then $a = b$;
we call this the {\em right cancellation condition}.
A pair satisfying these three additional condition is called an {\em affine system} and are the big sisters to the RP-systems of classical semigroup theory \cite{McA,Reilly}.
In an affine system, the equivalence relation $\equiv$ is determined by the isomorphisms in $C$;
for suppose that $y = a \cdot x$ and $x = b \cdot y$ then
$y = (ab) \cdot y$ and $x = (ba) \cdot x$ and so by the right cancellation condition
$a$ is invertible with inverse $b$.

\subsection{Morphisms between affine systems}

Let $(C,G)$ and $(D,H)$ be affine systems.
A {\em morphism} from $(C,G)$ to $(D,H)$ is a pair $\alpha = (\alpha_{1},\alpha_{2})$
where $\alpha_{1} \colon C \rightarrow D$ and $\alpha_{2} \colon G \rightarrow H$ are functors
such that if $a \cdot x$ is defined then $\alpha_{1} (a) \cdot \alpha_{2} (x)$ is defined and
$\alpha_{2} (a \cdot x) =  \alpha_{1} (a) \cdot \alpha_{2} (x)$. 

\begin{lemma} Let $\alpha \colon (C,G) \rightarrow (D,H)$ be a morphism of affine systems.
Then $\bar{\alpha} \colon J(C,G) \rightarrow J(D,H)$, 
defined by $\bar{\alpha}[x] = [\alpha_{2}(x)]$, 
is an ordered functor.
\end{lemma}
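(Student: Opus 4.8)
The plan is to check the three defining requirements for an ordered functor: that $\bar{\alpha}$ is well-defined on $\equiv$-classes, that it is a functor, and that it preserves the order. The single tool driving all three is the defining identity of a morphism, $\alpha_2(a \cdot x) = \alpha_1(a) \cdot \alpha_2(x)$, which turns any instance of the preorder $\preceq$ on $G$ into an instance of $\preceq$ on $H$.

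First I would dispatch well-definedness and order-preservation together, since both reduce to transporting $\preceq$. Recall that $x \preceq y$ in $G$ means $x = a \cdot y$ for some $a \in C$; applying the morphism gives $\alpha_2(x) = \alpha_1(a) \cdot \alpha_2(y)$, so $\alpha_2(x) \preceq \alpha_2(y)$ in $H$. Hence $x \preceq y$ implies $[\alpha_2(x)] \leq [\alpha_2(y)]$, which is order-preservation; and applying this to both $x \preceq y$ and $y \preceq x$ shows that $x \equiv y$ forces $\alpha_2(x) \equiv \alpha_2(y)$, so $\bar{\alpha}[x] = [\alpha_2(x)]$ does not depend on the chosen representative.

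Next comes functoriality. Since $\alpha_2$ is a functor it sends identities of $G$ to identities of $H$ and commutes with $\dom$ and $\ran$; combined with the definitions $\dom[x] = [\dom(x)]$ and $\ran[x] = [\ran(x)]$ this yields that $\bar{\alpha}$ carries identities to identities and satisfies $\bar{\alpha}(\dom[x]) = \dom(\bar{\alpha}[x])$ together with the analogous identity for $\ran$. For the partial product, suppose $[x][y]$ is defined in $J(C,G)$. By the construction of the product in the proof of Theorem~4.1 I may pick representatives $x' \in [x]$ and $y' \in [y]$ with $x'y'$ defined in $G$ and $[x][y] = [x'y']$. Because $\alpha_2$ is a functor, $\alpha_2(x')\alpha_2(y')$ is defined in $H$ and equals $\alpha_2(x'y')$. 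Since $\alpha_2(x')$ and $\alpha_2(y')$ represent $\bar{\alpha}[x]$ and $\bar{\alpha}[y]$ and possess a defined product, $\bar{\alpha}[x]\,\bar{\alpha}[y]$ is defined and equals $[\alpha_2(x'y')] = \bar{\alpha}([x][y])$.

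The only place demanding care, and hence the main obstacle, is the product step, because the multiplication on $J(D,H)$ is defined only up to a choice of representatives. What makes the argument go through is precisely that this multiplication is already known to be independent of that choice (established in the proof of Theorem~4.1, where axioms (A7) and (A8) were consumed): producing one legitimate pair $\alpha_2(x'),\alpha_2(y')$ with a defined product is therefore enough to pin down $\bar{\alpha}[x]\,\bar{\alpha}[y]$. No further appeal to (A7), (A8), or the right cancellation condition is required at this stage.
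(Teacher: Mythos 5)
Your proposal is correct and follows essentially the same route as the paper's proof: observe that the morphism identity transports $\preceq$ (giving well-definedness and order-preservation at once), note that $\alpha_2$ sends identities to identities, and handle the partial product by choosing representatives $x'\equiv x$, $y'\equiv y$ with $x'y'$ defined and pushing them through $\alpha_2$. Your closing remark—that one legitimate pair of representatives suffices because the product in $J(D,H)$ was already shown to be representative-independent in Theorem~4.1—is exactly the point the paper relies on implicitly.
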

\begin{proof} Observe that if $x \preceq y$ then $\alpha_{2} (x) \preceq \alpha_{2} (y)$.
Thus the function $\bar{\alpha}$ is well-defined.
Since $\alpha_{2}$ is a functor $\bar{\alpha}$ maps identities to identities.
Suppose that $[x][y]$ is defined in $J(C,G)$.
Then there exist $x' \equiv x$ and $y' \equiv y$ such that $x'y'$ is defined in $G$.
Thus $\alpha_{2}(x)\alpha_{2}(y)$ is defined in $H$.
We have that 
$$\bar{\alpha}([x][y]) = \bar{\alpha}[x'y'] = [\alpha_{2}(x'y')] = [\alpha_{2}(x')\alpha_{2}(y')] = [\alpha_{2}(x')][\alpha_{2}(y')]$$
which is just 
$\bar{\alpha}[x] \bar{\alpha}[y]$.
Thus $\bar{\alpha}$ is a functor and it is an ordered by our first observation.
\end{proof}

A {\em morphism} $\alpha = (\alpha_{1},\alpha_{2})$ from $(C,G)$ to $(D,H)$ is said to be an {\em equivalence} if 
the following three conditions hold:
\begin{description}
\item[{\rm (E1)}] $\alpha_{1}$ is an equivalence of categories.
\item[{\rm (E2)}] $\alpha_{2} (x) \preceq \alpha_{2}(y)$ implies $x \preceq y$.
\item[{\rm (E3)}] For each $y \in H$ there exists $x \in G$ such that $y \equiv \alpha_{2} (x)$.
\end{description}

\begin{lemma} Let $\alpha \colon (C,G) \rightarrow (D,H)$ be an equivalence of affine systems.
Then $\bar{\alpha} \colon J(C,G) \rightarrow J(D,H)$ is an isomorphism of ordered groupoids.
\end{lemma}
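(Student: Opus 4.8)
The plan is to combine Lemma~4.6 with the conditions (E2) and (E3). By Lemma~4.6 the map $\bar{\alpha}$ is already an ordered functor, so to promote it to an isomorphism of ordered groupoids it remains to check that $\bar{\alpha}$ is a bijection and that its inverse is again an ordered functor; the latter splits into showing that $\bar{\alpha}$ reflects the order and that the set-theoretic inverse is a functor.

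First I would establish bijectivity. Surjectivity is exactly what (E3) provides: given any $[y] \in J(D,H)$, condition (E3) yields an $x \in G$ with $y \equiv \alpha_{2}(x)$, so that $\bar{\alpha}[x] = [\alpha_{2}(x)] = [y]$. For injectivity, suppose $\bar{\alpha}[x] = \bar{\alpha}[y]$, that is $\alpha_{2}(x) \equiv \alpha_{2}(y)$. Then $\alpha_{2}(x) \preceq \alpha_{2}(y)$ and $\alpha_{2}(y) \preceq \alpha_{2}(x)$, whence by (E2) we obtain $x \preceq y$ and $y \preceq x$; thus $x \equiv y$ and $[x] = [y]$.

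Next I would show that $\bar{\alpha}$ is order-reflecting, which is again immediate from (E2): if $\bar{\alpha}[x] \leq \bar{\alpha}[y]$ then $\alpha_{2}(x) \preceq \alpha_{2}(y)$, so (E2) gives $x \preceq y$, that is $[x] \leq [y]$. Since $\bar{\alpha}$ is a bijection that both preserves and reflects the order, the inverse bijection is order-preserving. Finally, the inverse is automatically a functor: $\bar{\alpha}$ is a bijective functor between groupoids, hence injective on identities and therefore reflects composability of arrows, since $\dom \bar{\alpha}[x] = \ran \bar{\alpha}[y]$ forces $\bar{\alpha}(\dom[x]) = \bar{\alpha}(\ran[y])$ and so $\dom[x] = \ran[y]$; a routine check then shows $\bar{\alpha}^{-1}$ preserves identities and composition. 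Assembling these facts, $\bar{\alpha}$ is a bijective ordered functor whose inverse is an ordered functor, i.e.\ an isomorphism of ordered groupoids.

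The argument is essentially bookkeeping once Lemma~4.6 is available, and I do not anticipate a genuine obstacle; the one point demanding care is the functoriality of the inverse, where one must invoke that a bijective functor between groupoids reflects composability rather than assuming it. It is worth remarking that condition (E1) — that $\alpha_{1}$ be an equivalence of categories — does not seem to enter the bare isomorphism statement, which rests only on (E2), (E3), and the morphism structure; (E1) presumably earns its keep in governing the finer structure (the orbit condition and $\ast$-inductivity) preserved by equivalences of affine systems.
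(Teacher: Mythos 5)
Your proof is correct and takes essentially the same approach as the paper's: (E3) gives surjectivity, (E2) gives both injectivity and order-reflection, and the inverse is then an ordered functor --- the paper states this in three lines, leaving implicit the point you rightly spell out, that a bijective functor between groupoids reflects composability and so has a functorial inverse. One small citation note: the ordered-functor input is the \emph{preceding} lemma on morphisms of affine systems (Lemma~4.5), since Lemma~4.6 is the statement being proved.
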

\begin{proof} 
(E2) implies that $\bar{\alpha}$ is injective and (E3) that $\bar{\alpha}$ is surjective.
The condition (E2) also implies that $\bar{\alpha}$ is an order isomorphism.
\end{proof}

\subsection{Special cases and examples}

We sketch out a few special cases of the above theory and touch on some interesting examples.

We show first that the theory of ordered groupoids with a maximum identity, described in Section~3.4, is a special case of this new theory.
The proof of the following is immediate from the definitions.
It is not surprising given the motivation described at the beginning of Section~4.1.

\begin{proposition} Let $C$ be a right cancellative category with weak initial identity 1.
Put $X = C1$ and $X \ast X = \{(a,b) \in X \times X \colon \ran (a) = \ran (b) \}$.
Define an action of $C$ on $X \ast X$ by $a \cdot (x,y) = (ax, ay)$ if $\dom (a) = \ran (x)$.
Then $(C,X \ast X)$ is an affine system and $\mathbf{G}^{r}(C)$, the ordered groupoid constructed from
the right rooted category $C$ is isomorphic to $J(C,X \ast X)$.
\end{proposition}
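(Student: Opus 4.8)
The plan is to split the proof into two essentially independent tasks. First, I would check that the pair $(C, X \ast X)$ really is an affine system; second, I would observe that $\G^{r}(C)$ and $J(C, X \ast X)$ are two packagings of the same data, so that the map $[a,b] \mapsto [(a,b)]$ is the desired isomorphism. The first task is the substantive one, though it is still routine; the second is, as the statement suggests, almost immediate once the definitions are aligned.

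For the affine-system verification I would first note that $X \ast X$ is nothing but the principal groupoid of the equivalence relation ``$\ran(a) = \ran(b)$'' on the set $X = C1$, so (A7) holds by construction, with $\dom(a,b) = (b,b)$, $\ran(a,b) = (a,a)$ and product $(a,b)(b,e) = (a,e)$. The action axioms (A1)--(A6) for $a \cdot (x,y) = (ax,ay)$ are then direct consequences of the identity, associativity and functoriality laws in $C$ together with the diagonal form of the action; for instance (A4) follows since $\dom(a\cdot(x,y)) = (ay,ay) = a \cdot \dom(x,y)$, and (A6) from $(ap,aq)(aq,ar) = (ap,ar)$. The two points needing a word are (A8) and the extra conditions: writing $x = (p,q)$, the equalities $\dom(a\cdot x) = \dom(b\cdot x)$ and $\ran(a\cdot x) = \ran(b\cdot x)$ say, respectively, $aq = bq$ and $ap = bp$, and each is equivalent to $a = b$ by \emph{right} cancellativity of $C$, so (A8) and the right cancellation condition both hold. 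Finally $\pi(a,b) = \ran(a)$ is surjective onto $C_{o}$ precisely because $1$ is a weak initial identity: any object $e$ receives an arrow $a \colon 1 \to e$, whence $(a,a) \in X \ast X$ and $\pi(a,a) = e$.

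With $(C, X \ast X)$ established as an affine system, I would turn to the isomorphism. The point is that the underlying set of pairs is literally the same in both constructions: the set $U = \{(a,b) : \dom(a) = \dom(b) = 1,\ \ran(a) = \ran(b)\}$ used to build $\G^{r}(C)$ is exactly $X \ast X$, and $\G^{r}(C)$ is its quotient by the relation $\sim$ given by isomorphisms $u$ acting diagonally, $(a,b) \sim (ua,ub)$. Because $(C, X \ast X)$ is an affine system, the equivalence $\equiv$ defining $J(C, X \ast X)$ is \emph{determined by the isomorphisms of $C$}, i.e.\ $(a,b) \equiv (a',b')$ iff $(a,b) = c \cdot (a',b')$ for an invertible $c$; this is exactly $\sim$. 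Hence $[a,b] \mapsto [(a,b)]$ is a well-defined bijection. It then remains to match the operations, and here the formulas coincide on the nose: $\dom[a,b] = [b,b]$ and $\ran[a,b] = [a,a]$ correspond to $[\dom(a,b)]$ and $[\ran(a,b)]$; the inverse $[b,a]$ to $[(b,a)]$; the order $[a,b] \leq [c,d] \Leftrightarrow (a,b) = p(c,d)$ to $(a,b) \preceq (c,d)$; and the partial products agree, since composing $(a,b)$ with $(c,d)$ after using an isomorphism $p$ with $b = pc$ to align the middle term yields $[a,pd]$ in either structure.

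The only real work, then, is the affine-system check, and within it the one genuinely load-bearing observation is that right cancellativity of $C$ simultaneously delivers (A8) and forces the witnesses of $\equiv$ to be isomorphisms, collapsing $\equiv$ onto $\sim$; everything after that is the bookkeeping of confirming that the dual conventions defining $\G^{r}(C)$ reproduce the groupoid and order structure produced by the general $J(C,G)$ construction.
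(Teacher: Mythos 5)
Your proof is correct and takes essentially the same approach as the paper: the paper declares this result ``immediate from the definitions,'' pointing back to the motivation at the start of Section~4.1, and your two-step verification --- checking (A1)--(A8) plus the affine-system conditions, then identifying $\equiv$ with the isomorphism relation $\sim$ --- is exactly that argument written out in full. In particular, your load-bearing observation that right cancellativity forces mutual witnesses of $\preceq$ to be isomorphisms, collapsing $\equiv$ onto $\sim$, is precisely the point the paper makes both in the Section~4.1 motivation and in the remark closing Section~4.2.
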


An affine system $(C,G)$ is said to be {\em cyclic} if the following two conditions hold:
\begin{description}

\item[{\rm (C1)}] There exists $x_{0} \in G_{o}$ such that $G_{o} = C \cdot x_{0}$.

\item[{\rm (C2)}] If $a,b \in C$ such that $\ran (a) = \ran (b)$ and $\dom (a) = \dom (b) = \pi (x_{0})$ then
there exists $g \in G$ such that $\ran (g) = a \cdot x_{0}$ and $\dom (g) = b \cdot x_{0}$. 

\end{description}

Ordered groupoids with maximum identity correspond to cyclic affine systems.

Our second special case deals with the situation where our ordered groupoid is connected.
We consider an affine system $(C,G)$ where $C$ is a right cancellative monoid and $G = X \times X$ is a universal principal groupoid.
It follows that in fact we have a left monoid action of $C$ on $X$ which satisfies the right cancellation condition.
We call $(C,X)$ an {\em affine monoid system}.

\begin{proposition} 
Affine monoid systems describe connected ordered groupoids.
\end{proposition}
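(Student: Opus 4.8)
The plan is to read the assertion as a two-way correspondence and to treat each direction separately. For one direction, let $(C,X)$ be an affine monoid system, so that $C$ is a right cancellative monoid acting on the left of the set $X$ and $X\times X$ is the universal principal groupoid carrying the diagonal action $a\cdot(x,y)=(a\cdot x,a\cdot y)$. Then $(C,X\times X)$ is an affine system satisfying (A7) (since $X\times X$ is universal) and (A8) (which reduces to the right cancellation condition), so by Theorem~4.2 the quotient $J(C,X\times X)$ is an ordered groupoid. To see it is connected, recall that its identities are the classes $[(x,x)]$ with $x\in X$; for any two of them the class $[(x,y)]$ satisfies $\ran[(x,y)]=[(x,x)]$ and $\dom[(x,y)]=[(y,y)]$, and so supplies an arrow joining $[(y,y)]$ to $[(x,x)]$. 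Hence any two identities lie in the same connected component, and $J(C,X\times X)$ is connected.

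For the converse I would start from an arbitrary connected ordered groupoid $G$ and use Theorem~4.4, which gives $G\cong J(\R(G),R(G))$. The key observation is that connectedness of $G$ is precisely the statement that all objects of the category $\R(G)$ are isomorphic: an isomorphism between the objects $e$ and $f$ of $\R(G)$ amounts to an element $u\in G$ with $\dom(u)=e$ and $\ran(u)=f$, and such a $u$ exists for every pair $e,f\in G_{o}$ exactly when $G$ is connected. Fixing a base object $e_{0}\in G_{o}$, it follows that $\R(G)$ is equivalent, as a category, to the monoid $C=\mathrm{End}_{\R(G)}(e_{0})$, which inherits right cancellativity.

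Next I would cut the principal groupoid $R(G)$ down to a single fibre. Writing $X=\{x\in G\colon \ran(x)=e_{0}\}$, the fibre $\pi^{-1}(e_{0})=\{(x,y)\colon \ran(x)=\ran(y)=e_{0}\}$ is exactly the universal principal groupoid $X\times X$. Since $\dom(z)\leq e_{0}=\ran(x)$ for $z\in C$ and $x\in X$, axiom (A2), verified for $(\R(G),R(G))$ in Proposition~4.3, forces $\ran(z\otimes x)=\ran(z)=e_{0}$, so this fibre is closed under the $C$-action and $(C,X)$ is an affine monoid system. I would then check that the inclusion $(C,X\times X)\hookrightarrow(\R(G),R(G))$ is an equivalence of affine systems: condition (E1) is the categorical equivalence $C\simeq\R(G)$ just noted; for (E3), every $(x,y)\in R(G)$ with common range $e$ can be carried into the fibre over $e_{0}$ by acting with the invertible element $(u,e)$ arising from a connecting isomorphism $u\colon e\to e_{0}$ in $G$, whence $(x,y)\equiv(u\otimes x,u\otimes y)$; and (E2) holds because any $(g,e)$ realising $\preceq$ between two elements of the fibre must, by (A2), satisfy $e=e_{0}$ and $\ran(g)=e_{0}$, so that $(g,e_{0})$ already lies in $C$. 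By Lemma~4.6 the induced map $J(C,X\times X)\to J(\R(G),R(G))$ is then an isomorphism of ordered groupoids, giving $G\cong J(C,X\times X)$.

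The routine part is the first direction, together with the verification that $(C,X)$ genuinely is an affine monoid system (right cancellation of the restricted action descends from that of $(\R(G),R(G))$). The main obstacle is the converse, and within it the three equivalence conditions (E1)--(E3): they all hinge on converting the hypothesis that $G$ is connected into the two facts that every object of $\R(G)$ is isomorphic to $e_{0}$ and that the invertibles of $\R(G)$ act transitively on the connected components of $R(G)$. Getting (E3) right---moving an arbitrary element of $R(G)$ into the chosen fibre by an invertible action while controlling ranges through (A2)---is the delicate step.
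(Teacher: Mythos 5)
Your proof is correct, and there is in fact nothing in the paper to compare it against: Proposition~4.8 is stated without proof, in the sketch-like subsection on special cases, so your argument fills a genuine gap rather than shadowing an existing one. Your reading of ``describe'' as a two-way correspondence is the intended one, and both directions check out. The forward direction is routine, as you say: (A7) is trivial, (A8) follows from the right cancellation condition (note it is an implication, not a ``reduction''---(A8) can hold for actions, e.g.\ trivial ones, that are not right cancellative), and the arrow $[(x,y)]$ joining $[(y,y)]$ to $[(x,x)]$ gives connectedness of $J(C,X\times X)$. Your converse assembles the paper's own machinery in the natural way: Theorem~4.4, the observation that connectedness of $G$ says precisely that all objects of $\R (G)$ are isomorphic (the invertible elements of $\R (G)$ being the pairs $(u,\dom (u))$, by Propositions~3.6 and~3.7), closure of the fibre $X\times X=\pi^{-1}((e_{0},e_{0}))$ under the action of the endomorphism monoid $C$, and Lemma~4.6 applied to the inclusion. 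I verified (E1)--(E3) as you state them and they hold; the only imprecision is in (E2), where the equality $e=e_{0}$ comes from the definedness condition $\dom (g,e)=\pi (x',y')$ in $C\ast G$ rather than from (A2), which is what yields $\ran (g)=e_{0}$. One step deserves a line that you (and the paper) pass over silently: that $(\R (G),R(G))$ itself satisfies the right cancellation condition, since Proposition~4.3 verifies only (A1)--(A8) and your affine monoid system inherits its cancellation from this. It is true and quick: if $g\otimes x=h\otimes x$ with $\dom (g),\dom (h)\leq \ran (x)$, then $g\,(\dom (g)\,|\,x)=h\,(\dom (h)\,|\,x)$, so the two corestrictions of $x$ have equal domains, hence coincide by the uniqueness clause of (OG3), and $g=h$ follows by cancellation in the groupoid $G$.
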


Affine systems lead to a natural description of arbitrary inverse semigroups with zero.
Let $S$ be an inverse semigroup with zero and let $\mathbf{R} = \mathbf{R}(S^{\ast})$,
the right cancellative category associated with the non-zero elements of $S$.
The principal groupoid $R = R(S^{\ast})$ consists of those pairs $(s,t)$ such that
$s$ and $t$ are both non-zero and $ss^{-1} = tt^{-1}$.
By Theorem~4.4, the ordered groupoid $S^{\ast}$ is isomorphic to $J(\mathbf{R},R)$.
Thus the inverse semigroup $S$ is isomorphic to $J(\mathbf{R},R)^{0}$ equipped with the pseudoproduct.
We may summarize these results as follows.

\begin{theorem} Every inverse semigroup with zero $S$ is determined upto isomorphism by three ingredients:
the right cancellative category $\mathbf{R}(S^{\ast})$, Green's $\mathcal{R}$-relation on the non-zero elements, 
and the action of the category on the groupoid determined by Green's $\mathcal{R}$-relation. 
\end{theorem}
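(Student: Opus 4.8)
This theorem is essentially a repackaging of the reconstruction carried out in the paragraph immediately preceding it, so the plan is to assemble the relevant results of Sections~2 and~4 rather than to prove something new from scratch. The only genuinely delicate point is the treatment of the zero, and in particular the passage from an isomorphism of ordered groupoids to an isomorphism of inverse semigroups with zero; I would organize the argument so that everything else reduces to citing Proposition~4.3 and Theorem~4.4.

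First I would record the starting observation. Since $S$ is an inverse semigroup with zero, the set $S^{\ast}$ of non-zero elements, equipped with the restricted product and the natural partial order inherited from $S$, is a $\ast$-inductive ordered groupoid, and $S$ is recovered from it as $(S^{\ast})^{0}$. Here I would invoke Proposition~2.9: the pseudoproduct on $S^{\ast}$ agrees with the restricted product, so adjoining a zero and declaring every undefined pseudoproduct to equal $0$ returns precisely the original multiplication of $S$. This is exactly the construction described after the Ehresmann--Schein--Nambooripad theorem, read in the direction ``every inverse semigroup with zero arises this way.''

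Next I would apply the machinery of Section~4 to the ordered groupoid $G = S^{\ast}$. By Proposition~4.3 the pair $(\R (S^{\ast}),R(S^{\ast}))$ satisfies the axioms (A1)--(A8), hence is an affine system, and by Theorem~4.4 there is an isomorphism of ordered groupoids
$$\alpha \colon \: J(\R (S^{\ast}),R(S^{\ast})) \longrightarrow S^{\ast}.$$
At this stage I would identify the three ingredients with the data named in the statement: $\R (S^{\ast})$ is the right cancellative category of the non-zero elements; the principal groupoid $R(S^{\ast})$ consists of the pairs $(s,t)$ with $\ran (s) = \ran (t)$ in the groupoid $S^{\ast}$, that is, with $ss^{-1} = tt^{-1}$, which is exactly Green's $\mathcal{R}$-relation on the non-zero elements; and the action is the one defined in Section~4.2.

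Finally I would transport the zero across $\alpha$. Because $S^{\ast}$ is $\ast$-inductive and $\alpha$ is an isomorphism of ordered groupoids, $J(\R (S^{\ast}),R(S^{\ast}))$ is $\ast$-inductive as well, and an isomorphism of $\ast$-inductive ordered groupoids preserves meets of identities wherever they exist. Hence $\alpha$ preserves the pseudoproduct wherever it is defined and, crucially, preserves its exact domain of definition; so $\alpha$ extends uniquely to a semigroup isomorphism $J(\R (S^{\ast}),R(S^{\ast}))^{0} \cong (S^{\ast})^{0} = S$ once zeros are adjoined and undefined pseudoproducts are sent to $0$ on both sides. This exhibits $S$, up to isomorphism, as a function of the triple alone. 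The main obstacle is precisely this last step: verifying that ``the product is $0$ in $S$'' corresponds exactly to ``the pseudoproduct is undefined in $J(\R (S^{\ast}),R(S^{\ast}))$,'' so that the two adjoined zeros match and the extension of $\alpha$ is forced; once that is checked, the theorem is just bookkeeping on top of Proposition~4.3 and Theorem~4.4.
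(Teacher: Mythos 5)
Your proposal is correct and follows essentially the same route as the paper: the paper's proof is precisely the paragraph preceding the theorem, which applies Theorem~4.4 to the $\ast$-inductive ordered groupoid $S^{\ast}$ to obtain $S^{\ast} \cong J(\mathbf{R}(S^{\ast}),R(S^{\ast}))$ and then adjoins a zero to recover $S$ via the pseudoproduct. Your final step merely makes explicit what the paper leaves implicit --- that an isomorphism of ordered groupoids preserves the existence and value of meets of identities, hence preserves the pseudoproduct and its exact domain of definition, so the adjoined zeros match up --- which is a worthwhile but not substantively different elaboration.
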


One obvious question is how categories acting on principal groupoids arise.
We now describe one example.
Let $(C,X)$ be a pair consisting of a category $C$ acting on a set $X$
where we denote by $\pi \colon \: X \rightarrow C_{o}$ the function used in
defining the action.
Define the relation $\mathcal{R}^{\ast}$ on the set $X$ as follows:
$x \,\mathcal{R}^{\ast}\, y$ iff $\pi (x) = \pi (y)$ and for all $a,b \in C$
we have that, when defined,
$$a \cdot x = b \cdot x \Leftrightarrow a \cdot y = b \cdot y.$$
Observe that $\mathcal{R}^{\ast}$ is an equivalence relation on the set $X$.
In addition, $x \,\mathcal{R}^{\ast}\, y$ implies that
$c \cdot x \,\mathcal{R}^{\ast}\, c \cdot y$
for all $c \in C$ where $c \cdot x$ and $c \cdot y$ are defined.
Consequently, we get a principal groupoid 
$$G(C,X) = \{(x,y) \colon \: x \,\mathcal{R}^{\ast}\, y \}.$$
Define $\pi' \colon \: G(C,X) \rightarrow C_{o}$ by
$\pi' (x,y) = \pi (x)$, 
and define an action of $C$ on $G(C,X)$ by 
$a \cdot (x,y) = (a \cdot x, a \cdot y)$
when ${\bf d}(a) = \pi' (x,y)$.
It is easy to check that axioms (A1)--(A8) hold.
We may therefore construct an ordered groupoid from the pair
$(C,G(C,X))$.
This is identical to the ordered groupoid constructed in
\cite{Lawson99} directly from the pair $(C,X)$.

Constructing inverse semigroups from rooted categories and affine systems has practical applications.
One can try to relate the categorical properties of the rooted category to the algebraic properties of the inverse monoid \cite{Leech98}.
It also leads to the perspective that inverse semigroup theory can be viewed not just as the abstract theory
of partial bijections but also the abstract theory of bijections between quotients \cite{F,FL}.
The theory restricted to a class of $0$-bisimple inverse monoids allows self-similar group actions \cite{Nek} to be
described in terms of a class of inverse semigroups \cite{Lawson08}.
Graph inverse semigroups can be constructed from free categories \cite{Lawson99}.
Such inverse semigroups are important in the theory of $C^{\ast}$-algebras \cite{Raeburn}.
Finally, general affine systems \cite{Lawson06} can be used to better understand the nature of the inverse semigroups that Dehornoy first constructed \cite{Dehornoy93}.

\section{Notes on Chapter 2}

The theory described in Section~2 originates in Ehresmann's work \cite{Ehresmann} and was first developed
within inverse semigroup by Boris Schein \cite{Schein} and then successfully generalized 
to arbitrary regular semigroups by Nambooripad \cite{Nam}.
The maximum enlargement theorem comes under scrutiny in \cite{Steinberg} and \cite{Lawson02a}.
I wrote extensively about the ordered groupoid approach to inverse semigroup theory in my book \cite{Lawson98} so I will say no more here.

The origins of Section~3 go right back to the beginnings of inverse semigroup theory.
Clifford \cite{Clifford} showed how to describe inverse monoids in terms of left cancellative monoids.
The realization that this could be generalized to arbitrary inverse monoids by replacing left cancellative monoids by 
left cancellative categories is due to Leech \cite{Leech} who gives duw credit to Logananthan's trail-blazing paper \cite{Log}
on how category theory can be applied to semigroup theory;
Loganathan showed that the cohomology of an inverse semigroup,
introduced by Lausch \cite{Lausch}, was the same as the cohomology of the associated left rooted category.

The extension of Clifford's original construction to $0$-bisimple inverse monoids is due to Reilly \cite{Reilly} and McAlister \cite{McA}.
The routine extension of Leech's construction to inverse monoids with zero may be found in \cite{Lawson99a}.
Note that the fact that this paper is labelled `I' and that the references refer to two further papers by the same author labelled `II' and `III' can safely be ignored.
The more general results on ordered groupoids were proved in \cite{Lawson04} and \cite{Lawson04b}.
In Section~2.4, I described how ordered groupoids could be used to find a structural description of $E$-unitary inverse semigroups.
In \cite{James}, the authors show how the left rooted category associated with an $E$-unitary
inverse monoid has a groupoid of fractions from which a proof of the $P$-theorem can also be deduced.
There are almost as many proofs of the $P$-theorem as there are semigroup theorists and a survey of some of them can be found in \cite{Lawson07}.
Chapter~X of Petrich \cite{Petrich} contains much more on the theory of bisimple inverse monoids.

Section~4 arises as unfinished business from Section~3.
We know how to construct inverse monoids with zero from left rooted categories but the question remains of how to deal with the semigroup case.
McAlister \cite{McA} and Reilly \cite{Reilly} discovered how to deal with bsimple inverse smeigroups and $0$-bisimple inverse semigroups
by using what were called {\em RP-systems} and {\em generalized RP-systems}.
As a result of reading a paper by Girard on linear logic \cite{Girard},
I was led to the construction described in \cite{Lawson99},
which shows how inverse semigroups can be constructed from 
categories acting on {\em sets}.
Claas R\"over pointed out to me that the paper by Dehornoy \cite{Dehornoy93}.
Dehornoy constructs an inverse semigroup from any variety,
in the sense of universal algebra,
that is described by equations which are balanced,
meaning that the same variables occur on either side of the equation.
This construction was clearly related to the construction in \cite{Lawson99}
and an analysis of the connections between the two that led to \cite{Lawson05}
and the {\em affine systems} described in this section.
This raises the obvious question: what have inverse semigroups got to do with linear logic?


\end{document}